\let\oldref\ref
\renewcommand{\ref}[1]{(\oldref{#1})}
\newcommand{\N}{\mathbb{N}}
\newcommand{\Z}{\mathbb{Z}}
\newcommand{\C}{\mathbb{C}}
\newcommand{\R}{\mathbb{R}}
\newcommand{\Q}{\mathbb{Q}}
\newcommand{\F}{\mathbb{F}}
\newcommand{\K}{\mathbb{K}}
\newcommand{\eps}{\varepsilon}
\newcommand{\llparen}{(\!(}
\newcommand{\rrparen}{)\!)}
\renewcommand{\O}{\mathcal{O}}
\renewcommand{\cal}{\mathcal}
\newcommand{\bb}{\mathbb}
\renewcommand{\bf}{\mathbf}
\renewcommand{\frak}{\mathfrak}
\newtheorem{theorem}{Theorem}[section]
\newtheorem{lemma}[theorem]{Lemma}
\theoremstyle{definition}
\newtheorem{definition}[theorem]{Definition}
\newtheorem{proposition}[theorem]{Proposition}
\theoremstyle{remark}
\newtheorem{remark}[theorem]{Remark}
\numberwithin{equation}{section}
\title{High-low analysis and small cap decoupling over non-Archimedean local fields}
\author{Ben Johnsrude}
\begin{document}

	\maketitle
	
	\begin{abstract}
		We prove a small cap decoupling theorem for the parabola over a general non-Archimedean local field for which $2\neq 0$. We obtain polylogarithmic dependence on the scale parameter $R$ and polynomial dependence in the residue prime, except for the prime 2 for which the polynomial depends on degree. Our constants are fully explicit.
	\end{abstract}
	
	\section{Introduction}
	
	In this note, we prove that the small cap decoupling theorem for the parabola may be extended to non-Archimedean local fields $\K$ of characteristic different from $2$. We do so by first adapting the ``high-amplitude wave envelope estimate'' of \cite{GM1}. In addition to recovering the desired power law in the scale parameter, we also obtain a fully explicit subpolynomial factor whose scale dependence is of the form $(\log R)^{O(1)}$ with $O(1)$ explicit and not too large, and whose dependence on $\K$ is polynomial in the order of the residue field.

    We recall the standard formalism of small cap decouplings, adapted to the non-Archimedean context. Let $R>1$ be in the range of $|\cdot|=|\cdot|_\K$ (the absolute value on $\K$, normalized to be the modular function), and write $\cal{N}_{\R^{-1}}(\bb{P}^1)$ for the set
    \begin{equation*}
        \big\{(x,y)\in\K^2:|x|\leq 1,\,|y-x^2|\leq R^{-1}\big\}.
    \end{equation*}
    Here we write $\bb{P}^1$ for the truncated parabola $\{(x,x^2)\in\K^2:|x|\leq 1\}$. If $\cal{O}=\{x\in\K:|x|\leq 1\}$ is the closed unit ball, and $\beta\in[\frac{1}{2},1]$ is such that $R^\beta$ also belongs to the range of $|\cdot|_\K$, then we write $\cal{P}(\cal{O},R^{-\beta})$ for the partition of $\cal{O}$ into closed balls of radius $R^{-\beta}$, and for each $I\in\cal{P}(\cal{O},R^{-\beta})$ we write $\gamma_I$ for the set
    \begin{equation*}
        \big\{(x,y)\in\K^2: x\in I,\,|y-x^2|\leq R^{-1}\big\}.
    \end{equation*}
    We will write
    \begin{equation*}
        \Gamma_\beta(R^{-1})=\big\{\gamma_I:I\in\cal{P}(\cal{O},R^{-\beta})\big\};
    \end{equation*}
    thus, $\Gamma_\beta(R^{-1})$ is the partition of $\cal{N}_{R^{-1}}(\bb{P}^1)$ into small caps of dimensions $R^{-\beta}\times R^{-1}$.

    For $p,q\geq 1$ and $R,\beta$ as above, we will write  $D_{p,q}^\K(R;\beta)$ for the infimal constant such that, for any Schwartz--Bruhat function\footnote{i.e. a finite linear combination of indicators of metric balls.} $f:\K^2\to\C$ with Fourier support contained in $\cal{N}_{R^{-1}}(\bb{P}^1)$, we have
    \begin{equation*}
        \|f\|_{L^r(\K^2)}^r\leq D_{r,q}^\K(R;\beta)\left(\sum_{\gamma\in\Gamma_\beta(R^{-1})}\|f_\gamma\|_{L^r(\K^2)}^q\right)^{r/q}.
    \end{equation*}
    Here and elsewhere we will write $f_\gamma$ for the Fourier projection of $f$ onto $\gamma$.

    Our primary goal will be to show the following.
	\begin{theorem}[Small cap decoupling over $\K$]\label{smallcap} Let $r,q\geq 1$ satisfy $\frac{3}{r}+\frac{1}{q}\leq 1$, $R\geq \bf{p}^{32}$, and $\beta\in[\frac{1}{2},1]$. Then the small cap decoupling constant satisfies
		\begin{equation}\label{smd}
			D_{r,q}^\K(R;\beta)\leq 2\frac{\bf q^{-3}\bf{p}^{10}}{(\log\bf p)^{11}}(\log R)^{17+6\beta^{-1}}\Big(R^{\beta(r-\frac{r}{q}-1)-1}+R^{\beta(\frac{r}{2}-\frac{r}{q})}\Big).
		\end{equation}
        Here $\bf q=p^f=\#\bf k$ is the cardinality of the residue field of $\K$, and $\bf{p}=\bf{p}_\K$ is defined as:
        \begin{equation}\label{eq:p_def}
            \bf{p}=\begin{cases}
               \bf q & \K\text{ extends }\Q_p\text{ or }\F_p\llparen t\rrparen,\,p>2\,,\\
               2^d & \K\text{ extends }\Q_2,\,[\K:\Q_2]=d.
            \end{cases}
        \end{equation}
		
	\end{theorem}

    Note in particular that, when $\mathrm{char}(\bf k)\neq 2$, the loss in the size $\bf q=\bf p$ of the residue field is reduced to $\frac{\bf p^7}{(\log\bf p)^{11}}$. The actual inequality we obtain in the proof of Theorem \ref{smallcap} in Section \ref{smallcapproof} is slightly more refined; here we have stated a simplified version.

    The study of non-Archimedean decouplings was initiated in \cite{GLY}, where a bilinear variant of an inequality of the form $D_{r,2}^{\Q_q}(R;\frac{1}{2})\lesssim_{r,\eps}(\log R)^{2r+\eps}$ was established for $q>2$ to achieve good discrete restriction estimates for the parabola. It was continued in \cite{li2022introduction}, where Li observed many of the features of non-Archimedean analysis that made it particularly appropriate for the setting of decoupling. In \cite{johnsrude2024restricted}, the current author and Lin generalized the decoupling theorem for the moment curve to the $q$-adic setting for each $q>n$ (with $n$ the ambient dimension); the result there may be extended to any local field of characteristic either $0$ or greater than $n$.

 Aside from decoupling, there has been a recent flurry of non-Archimedean Fourier and harmonic analysis in general; see e.g. \cite{HW23,wright2020h,FW,krause2022polynomial,hickman2018abstract,hickman2018fourier,arsovski2021p,salvatore2023kakeya} for a short sampling.  

    Over the real numbers, small cap decouplings were introduced in \cite{DGW}, and the estimate $D_{r,r}^\R(R;\beta)\lesssim_\eps R^{\beta(\frac{r}{2}-1)+\eps}$ was proved (with the natural interpretation of $D_{r,q}^\R$). The work of \cite{FGM} established the real version of the estimate we will show, which is a superlevel set estimate implying sharp bounds on the $D_{r,q}^{\square}(R;\beta)$ in the regime $\frac{1}{q}+\frac{3}{r}\leq 1$. Substantial work has also generalized the idea of small cap decouplings to other manifolds, such as that contained in \cite{guth2022small} and \cite{guth2023small}.

    Theorem \ref{smallcap} will proved by the following auxiliary estimate.

	\begin{theorem}[Wave envelope estimate]\label{squarefunction} Let $f:\K^2\to\C$ be Schwartz--Bruhat with Fourier support in $\mathcal{N}_{R^{-1}}(\mathbb{P}^1)$. Then, for any $\alpha>0$,
		\begin{equation*}
			\alpha^4\mu\big(\{x:|f(x)|>\alpha\}\big)\leq \frac{1}{50}\bf q^{-3}\bf{p}^{10}(\log_{\bf p} R)^{10}\sum_{\substack{s\in\bf{p}^\Z\\R^{-1/2}\leq s\leq 1}}\sum_{\substack{\tau\\\mathrm{diam}(\tau)=s}}\sum_{U\in\mathcal{G}_\tau}\mu(U)\left(\fint_U\sum_{\theta\subseteq\tau}|f_\theta|^2\right)^2.
		\end{equation*}
	\end{theorem}
	
	Here we use the following notation: each $U\in\cal{G}_{\tau}$ is a rectangle of dimensions $R\times sR$, with long edge in the direction of the normal vector to $\mathbb{P}^1$ at the center of $\tau$, centered at $0$; the set $\mathcal{G}_\tau$ is the subset of the standard tiling of $\K^2$ by such rectangles for which the following holds:
	\begin{equation}\label{superest}
		\frac{e^2}{2}\frac{(\log R)^2}{(\log\bf{p})^2}\fint_U\sum_{\theta\subseteq\tau}|f_{\theta}|^2\geq\frac{\alpha^2}{(\#\tau)^2}.
	\end{equation}
	Here $\#\tau$ denotes the number of $\tau$ of a particular length for which $f_\tau\not\equiv 0$. We also write $\theta$ for a generic cap on $\bb{P}^1$ of dimensions $R^{-1/2}\times R^{-1}$. The choice of $\cal G_\tau$ in subsection \ref{broad_sub} is more technical, but (by the ``pruning inequalities'') will include strictly fewer envelopes $U$ than in the statement of Theorem \ref{squarefunction}, hence is sufficient.
 
 In \cite{GM1}, these wave envelope estimates were refined to include only those envelopes corresponding to ``high-amplitude'' components of the various square functions. The latter paper demonstrated that the wave envelope estimate could also be used to derive the small cap results of \cite{FGM}. Our argument closely follows our earlier paper \cite{johnsrude2023small}, which in turn was an adaptation of the method in \cite{GM1}.

 We briefly mention the classification of local fields, to supply examples of the fields we will be working over. Any nondiscrete locally compact topological field $\K$ whose topology is induced by an absolute value (i.e. a multiplicative norm), is necessarily one of the following:
 \begin{enumerate}[label=(\alph*)]
     \item $\K=\R,\C$ (i.e. the Archimedean cases);
     \item $\K=\Q_p$, for some prime $p$, or a finite extension thereof; or
     \item $\K=\F_{p^n}\llparen t\rrparen$, for some prime $p$ and natural $n$.
 \end{enumerate}
 We will restrict our attention to cases (b), (c), i.e. the non-Archimedean local fields of characteristic $0$, resp. $p>2$. The reader will generally benefit from imagining the cases $\Q_p,\F_p\llparen t\rrparen$ with $p>2$.

    We also mention the utility in tracking dependence on $\bf{p}$ in the above theorems. Over the reals, Fourier-analytic methods for counting solutions to Diophantine equations frequently entail a subpolynomial loss in the diameter of the variable set. For instance, the Bourgain--Demeter--Guth resolution \cite{bourgain2016proof} of the main conjecture of Vinogradov's mean value theorem shows the particular result
    \begin{equation*}
        J_{s,k}(\cal{A})\lesssim_\eps\mathrm{diam}(\cal{A})^\eps(A^s+A^{2s-\frac{1}{2}k(k+1)}),
    \end{equation*}
    whenever $\cal{A}\subseteq\Z$ has $\#\cal{A}=A$. Here we use the usual notation of
    \begin{equation*}
        J_{s,k}(\cal{A})=\#\Big\{(\bf{n},\bf{m})\in\cal{A}^{2s}:\sum_{\iota=1}^sn_\iota^j-m_\iota^j=0,\,\forall 1\leq j\leq k\Big\},
    \end{equation*}
    for each $s,k\in\N$. If we instead use the $p$-adic decoupling theorem for the moment curve (Theorem 6.1 of \cite{johnsrude2024restricted}), we obtain the alternate estimate
    \begin{equation*}
        J_{s,k}(\cal{A})\lesssim_{p,\eps}\delta_p(\cal{A})^{-\eps}(A^s+A^{2s-\frac{1}{2}k(k+1)}).
    \end{equation*}
    Here $\delta_p(\cal{A})=\min\{|n-m|_p:n\neq m\in\cal{A}\}$, where $|\cdot|_p$ is the usual $p$-adic norm. In particular, if $p$ does not divide any of the differences $n-m$ (say, if $p>\max(|n|:n\in\cal{A})$), then the subpolynomial factor trivializes. However, there are corresponding losses in the choice of prime. Thus, judicious tracking of the dependence on $p$, together with number-theoretic considerations, may reduce the dependence on features of $\cal{A}$ other than its cardinality. See \cite{wooley2023condensation} for another approach to the same problem.

    Next, we record the non-Archimedean version of the ``block example'' of \cite{FGM}, which demonstrates that the estimate in \ref{smallcap} cannot be extended to any $(p,q)$ with $\frac{1}{q}+\frac{3}{p}>1$ and $p>2+2\beta^{-1}$, in the small cap regime $\beta>\frac{1}{2}$. Let $f=f_\theta=\sum_{\gamma\prec\theta}f_\gamma$, where $\theta$ is above $B(0,R^{-1/2})$ and each $f_\gamma=e(c_\gamma\cdot x)1_{B(0,R^{\beta})\times B(0,R)}$; here each $c_\gamma$ is an arbitrary point chosen from $\gamma$. It is quick to see that $\hat{f}_\gamma$ is supported in $\gamma$, for each $\gamma$. Then we have
    \begin{equation*}
        f1_{B(0,R^\beta)\times B(0,R^{2\beta})}=R^{\beta-\frac{1}{2}}1_{B(0,R^\beta)\times B(0,R^{2\beta})},
    \end{equation*}
    so that
    \begin{equation*}
        \|f\|_{L^p}\geq R^{\beta-\frac{1}{2}+\frac{3\beta}{p}},\quad\left(\sum_\gamma\|f_\gamma\|_{L^p}^q\right)^{\frac{1}{q}}=R^{\frac{3\beta}{p}+\frac{1}{q}(\beta-\frac{1}{2})}.
    \end{equation*}
    If $\frac{1}{q}+\frac{3}{p}>1$ and $\beta>\frac{1}{2}$, then the corresponding ratio exceeds $R^{\beta(1-\frac{1}{q}-\frac{1}{p})-\frac{1}{p}}$.

    Lastly, we discuss the special role of the prime $p=2$ in the above; particularly, why we have excluded the fields $\F_{2^d}\llparen t\rrparen$, and why $\bf{p}$ is much larger for extensions of $\Q_2$. In the case of characteristic $2$, say $\K=\F_{2^d}\llparen t\rrparen$, by the Frobenius identity $(x+y)^2=x^2+y^2$ we see that $\bb{P}^1$ is contained in a linear subspace of $\K^2$, when the latter is regarded a vector space over $\F_{p^d}$. In particular, linear equations among the first powers of frequency variables imply corresponding equations among the second powers. A straightforward computation (working first over even integer exponents, then interpolating and comparing with the trivial Cauchy--Schwarz bounds) supplies the identity $D_{p,2}^{\F_{2^d}\llparen t\rrparen}(R;\frac{1}{2})=R^{\frac{1}{2}(\frac{p}{2}-1)}$, for any $R\in 2^{d\N}$ and $p\geq 2$.  One may compare with the proof of the local bilinear restriction estimate Theorem \ref{locbilres} below to see the effect of the size of $2$ in $\K$, when $2\neq 0$ is small.

    \subsection{Applications}\label{subsec:application}

    To motivate the study of non-Archimedean decoupling theorems, we present some applications of Theorem \ref{smallcap}. We emphasize that all of the integral estimates obtained in this section are \emph{real}, i.e. integrals of functions over subsets of $\R^n$. In the terminal section \ref{section:transfer}, we will supply the needed transference procedure.

    Our first application is a variant of the well-known fact that small cap decoupling theorems imply ``short mean-value estimates,'' i.e. mean value estimates along certain subsets of the fundamental domain. We first demonstrated this consequence in \cite{johnsrude2025sparse}. For comparison, we first state the usual result.
    \begin{theorem}[c.f. Corollary 1 of \cite{FGM}]\label{thm:short_mv}
        Let $N\in\N$ and $\sigma\in[0,1].$ Let $a_1,\ldots,a_N\in\C$ be arbitrary. Let $1\leq r,q<\infty$ satisfy $\frac{3}{r}+\frac{1}{q}\leq 1$. Then we have
        \begin{equation}\label{ineq:short_mv}
            \fint_{[0,1]\times[0,N^{\sigma-1}]}\Bigg|\sum_{n=1}^Na_ne\Big(nx+n^2t\Big)\Bigg|^rdxdt\lesssim_\eps N^\eps\big(N^{(r-\frac{r}{q}-1)-1-\sigma}+N^{\frac{r}{2}-\frac{r}{q}}\big)\Bigg(\sum_{n=1}^N|a_n|^q\Bigg)^{r/q}.
        \end{equation}
    \end{theorem}

    Our variant replaces the ``short'' domain $[0,1]\times[0,N^{\sigma-1}]$ with a ``sparse'' domain.
    \begin{definition}
        Let $\sigma,\gamma\in[0,1]$ be two parameters. Let $N\in\N$ be arbitrary. The \emph{$(\sigma,\gamma)$-sparse domain at scale $N$} is the set
        \begin{equation}
            A(N,\sigma;\gamma):=\bigcup_{j=0}^{N^{(1+\sigma)(1-\gamma)}-1}[0,1]\times[jN^{-\sigma(1-\gamma)},jN^{-\sigma(1-\gamma)}+N^{\gamma(1+\sigma)-2}].
        \end{equation}
    \end{definition}
    \begin{remark}
        If $\gamma=1$, the union includes only one term which is simply $[0,1]\times[0,N^{\sigma-1}]$.

        Alternately, if $\gamma=0$, then $A(N,\sigma;1)$ is essentially the vertical $N^{-1}$-neighborhood of the maximal progression of horizontal lines $[0,1]\times\{0\}$, $[0,1]\times\{N^{-\sigma}\},[0,1]\times\{2N^{-\sigma}\},\ldots$, in the range $0\leq t<1$.

        As $\gamma$ ranges from $0$ to $1$, the number of thickened lines decreases and the thickness increases.
    \end{remark}

    \begin{remark}
        It always holds that $\cal{L}^2\big(A(N,\sigma;\gamma)\big)=N^{\sigma-1}$ (as long as $N^{(1+\sigma)(1-\gamma)}$ is an integer). Thus, the $A(N,\sigma;\gamma)$ may be viewed as ``cousins'' of the short domain $[0,1]\times[0,N^{\sigma-1}]$.
    \end{remark}

    With this definition, we may state our first application.
    \begin{theorem}\label{thm:sparse_mv}
        Let $N\in\N$ and $\sigma,\gamma\in[0,1]$. Suppose that $N,N^{(1+\sigma)(1-\gamma)}$ are both powers of a common odd prime $p$. Let $a_1,\ldots,a_N\in\C$ be arbitrary. Let $1\leq r,q<\infty$ satisfy $\frac{3}{r}+\frac{1}{q}\leq 1$. Then we have
        \begin{equation}\label{ineq:sparse_mv}
            \fint_{A(N,\sigma;\gamma)}\Bigg|\sum_{n=1}^Na_ne\Big(nx+n^2t\Big)\Bigg|^rdxdt\lesssim(\log N)^{O(1)}\big(N^{(r-\frac{r}{q}-1)-1-\sigma}+N^{\frac{r}{2}-\frac{r}{q}}\big)\Bigg(\sum_{n=1}^N|a_n|^q\Bigg)^{r/q}.
        \end{equation}
        If $\gamma=0$, the $(\log N)^{O(1)}$ term may be refined to $(\log N)^{23+6\sigma}$.
    \end{theorem}
    Thus, the usual ``short'' (or ``small ball'') mean value estimates \eqref{ineq:short_mv} fit into a family of identical mean value estimates \eqref{ineq:sparse_mv}.

    \begin{remark}
        By a variant of the tensor product argument that will be used to combine real and $p$-adic decoupling for the proof of Theorem \ref{thm:sparse_mv}, it will be possible to somewhat relax the condition that $N,N^{(1+\sigma)(1-\gamma)}$ are powers of a common odd prime. Indeed, by taking tensors over distinct $\Q_p$ with $p$ ranging over a small number of odd primes, we can allow $N,N^{(1+\sigma)(1-\gamma)}$ to have prime factorization composed of a small number of small odd primes. We do not pursue this here.
    \end{remark}

    The proof of Theorem \ref{thm:sparse_mv} involves three steps. First, we prove a variant of Theorem \ref{thm:short_mv} working purely over the $p$-adics for an arbitrary choice of $p$. Second, we demonstrate a ``transference procedure,'' carrying integrals of certain $p$-adic exponential sums to integrals of real exponential sums over a special domain; at this point, we have handled the $\gamma=0$ case of Theorem \ref{thm:sparse_mv}. Lastly, to deal with $0<\gamma<1$, we regard the desired exponential sum as a sum of tensor products of real and $p$-adic exponential sums, and apply the general fact that decoupling theorems tensorize (see e.g. Lemma 5.5 of \cite{johnsrude2024restricted}) to estimate the exponential sum along sets resembling a product of $p$-adic intervals with real intervals.

    A related estimate, Prop. \ref{partial}, which is obtained on the way to proving Theorem \ref{smallcap}, implies the following superlevel set estimate for exponential sums. This is the ``non-Archimedean'' variant of Theorem 4 from \cite{FGM}.
    \begin{theorem}\label{thm:dist}
        Fix $\sigma\in[0,1]$, $\eps>0$, and $p\equiv 3$ $(\mathrm{mod         \ 4})$. Let $N\in\N$ be such that $N,N^{\sigma}$ are both powers of $p$ and such that $p\leq D_\eps N^{\eps/20}$. Let $S\subseteq\{1,\ldots,N\}$. Let $\{a_j\}_{j\in S}$ be a family of complex numbers with $|a_j|\leq 1$.

        Define the distribution functions:
        \begin{equation*}
            \lambda_k(S)=\max_{0\leq a<p^k}\#\big\{j\in S:j\equiv a\quad(\mathrm{mod} \,\,p^k)\big\}.
        \end{equation*}
        Let $f$ be the exponential sum
        \begin{equation*}
            f(x,t)=\sum_{j\in S}a_je\Big(\frac{j}{N}x+\frac{j^2}{N^2}t\Big).
        \end{equation*}
        Let $1\leq\alpha\leq N$ and $U_\alpha=\{(x,t)\in[0,1]^2:|f(x,t)|\geq\alpha\}$. Then we have the superlevel set estimate
        \begin{equation*}
            \frac{\mathcal{L}^2\big(U_\alpha\cap A(N,\sigma;0)\big)}{\mathcal{L}^2\big(A(N,\sigma;0)\big)}\lesssim_\eps p^7\alpha^{-4}D_\eps N^\eps\max_{\frac{1+\sigma}{2}\log_pN\leq k\leq\log_pN}\big(\lambda_{k}(S)\cdot\lambda_{(1+\sigma)\log_pN-k}(S)\big)\sum_{j\in S}|a_j|^4.
        \end{equation*}
    \end{theorem}

    Our third application involves a great many possible examples; we select the simplest one, which we believe adequately illustrates the general theme. In contrast to the two results above, this involves quoting the decoupling theorem over a field extension of some $\Q_p$.

    For $\sigma\in[0,1]$ and $N\in\N$, we write
    \begin{equation*}
        A^{(2)}(N,\sigma)=\bigcup_{j,k=0}^{N^{(1+\sigma)}-1}[0,1]^2\times[jN^{-\sigma-1},jN^{-\sigma-1}+N^{-2}]\times[kN^{-\sigma-1},kN^{-\sigma-1}+N^{-2}].
    \end{equation*}
    \begin{theorem}\label{thm:gaussian_mv}
        Let $N\in\N$ and $\sigma\in[0,1]$. Let $\{a_{nm}\}_{1\leq n,m\leq N}$ be arbitrary complex numbers. Let $1\leq r,q<\infty$ satisfy $\frac{3}{r}+\frac{1}{q}\leq 1$. Then we have
        \begin{equation}
            \begin{split}
            \fint_{A^{(2)}(N,\sigma)}\Bigg|\sum_{n,m=1}^Na_{nm}&e\Big(nx-my+z(n^2-m^2)-2wnm\Big)\Bigg|^rdxdydzdw\\
            &\lesssim(\log N)^{23+6\sigma}\big(N^{(r-\frac{r}{q}-1)-1-\sigma}+N^{\frac{r}{2}-\frac{r}{q}}\big)\left(\sum_{n,m=1}^N|a_{nm}|^q\right)^{r/q}.
            \end{split}
        \end{equation}
    \end{theorem}
    \begin{remark}
        The phase function may be substituted with many equivalents, e.g.
        \begin{equation*}
            nx+my+z(n^2-m^2)+2wmn.
        \end{equation*}
    \end{remark}
    Theorem \ref{thm:gaussian_mv} follows from essentially the same analysis as was used in Theorem \ref{thm:sparse_mv}, with the $\Q_p$ in the latter substituted for $\K=\Q_p\big[\sqrt{-1}\big]$ subject to $p\equiv 3\,\,(\mathrm{mod}\,4)$.

    The proofs of Theorems \ref{thm:sparse_mv}, \ref{thm:dist}, and \ref{thm:gaussian_mv} will be postponed to the end in Section \ref{section:transfer}.

    \subsection{Brief overview of method}\label{overview_sub}

    We discuss the method for proving Theorem \ref{squarefunction}. We will adopt several temporary notations for the sake of an intuitive sketch, which will later be abandoned in favor of the technical approach. 
    
    The basic decomposition used in the proof of Theorem \ref{squarefunction} is the two-part decomposition
    \begin{equation*}
        \cal{N}(\bb{P}^1)=\bigsqcup\theta,\quad \theta-\theta=:\delta\theta=\bigsqcup\delta\theta^{(k)}.
    \end{equation*}
    Here the sets $\delta\theta^{(k)}$ are understood as follows: if $\theta$ is the cap about $0$ for simplicity, that is, $\theta=B(0,R^{-1/2})\times B(0,R^{-1})$, such that in particular $\theta=\delta\theta$ (in our non-Archimedean setting), then for $0\leq k\leq N$ we write $\delta\theta^{(k)}$ for
    \begin{equation*}
        \begin{split}
        \delta\theta^{(k)}&:=\Big\{(x,y)\in\delta\theta:|x|\in\big(R^{\frac{k-1}{2N}-1},R^{\frac{k}{2N}-1}\big]\Big\},\quad 1\leq k\leq N,\\
        \delta\theta^{(0)}&:=\Big\{(x,y)\in\delta\theta:|x|\leq R^{-1}\Big\}.
        \end{split}
    \end{equation*}
    Here $N\sim\log R$ is an integer. Other $\delta\theta$ will be decomposed similarly; so too for caps of different sizes, e.g. the $\tau$ of shape $R^{-1/3}\times R^{-2/3}$. Thus, the parameter $k$ measures the distance from the center of the cap. The raison d'\^etre of this decomposition is the pair of estimates
    \begin{equation}\label{introhigh}
        \int\left|\sum_{\theta}\cal{P}_{\delta\theta^{(k)}}\big[|f|^2\big]\right|^2\lesssim\sum_{\tau}\int\left|\sum_{\theta\subseteq\tau}\cal{P}_{\delta\theta^{(k)}}\big[|f|^2\big]\right|^2,
    \end{equation}
    (writing $\cal{P}_A$ for the Fourier projection onto a set $A$), valid whenever $k>0$ and the $\tau$ have diameter $d(\tau)\geq R^{\frac{k}{2N}-1}$; and the pointwise estimate
    \begin{equation}\label{introlow}
        \left|\sum_\theta \cal{P}_{\delta\theta^{(k)}}[f]\right|^2=\sum_\tau\left|\sum_{\theta\subseteq\tau} \cal{P}_{\delta\theta^{(k)}}[f]\right|^2,
    \end{equation}
    valid whenever $k>0$ and the $\tau$ have diameter $d(\tau)\geq R^{-\frac{|k|}{N}}$. \eqref{introhigh} and \eqref{introlow} are known as the high and low lemmas, respectively; see Lemmas \ref{highlemma} and \ref{lowlemma} below. By repeatedly applying estimates of the form \eqref{introhigh} and \eqref{introlow}, together with the usual $L^4$ C\'ordoba--Fefferman square function estimate, we find that for each $k\geq\ell$ we have that
    \begin{equation*}
        \int\Big|\sum_{\substack{\tau\\\mathrm{diam}(\tau)=R^{-\frac{\ell}{N}}}}\cal{P}_{\delta\tau^{(k)}}[f]\Big|^4\lessapprox\sum_{m=\ell}^N\sum_{\substack{\tau\\\mathrm{diam}(\tau)=R^{-\frac{m}{N}}}}\int\Big|\sum_{\theta\subseteq\tau}\cal{P}_{\delta\theta^{(m)}}\big[|f|^2\big]\Big|^2.
    \end{equation*}
    See the proof of Prop. \ref{lasthighest} below. It remains to analyze the right-hand side. One may observe that each $\Big|\sum_{\theta\subseteq\tau}\cal{P}_{\delta\theta^{(k)}}\big[|f|^2\big]\Big|^2$ is constant on rectangles $U$ of dimension $R\times R^{1-\frac{m}{N}}$, oriented as stated in Theorem \ref{squarefunction}. Thus, for each $U$,
    \begin{equation*}
        \int_U\Big|\sum_{\theta\subseteq\tau}\cal{P}_{\delta\theta^{(k)}}\big[|f|^2\big]\Big|^2\leq\mu(U)\left(\fint_U\sum_{\theta\subseteq\tau}\big|\cal{P}_\theta[f]\big|^2\right)^2.
    \end{equation*}
    Thus, we are done in the special case that (a) there is some $\ell$ and $k\geq\ell$ such that $f=\sum_{\tau:\mathrm{diam}(\tau)=R^{-\ell/N}}\cal{P}_{\tau^{(k)}}[f]$, and that (b) for each $m\geq\ell$ and each $\tau$ with $\mathrm{diam}(\tau)=R^{-m/N}$ we have that $\sum_{\theta\subseteq\tau}\cal{P}_{\delta\theta^{(k)}}\big[|f|^2\big]$ is supported on the rectangles in $\cal{G}_{\tau}$. It happens that this special case may be achieved from the general one by a pruning procedure on $f$; that is, an arbitrary input function $f$ is a sum of functions $f_m^\cal{B}$ satisfying (a) and (b), plus an inanity $f_0$ which is small for trivial reasons.

    The argument we follow will put in front the pruned functions $f_m^\cal{B}$, and the Fourier decomposition we sketched above will be expressed in somewhat different language (i.e. the high/low analysis of square functions). In particular, the decompositions $\delta\theta=\bigsqcup\delta\theta^{(k)}$ discussed above are not referenced past this point.
    
    The principal thrust of the argument is equivalent to that of \cite{GM1} and \cite{johnsrude2023small}, though translated to the non-Archimedean setting. This latter stipulation provides many technical advantages, particularly related to the absence of Schwartz tails. The most visible consequence of this is the removal of many technical weights that were present in earlier papers. As a corollary, our analysis is relatively simple, and may be used as a comparative document for those wishing to study the earlier works.
	
	\subsection{Initial notation-setting}

    Let $\K$ be a non-Archimedean local field of characteristic not $2$, i.e. a nondiscrete totally disconnected locally compact topological field which is equipped with a complete absolute value $|\cdot|$ inducing the topology, for which $2\neq 0$. We normalize $|\cdot|$ by insisting that it is the modular function for $\K$, regarded as a LCA group.
    
    Let $\mathcal{V}\subseteq(0,\infty)$ be the range of $|\cdot|$ on the nonzero members of $\K$. Let $\bf{m}\subseteq\cal{O}$ be the maximal ideal, and $\varpi\in\bf{m}$ be a uniformizer. Let $\eta\in\N$ be minimal such that $\mathbf{p}=|\varpi|^{-\eta}$ satisfies $\bf{p}\geq|2|^{-1}$; note that this $\bf p$ matches with the choice in \eqref{eq:p_def}. We will also write $\bf q=\#\bf k=p^f$ for the cardinality of the residue field $\bf k$ of $\K$. Note in particular that $\bf q=\bf p$ if $\mathrm{char}(\bf k)\neq 2$, and $\bf q\leq\bf p=|2|^{-1}$ if $\mathrm{char}(\bf k)=2$.
    
    Fix some $R\in\mathbf{p}^{2\N}$, and write $N=\frac{1}{2}\log_{\bf{p}}(R)$.  For $0\leq k\leq N$, we write $R_k=\bf{p}^{k}$. Let $\alpha\in(0,R)$ and $U_\alpha=\{x\in B_R:|f(x)|>\alpha\}$.

    Write $B_T=\{x\in\K^2:|x|\leq T\}$ for each $T\in\R_{>0}$. For each $k$, write $\mathcal{P}(\cal{O},R_k^{-1})$ for the partition of the unit ball $B_1$ into metric balls of radius $R_k^{-1}$. If $I\in\cal{P}(\cal{O},R_k^{-1})$, we will write $\tau_I$ for the set
    \begin{equation*}
        \tau_I=(a,a^2)+M_{a,\varpi^{\eta k}}[\cal{O}^2],
    \end{equation*}
    where $a\in I$ is chosen arbitrarily and, for each $\lambda\in\K^\times$,
    \begin{equation*}
        M_{a,\lambda}=\begin{bmatrix}
            1 & 0 \\ 2a & 1
        \end{bmatrix}.\mathrm{diag}(\lambda,\lambda^2).
    \end{equation*}
    It is quick to see that $\tau_I$ is independent of $a\in I$. We will write $A_{a,\lambda}(x)=(a,a^2)+M_{a,\lambda}(x)$. We will also lift the subset partial order $\subseteq$ on the set of metric balls $I$ to the associated caps $\tau_I$, which we will write as $\prec$. Thus,
    \begin{equation*}
        I\subseteq J\iff\tau_I\prec\tau_J.
    \end{equation*}
    We introduce this special notation $\prec$ to help clarify at various points that the two objects on either side of the relation symbol are objects of a special type (i.e. caps), rather than arbitrary subsets of $\K^2$.

    Let $\mu$ denote Haar measure on $\K^1$ and $\K^2$; the choice will always be clear from context. We will let $e:\K\to\C$ be some choice of character such that $e(\O)=\{1\}\neq e(\varpi^{-1}\O)$. A convenient choice for $\Q_p$ and $\F_p\llparen t\rrparen$ would be
    \begin{equation}\label{id:padic_char}
        e\left(\sum_{n=N}^\infty a_np^n\right)=\exp\left(2\pi i \sum_{n=N}^{-1}a_np^n\right),\quad N\in\Z, \{a_n\}_n\in\{0,\ldots,p-1\}^\Z
    \end{equation}
    for $\Q_p$, and
    \begin{equation*}
        e\left(\sum_{n=N}^\infty a_nt^n\right)=\exp(2\pi i a_{-1}p^{-1}),\quad N\in\Z, \{a_n\}_n\in\{0,\ldots,p-1\}^\Z
    \end{equation*}
    for $\F_p\llparen t\rrparen$.
    
    With respect to $e$ and $\mu$, we understand the Fourier transform to be
    \begin{equation*}
        \hat{f}(\xi)=\int_\K e(x\xi)f(x)d\mu(x),
    \end{equation*}
    for any Schwartz--Bruhat function $f:\K\to\C$. Functions on $\K^n$ will be handled similarly.

    For $I\in\cal{P}(\cal{O},R_k^{-1})$, we will write
    \begin{equation*}
        N_{\tau_I}=M_{a,1}^{-\top}.\mathrm{diag}(\varpi^{-\eta(N-k)},\varpi^{-
        \eta N}),
    \end{equation*}
    where we make arbitrary choices of $a\in I$. Let $\cal{U}_{\tau_I}$ be the image of the set of translates of $\cal{O}^2$ under $N_{\tau_I}$; $\cal{U}_{\tau_I}$ does not depend on the choice of $a$. For $U\in\cal{U}_{\tau_I}$, define the averaging operator $\cal{A}_U$ by
    \begin{equation*}
        \cal{A}_U[f]=\mu(U)^{-1}\int_Ufd\mu.
    \end{equation*}

    We will use the symbol $\chi$ for the indicator functions of annuli, and put the radius bounds in the denominator. Thus,
    \begin{equation*}
        \chi_{\leq r}=1_{B_{r}},\quad \chi_{>r}= 1_{\K^2\setminus B_{r}},\quad \chi_{(r_1,r_2]}=1_{B_{r_2}\setminus B_{r_1}}.
    \end{equation*}

    We will use subscripts to denote Fourier projections, e.g. $\cal{P}_\theta[f]=f_\theta$, and hereafter omit the projection operators $\cal{P}_A$ discussed exclusively in subseection \ref{overview_sub}. We will write $g_\tau=\sum_{\theta\prec\tau}|f_\theta|^2$.

    \subsection{Overview of remainder of paper}

    In Section \ref{sqfunctionproof}, we prove Theorem \ref{squarefunction}. In the first subsection \ref{lemmas_sub}, we state and prove the basic lemmas that apply to general functions of the prescribed spectral support, which will be the foundation of our analysis. In the second subsection \ref{broad_sub}, we fix the particular function $f$ and define a pruning of that function, to define a decomposition $f=\sum_{m}f_m^\cal{B}+(f-f_N)+f_0$ into pieces over which the preceding lemmas may be applied fruitfully. The upshot of that subsection is a set of estimates that will resolve Theorem \ref{squarefunction} in the special case of ``broad'' domination. In the next subsection \ref{bn_sub}, we run a broad/narrow analysis to conclude a local version of Theorem \ref{squarefunction}. In the terminal subsection \ref{local_sub}, we prove the full theorem by removing the local assumption. In Section \ref{smallcapproof}, we prove Theorem \ref{smallcap} by an essentially elementary manipulation of the conclusion of Theorem \ref{squarefunction}.

	\section{Proof of Theorem \ref{squarefunction}}\label{sqfunctionproof}

    We begin by establishing a number of technical high/low decomposition results, applicable to a general Schwartz--Bruhat function $f$ with Fourier support in $\cal{N}_{R^{-1}}(\bb{P}^1)$. Later, we will fix a single $f$ and perform a pruning procedure, in order to obtain appropriate functions for which the preceding results are useful. 

    \subsection{Technical lemmas}\label{lemmas_sub}

	\begin{lemma}[Low lemma]\label{lowlemma}
		Let $f$ have Fourier support in $\cal{N}_{R^{-1}}(\bb{P}^1)$. For any $1\leq m\leq k\leq N$, and $0\leq s\leq k$,
		\begin{equation*}
			|f|^2*\chi_{\leq R_k^{-1}}^\vee=\sum_{\tau_k}|f|^2*\chi_{\leq R_k^{-1}}^\vee
		\end{equation*}
		for any $\tau_s$.
	\end{lemma}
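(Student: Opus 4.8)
The plan is to run the standard ``$L^4$-orthogonality'' argument: expand $|f_{m,\tau_s}^{\mathcal{B}}|^2$ as a sum over pairs of caps refining $\tau_s$, and observe that convolution against $\chi_{\leq R_k^{-1}}^\vee$ annihilates every genuinely off-diagonal product.

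First I would record two soft inputs. (i) Convolution against $\chi_{\leq R_k^{-1}}^\vee$ multiplies the Fourier transform by $1_{B_{R_k^{-1}}}$, so it is precisely the Fourier projection onto the ball $B_{R_k^{-1}}\subseteq\K^2$. (ii) Since the pruning procedure only restricts Fourier support, $f_m^{\mathcal{B}}$ still has Fourier support in $\cal{N}_{R^{-1}}(\bb{P}^1)$; and since $k\leq N$ forces $R^{-1}\leq R_k^{-2}$, the $R^{-1}$-neighborhood of $\bb{P}^1$ is thin enough to be organized by the scale-$k$ caps, so that over each $I_k\in\mathcal{P}(\cal{O},R_k^{-1})$ the slab $\cal{N}_{R^{-1}}(\bb{P}^1)\cap(I_k\times\K)$ lies inside $\tau_k$. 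As the $x$-projections $I_k$ of the caps $\tau_k\prec\tau_s$ partition the $x$-projection $I_s$ of $\tau_s$, this gives $\mathcal{P}_{\tau_s}=\sum_{\tau_k\prec\tau_s}\mathcal{P}_{\tau_k}$ on functions Fourier-supported in $\cal{N}_{R^{-1}}(\bb{P}^1)$; applied to $f_m^{\mathcal{B}}$,
\begin{equation*}
f_{m,\tau_s}^{\mathcal{B}}=\sum_{\tau_k\prec\tau_s}f_{m,\tau_k}^{\mathcal{B}}.
\end{equation*}

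Next I would expand the square,
\begin{equation*}
|f_{m,\tau_s}^{\mathcal{B}}|^2=\sum_{\tau_k\prec\tau_s}|f_{m,\tau_k}^{\mathcal{B}}|^2+\sum_{\substack{\tau_k,\tau_k'\prec\tau_s\\ \tau_k\neq\tau_k'}}f_{m,\tau_k}^{\mathcal{B}}\,\overline{f_{m,\tau_k'}^{\mathcal{B}}},
\end{equation*}
and note that each off-diagonal term $f_{m,\tau_k}^{\mathcal{B}}\overline{f_{m,\tau_k'}^{\mathcal{B}}}$ has Fourier transform supported in $\tau_k-\tau_k'$, whose first coordinate ranges over $I_k-I_k'$. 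Since $I_k\neq I_k'$ are distinct closed balls of radius $R_k^{-1}$ in the ultrametric field $\K$, every element of $I_k-I_k'$ has absolute value strictly greater than $R_k^{-1}$, so $(\tau_k-\tau_k')\cap B_{R_k^{-1}}=\emptyset$. Hence the projection from (i) kills each off-diagonal term, and by linearity of convolution the left-hand side reduces to $\sum_{\tau_k\prec\tau_s}|f_{m,\tau_k}^{\mathcal{B}}|^2*\chi_{\leq R_k^{-1}}^\vee$, which is the claim.

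The argument is entirely soft; the only step deserving care is (ii) --- verifying that the scale-$k$ caps genuinely tile $\cal{N}_{R^{-1}}(\bb{P}^1)\cap\tau_s$, which is exactly where the hypothesis $k\leq N$ (equivalently $R^{-1}\leq R_k^{-2}$) is used, together with the routine fact that the pruning does not enlarge Fourier supports. The remaining hypotheses ($1\leq m\leq k$ and $0\leq s\leq k$) serve only to make the objects in the displays well-defined.
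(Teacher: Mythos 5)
Your proposal is correct and follows essentially the same route as the paper: decompose $f_{m,\tau_s}^{\mathcal{B}}$ into the $f_{m,\tau_k}^{\mathcal{B}}$, expand the square, and note that each off-diagonal product has Fourier support in $\tau_k-\tau_k'$, which is disjoint from $B_{R_k^{-1}}$ because distinct scale-$R_k^{-1}$ balls in the ultrametric are separated by more than $R_k^{-1}$, so the Fourier projection $\chi_{\leq R_k^{-1}}^\vee*(\cdot)$ annihilates it. The only cosmetic difference is that you spell out the soft inputs (that the convolution is a ball projection and that the pruned pieces retain cap-localized Fourier support, the latter being exactly the paper's standing remark about $f_{k,\theta}^{\mathcal{B}}$), which the paper leaves implicit.
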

	\begin{proof}
        Indeed, if $\tau_k\neq\tau_k'$, then for each $x\in\tau_k$ and $y\in\tau_k'$, $|x-y|>R_k^{-1}$. Thus, $\tau_k-\tau_k'$ is disjoint from $B_{R_k^{-1}}$.
	\end{proof}
	
	\begin{lemma}[High Lemmas]\label{highlemma} Let $f$ have Fourier support in $\cal{N}_{R^{-1}}(\bb{P}^1)$. For any $m,k$, and $l$ such that $1\leq m\leq N$, $l\leq k$, and $k+l\leq N$,
		\begin{enumerate}
			\item[(a)] 
			\begin{equation*}
				\int\Big|\sum_\theta|f_{\theta}|^2*\chi_{>R_k/R}^\vee\Big|^2\leq\sum_{\tau_k}\int\Big|\sum_{\theta\prec\tau_k}|f_{\theta}|^2*\chi_{>R_k/R}^\vee\Big|^2,
			\end{equation*}
			\item[(b)]
			\begin{equation*}
				\int\Big|\sum_{\tau_{k}}|f_{\tau_{k}}|^2*\chi_{>R_{k+l}^{-1}}^\vee\Big|^2\leq|2|^{-1}\bf{p}^{l}p^{-f}\sum_{\tau_k}\int\Big||f_{\tau_k}|^2*\chi_{>R_{k+l}^{-1}}^\vee\Big|^2.
			\end{equation*}
		\end{enumerate}
	\end{lemma}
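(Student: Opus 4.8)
The plan is to prove both inequalities by the standard high--low almost-orthogonality argument, carried out through Plancherel's theorem. The only ingredient beyond Fourier-support bookkeeping is the elementary fact that for any Schwartz--Bruhat $g$ the function $|g|^2$ has Fourier support contained in $(\mathrm{supp}\,\hat g)-(\mathrm{supp}\,\hat g)$. Applying this with $g=f_\theta$ and with $g=f_{\tau_k}$ (and noting that $\hat f_\theta$, resp.\ $\hat f_{\tau_k}$, sits inside the parallelogram $\theta$, resp.\ $\tau_k$), one gets that $|f_\theta|^2$ is Fourier-supported in the additive subgroup
\[
\theta-\theta=\{(\xi_1,\xi_2):|\xi_1|\le R^{-1/2},\ |\xi_2-2a_\theta\xi_1|\le R^{-1}\},
\]
where $a_\theta\in\mathcal O$ is the centre of the base ball of $\theta$ (so $2a_\theta$ is the tangent slope of $\mathbb P^1$ there), and that $|f_{\tau_k}|^2$ is Fourier-supported in $\tau_k-\tau_k=\{(\xi_1,\xi_2):|\xi_1|\le R_k^{-1},\ |\xi_2-2a_{\tau_k}\xi_1|\le R_k^{-2}\}$. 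Since $\theta\prec\tau_k$ forces $\theta-\theta\subseteq\tau_k-\tau_k$, and since both asserted inequalities are trivially true when the relevant annulus misses the relevant difference set, one may assume otherwise throughout.

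For (a), work on $(\theta-\theta)\cap\{|\xi|>R_k/R\}$. Because $R^{-1}\le R_k/R$ and $|2a_\theta\xi_1|\le|\xi_1|$, the bound $|\xi_2-2a_\theta\xi_1|\le R^{-1}$ forces $|\xi_1|>R_k/R$ on this set (otherwise $|\xi_2|\le R_k/R$, hence $|\xi|\le R_k/R$). There $|\xi_1^{-1}\xi_2-2a_\theta|\le R^{-1}/|\xi_1|$, so $\xi$ determines $2a_\theta$ --- hence, since multiplication by $2$ scales balls by $|2|$, also $a_\theta$ --- modulo a ball of radius $|2|^{-1}R^{-1}/|\xi_1|$. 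Since $\xi_1$ ranges over the discrete set $\varpi^{\eta N}\mathcal O$, the strict inequality $|\xi_1|>R_k/R$ in fact improves by a power of the uniformiser, and combining this with the defining minimality of $\eta$, i.e. $\mathbf p\ge|2|^{-1}$ (so that in residue characteristic $2$ one has $|2|=\mathbf p^{-1}$), a short computation with the value group shows that this ball is contained in a single element of $\mathcal P(\mathcal O,R_k^{-1})$; that is, $\xi$ determines the cap $\tau_k\succ\theta$. Consequently the functions $\bigl(\sum_{\theta\prec\tau_k}|f_\theta|^2\bigr)*\chi_{>R_k/R}^\vee$ have pairwise disjoint Fourier supports as $\tau_k$ varies, and (a) follows --- indeed with equality --- from Plancherel.

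For (b) one repeats this one scale coarser, with annulus $\{|\xi|>R_{k+l}^{-1}\}$. Here the hypothesis $l\le k$ enters precisely to ensure $R_k^{-2}\le R_{k+l}^{-1}$, which is what makes the analogous step valid: on $(\tau_k-\tau_k)\cap\{|\xi|>R_{k+l}^{-1}\}$ one has $|\xi_1|>R_{k+l}^{-1}$, whence $|\xi_1^{-1}\xi_2-2a_{\tau_k}|\le R_k^{-2}/|\xi_1|<R_k^{-2}R_{k+l}=R_{k-l}^{-1}$. Thus $\xi$ confines $a_{\tau_k}$ to a ball of radius $<|2|^{-1}R_{k-l}^{-1}$, which meets at most $|2|^{-1}R_{k-l}^{-1}/R_k^{-1}=|2|^{-1}\mathbf p^{\,l}$ of the balls in $\mathcal P(\mathcal O,R_k^{-1})$. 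Hence, on that annulus, the Fourier supports of the functions $|f_{\tau_k}|^2*\chi_{>R_{k+l}^{-1}}^\vee$ overlap with multiplicity at most $|2|^{-1}\mathbf p^{\,l}$. Applying Plancherel together with the pointwise inequality $\bigl|\sum_j c_j\bigr|^2\le M\sum_j|c_j|^2$ (valid whenever at most $M$ of the $c_j$ are nonzero), at each frequency $\xi$ with $c_{\tau_k}=\widehat{|f_{\tau_k}|^2}(\xi)$ cut off to $|\xi|>R_{k+l}^{-1}$, then yields (b).

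The step I expect to require the real care is the counting in the middle of each part: verifying that $\xi$ confines the centre $a_\theta$ (resp.\ $a_{\tau_k}$) to a single cap (resp.\ to at most $|2|^{-1}\mathbf p^{\,l}$ caps) at the relevant scale. This is a discreteness computation in the value group $|\K^\times|$, and it is exactly here that the definition of $\mathbf p$ and $\eta$ is used, in order to absorb the factor $|2|$ coming from the tangent slope $2a$. When the residue characteristic is different from $2$ these factors are trivial and the argument is immediate; it is only in residue characteristic $2$ that the size of $2$ in $\K$ genuinely enters, which is also why (b) carries the loss $|2|^{-1}$ while (a) does not.
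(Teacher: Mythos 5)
Your overall route is the same as the paper's: for (a), Plancherel plus disjointness of the Fourier supports of $\sum_{\theta\prec\tau_k}|f_\theta|^2$ outside $B_{R_k/R}$; for (b), Plancherel plus a bounded-overlap (Schur-type) count of how many difference sets $\tau_k-\tau_k$ can contain a given frequency outside $B_{R_{k+l}^{-1}}$. Your part (b) is correct and lands on exactly the paper's constant $|2|^{-1}\mathbf{p}^{l}$; the paper phrases the count as ``only caps whose centers lie within $|2|^{-1}\mathbf{p}^{l-k}$ can overlap off the ball'' and then cites the Schur test, which is the same multiplicity bound you apply pointwise in frequency.

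The gap is in the value-group computation at the heart of your part (a). From the strict inequality $|\xi_1|>R_k/R$ you gain exactly one factor of $|\varpi|$, so $a_\theta$ is pinned down only to a ball of radius $|2|^{-1}|\varpi|R_k^{-1}$; for this to sit inside a single element of $\mathcal{P}(\mathcal{O},R_k^{-1})$ you need $|2|\geq|\varpi|$. The minimality of $\eta$ only gives $|2|^{-1}\leq\mathbf{p}=|\varpi|^{-\eta}$, not $|2|^{-1}\leq|\varpi|^{-1}$: when $\K$ is a \emph{ramified} extension of $\Q_2$, with ramification index $e\geq 2$, one has $|2|=|\varpi|^{e}<|\varpi|$, the ball has radius $|\varpi|^{1-e}R_k^{-1}\geq|\varpi|^{-1}R_k^{-1}$, and it meets several $R_k^{-1}$-balls. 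Concretely, for adjacent $\tau_k\neq\tau_k'$ with $|a_\theta-a_{\theta'}|=|\varpi|^{-1}R_k^{-1}$ and a frequency with $|\xi_1|=|\varpi|^{-1}R_k/R$ one gets $|2(a_\theta-a_{\theta'})\xi_1|\leq R^{-1}$, so the sets $\theta-\theta$ and $\theta'-\theta'$ genuinely overlap off $B_{R_k/R}$, and your disjointness claim (``indeed with equality'') fails there; a correct statement in that regime needs a bounded-overlap constant of the same flavour as in (b). In fairness, the paper's own proof of (a) is a one-line assertion of the same disjointness, which is clean only for odd residue characteristic and unramified extensions of $\Q_2$ (or under the literal shear-$a$ definition of $M_{a,\lambda}$, in which no factor of $2$ appears at all), so your argument coincides with the paper's except in this ramified residue-characteristic-$2$ corner, where the specific appeal to the minimality of $\eta$ does not do the job.
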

	\begin{proof}
		(a): By Plancherel,
		\begin{equation*}
			\int\Big|\sum_\theta|f_{\theta}|^2*\chi_{>R_k/R}^\vee\Big|^2=\int_{|\xi|>R_k/R}\Big|\sum_{\tau_k}\sum_{\theta\prec\tau_k}\widehat{|f_{\theta}|^2}(\xi)\Big|^2.
		\end{equation*}
		The supports of the summands $\sum_{\theta\prec\tau_k}\widehat{|f_{m,\theta}^{\mathcal{B}}|^2}$, ranging over distinct $\tau_k$, are disjoint outside of the ball $B_{R_k/R}$. Applying Plancherel, we conclude.
		
		(b): Note that $|f_{\tau_k}|^2$ has Fourier support in the set $\tau_k-\tau_k$. Suppose that $\tau_k$ is centered at $\gamma(t_1)$ and $\tau_k'$ is centered at $\gamma(t_2)$, for some $|t_1-t_2|\geq |2|^{-1}\bf{p}^{l-k}$. Then, if $(\tau_k-\tau_k)\cap(\tau_k'-\tau_k')\setminus B_{R_{k+l}^{-1}}$ is nontrivial, then we may find a solution $a_1,a_2,b_1,b_2\in\cal{O}$ to the system
        \begin{equation*}
            |a_1|,|a_2|>\bf{p}^{k-(k+l)},
        \end{equation*}
        \begin{equation*}
            (\varpi^{\eta k}(a_1-a_2), \varpi^{2\eta k}(b_1-b_2)+2\varpi^{\eta k}(t_1a_1-t_2a_2))=(0,0).
        \end{equation*}
        By the first condition on the second display, $a_1=a_2$. But then
        \begin{equation*}
            |\varpi^{2\eta k}(b_1-b_2)+2\varpi^{\eta k}(t_1a_1-t_2a_2)|\geq |2|\bf{p}^{-k}|a_1||t_1-t_2|-\bf{p}^{-2k}|b_1-b_2|>\bf{p}^{-(-k+k-k-l+l-k)}-\bf{p}^{-2k}=0.
        \end{equation*}
        By the strict inequality, the leftmost expression is nonzero, contradicting the previous assummption.

        Thus, we know that the $\tau_k-\tau_k$ can only overlap on $\Q_p^2\setminus B_{R_{k+l}^{-1}}$ with those $\tau_k'-\tau_k'$ corresponding to time parameters within a distance of $|2|^{-1}\bf{p}^{l-k}$. Thus, appealing to Plancherel,
        \begin{equation*}
            \begin{split}
            \int\Big|\sum_{\tau_{k}}|f_{\tau_{k}}|^2*\chi_{>R_{k+l}^{-1}}^\vee\Big|^2&=\int_{\Q_p^2\setminus B_{R_{k+l}^{-1}}}\Big|\sum_{\tau_{k}}\widehat{|f_{\tau_{k}}|^2}\Big|^2\\
            &\leq|2|^{-1}\bf{p}^{l}p^{-f}\sum_{\tau_k}\int\Big||f_{\tau_k}|^2*\chi_{>R_{k+l}^{-1}}^\vee\Big|^2
            \end{split}
        \end{equation*}
        by the Schur test.
		
	\end{proof}

    \begin{proposition}[Wave envelope bound of high parts]\label{lasthighest} Let $f$ have Fourier support in $\cal{N}_{R^{-1}}(\bb{P}^1)$ and $1\leq\ell<N$. Then
        \begin{equation*}
            \int\left|\sum_{\tau_{\ell}}|f_{\tau_{\ell}}|^2*\chi_{> R_{\ell}/R}^\vee\right|^2\leq 3\bf{q}^{-1}\bf{p}^{3}N\sum_{k=\ell+1}^N\sum_{\tau_k}\int\Big|\sum_{\theta\prec\tau_k}|f_\theta|^2*\chi_{\leq R_k/R}\Big|^2.
        \end{equation*}

    \end{proposition}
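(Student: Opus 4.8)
The plan is to iterate the high/low lemmas, peeling off one scale at a time, starting from the coarsest scale $\ell$ and working down toward the finest scale $\theta$. At each stage one has a square function built from caps $\tau_k$ of diameter $R_k^{-1}$; the Fourier support of $|f_{\tau_k}|^2$ lives in $\tau_k-\tau_k$, and one splits $\chi_{>R_k/R}=\chi_{>R_{k+1}/R}+\chi_{(R_k/R,R_{k+1}/R]}$ (or the analogous split with an inner-ball cutoff). On the ``high'' piece $\chi_{>R_{k+1}/R}^\vee$ one applies Lemma \ref{highlemma}(a) to pass from the sum over all $\theta$ (or all $\tau_k$) to a sum over $\tau_{k+1}$, picking up orthogonality between the pieces associated to distinct $\tau_{k+1}$; on the ``low'' piece $\chi_{(R_k/R,R_{k+1}/R]}^\vee$, Lemma \ref{lowlemma} lets one replace $|f_{\tau_k}|^2*\chi_{\le R_{k+1}^{-1}}^\vee$ by $\sum_{\tau_{k+1}\prec\tau_k}|f_{\tau_{k+1}}|^2*\chi_{\le R_{k+1}^{-1}}^\vee$, so that the low contribution is exactly of the form appearing on the right-hand side of the proposition (a sum over $\tau_k$ of $L^2$ norms of $\sum_{\theta\prec\tau_k}|f_\theta|^2*\chi_{\le R_k/R}$). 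First I would make the base step precise: at $k=\ell$ we do not yet have a square function over caps, only $\left|\sum_{\tau_\ell}|f_{\tau_\ell}|^2*\chi_{>R_\ell/R}^\vee\right|^2$, which is already in the right starting form.

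The core of the induction is a one-step inequality of the shape
\begin{equation*}
\int\Big|\sum_{\tau_k}|f_{\tau_k}|^2*\chi_{>R_k/R}^\vee\Big|^2 \le C\bf{p}^{?}\sum_{\tau_k}\int\Big|\sum_{\theta\prec\tau_k}|f_\theta|^2*\chi_{\le R_k/R}\Big|^2 + C\bf{p}^{?}\sum_{\tau_{k+1}}\int\Big|\sum_{\theta\prec\tau_{k+1}}|f_\theta|^2*\chi_{>R_{k+1}/R}^\vee\Big|^2,
\end{equation*}
obtained by the triangle inequality after the high/low split, then Lemma \ref{highlemma}(a) on the high part and Lemma \ref{lowlemma} together with the $L^4$ Córdoba–Fefferman (local orthogonality / reverse square function) estimate on the low part — the low part is supported at frequency $\le R_{k+1}/R$, hence locally constant on the relevant rectangles, so one can apply an $L^2$-orthogonality argument over the $\tau_k$'s. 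The point is that the high remainder has exactly the same form as the left-hand side but one scale finer, so after $N-\ell$ iterations the remainder has shrunk to scale $R_N/R = 1$ and (since $\chi_{>1}^\vee$ kills the relevant frequencies, or the term trivially vanishes/is absorbed) disappears, leaving only the accumulated low terms $\sum_{k=\ell}^N\sum_{\tau_k}\int|\sum_{\theta\prec\tau_k}|f_\theta|^2*\chi_{\le R_k/R}|^2$. I would be careful to use the version of the high lemma that only costs an absolute constant (part (a)) rather than part (b) which costs $\bf{p}^l$, so that the per-step loss is $O(1)$ in $\bf{p}$; the statement's factor $\bf{p}^4$ should come from a bounded number of applications of part (b) or of crude Cauchy–Schwarz steps where switching the scale index by a bounded amount is unavoidable (e.g. comparing $\chi_{>R_k/R}$ with $\chi_{>R_{k+1}^{-1}}$ differs by a factor $\bf{p}^{2}$-ish in frequency).

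Finally I would collect the telescoped estimate: each iteration multiplies the running constant by a bounded factor, but crucially the high lemma (a) is loss-free in $\bf{p}$, so the total multiplicative constant is dominated by at most a couple of uses of Lemma \ref{highlemma}(b) (each costing $\le |2|^{-1}\bf{p}^l \le \bf{p}^{l+1}$ for small $l$) plus absolute constants, giving the $3\bf{p}^4$; and the $N$ factor comes from summing the $N-\ell+1 \le N$ low terms after bounding each by the maximum (or, more honestly, from a single Cauchy–Schwarz in $k$ across the $N$ scales when one wants to go from ``$L^2$ of a sum of $N$ pieces'' back to ``sum of $N$ $L^2$'s''). The main obstacle I anticipate is bookkeeping the frequency annuli correctly across scales — making sure the high/low split at step $k$ uses cutoffs $R_k/R$ and $R_{k+1}/R$ that line up with the hypotheses of Lemmas \ref{lowlemma} and \ref{highlemma} ($R_k^{-1}$ versus $R_k/R$: note $R_k/R = R_{N}^{-1}R_k = \bf{p}^{k-2N}$ whereas $R_k^{-1}=\bf{p}^{-k}$, and these agree only when convenient) and that the residual high term genuinely vanishes at $k=N$; this is where the non-Archimedean setting helps, since there are no Schwartz tails and the supports are genuinely disjoint, so the orthogonality is exact rather than approximate.
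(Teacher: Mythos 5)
Your peeling of the annuli $(R_k/R,R_{k+1}/R]$ via the low lemma \ref{lowlemma} plus Lemma \ref{highlemma}(a) reproduces the paper's treatment of the frequencies between $R_\ell/R$ and $R^{-1/2}$, but there is a genuine gap in how you dispose of the remaining high-frequency region, and that is where the real content of the proposition lies. The function $|f_{\tau_\ell}|^2=\sum_{\theta,\theta'\prec\tau_\ell}f_\theta\overline{f}_{\theta'}$ contains cross terms with $\theta\neq\theta'$ whose Fourier supports lie outside $B_{R^{-1/2}}$; these survive every one of your high-pass cutoffs $\chi_{>R_{k+1}/R}$, since all of those cutoffs have radius at most $R_N/R=\mathbf{p}^{-N}=R^{-1/2}$ (not $1$, as you wrote). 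Consequently your ``high remainder'' is not of the same form one scale finer: replacing $\sum_{\tau_k}|f_{\tau_k}|^2$ by $\sum_{\tau_{k+1}}|f_{\tau_{k+1}}|^2$ (or by $\sum_{\theta\prec\tau_{k+1}}|f_\theta|^2$) under $\chi_{>R_{k+1}/R}^\vee$ silently discards all cross terms between distinct children, which Lemma \ref{lowlemma} does not justify (it only gives equality of Fourier transforms inside $B_{R_{k+1}^{-1}}$, a far smaller set than the region your cutoff retains), and the terminal remainder at $k=N$ vanishes only once the expression has been fully refined to the scale-$\theta$ square function --- exactly the refinement that cannot be done for free across frequencies above $R^{-1/2}$.

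The paper's proof handles that region separately: it splits $\{|\xi|>R^{-1/2}\}$ into annuli $(R_k^{-1},R_{k-1}^{-1}]$, applies the low lemma at the matching scale $\tau_{k-1}$ (so cross terms between distinct $\tau_{k-1}$ vanish on that annulus), then Lemma \ref{highlemma}(b) with $l=1$ via the Schur test, then the C\'ordoba--Fefferman estimate \ref{cord_feff} to convert $\int|f_{\tau_{k-1}}|^4$ into the fine square function, and finally a second frequency decomposition of that square function handled again by Lemma \ref{highlemma}(a). In your sketch, Lemma \ref{highlemma}(b) and C\'ordoba--Fefferman appear only as vague sources of the constant $\mathbf{p}^4$, and you attach C\'ordoba--Fefferman to the wrong piece (the ``low part''), so the argument as written does not close: the $L^4$ ingredient is needed specifically to absorb the $|\xi|>R^{-1/2}$ cross terms, and without it your iteration either drops them without justification or never eliminates the remainder.
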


    \begin{proof}
        We decompose
        \begin{equation*}
            \begin{split}
            \int\left|\sum_{\tau_{\ell}}|f_{\tau_{\ell}}|^2*\chi_{> R_{\ell}/R}^\vee\right|^2=&\sum_{k=\ell+1}^N\int\left|\sum_{\tau_{\ell}}|f_{\tau_{\ell}}|^2*\chi_{(R_{k-1}/R,R_k/R]}^\vee\right|^2\\
            &+\sum_{k=\ell+1}^N\int\left|\sum_{\tau_{\ell}}|f_{\tau_{\ell}}|^2*\chi_{(R_{k}^{-1},R_{k-1}^{-1}]}^\vee\right|^2.
            \end{split}
        \end{equation*}
        
		Consider the first sum. By the low lemma \ref{lowlemma},
		\begin{equation*}
			\int\Big|\sum_{\tau_{\ell}}|f_{\tau_{\ell}}|^2*\chi_{(R_{k-1}/R,R_k/R]}^\vee\Big|^2=\int\Big|\sum_{\theta}|f_{\theta}|^2*\chi_{(R_{k-1}/R,R_k/R]}^\vee\Big|^2.
		\end{equation*}
		By the high lemma (a) and Cauchy--Schwarz, we have
        \begin{equation*}
			\int\Big|\sum_{\theta}|f_{\theta}|^2*\chi_{(R_{k-1}/R,R_k/R]}^\vee\Big|^2\leq\bf{p}\sum_{\tau_{k}}\int\Big|\sum_{\theta\prec\tau_{k}}|f_{\theta}|^2*\chi_{\leq R_k/R}^\vee\Big|^2.
		\end{equation*}
		
		Next, we consider the second sum in our decomposition. For each $\ell+1\leq k\leq N$, by the low lemma \ref{lowlemma},
        \begin{equation*}
			\int\Big|\sum_{\tau_{\ell}}|f_{\tau_{\ell}}|^2*\chi_{(R_{k}^{-1},R_{k-1}^{-1}]}^\vee\Big|^2=\int\Big|\sum_{\tau_{k-1}}|f_{\tau_{k-1}}|^2*\chi_{(R_{k}^{-1},R_{k-1}^{-1}]}^\vee\Big|^2.
		\end{equation*}
		By part (b) of the high lemma \ref{highlemma},
		\begin{equation*}
            \int\Big|\sum_{\tau_{k-1}}|f_{\tau_{k-1}}|^2*\chi_{(R_{k}^{-1},R_{k-1}^{-1}]}^\vee\Big|^2\leq|2|^{-1}\bf q^{-1}\bf{p}\sum_{\tau_{k-1}}\int|f_{\tau_{k-1}}|^4.
		\end{equation*}
		By the reverse square function estimate for $\mathbb{P}^1$, Prop. \ref{cord_feff} below,
		\begin{equation*}
			\int|f_{\tau_{k-1}}|^4\leq 2\int\Big|\sum_{\theta\prec\tau_{k-1}}|f_{\theta}|^2\Big|^2.
		\end{equation*}
		We decompose the right-hand side of the resulting inequality:
		\begin{equation*}
            \begin{split}
			\int\Big|\sum_{\tau_{k-1}}|f_{\tau_{k-1}}|^2*\chi_{(R_{k}^{-1},R_{k-1}^{-1}]}^\vee\Big|^2&\leq 2|2|^{-1}\bf q^{-1}\bf{p}\sum_{\tau_{k-1}}\int\Big|\sum_{\theta\prec\tau_{k-1}}|f_{\theta}|^2\Big|^2\\
            &=2|2|^{-1}\bf q^{-1}\bf{p}\sum_{\tau_{k-1}}\int\Big|\sum_{\theta\prec\tau_{k-1}}|f_{\theta}|^2*\chi_{\leq R_{k-1}/R}^\vee\Big|^2\\
            &+2|2|^{-1}\bf q^{-1}\bf{p}\sum_{k\leq t\leq N}\sum_{\tau_{k-1}}\int\Big|\sum_{\theta\prec\tau_{k-1}}|f_{\theta}|^2*\chi_{(R_{t-1}/R,R_{t}/R]}^\vee\Big|^2.
            \end{split}
		\end{equation*}
        The first summand is of the desired form. Take now $k\leq t\leq N$. By the high lemma,
        \begin{equation*}
            \sum_{\tau_{k-1}}\int\Big|\sum_{\theta\prec\tau_{k-1}}|f_{\theta}|^2*\chi_{(R_{t-1}/R,R_{t}/R]}^\vee\Big|^2\leq\sum_{\tau_{t-1}}\int\Big|\sum_{\theta\prec\tau_{t-1}}|f_{\theta}|^2*\chi_{(R_{t-1}/R,R_{t}/R]}^\vee\Big|^2.
        \end{equation*}
        By Cauchy--Schwarz, we have
        \begin{equation*}
            \sum_{\tau_{t-1}}\int\Big|\sum_{\theta\prec\tau_{t-1}}|f_{\theta}|^2*\chi_{(R_{t-1}/R,R_{t}/R]}^\vee\Big|^2\leq\bf{p}\sum_{\tau_{t}}\int\Big|\sum_{\theta\prec\tau_{t}}|f_{\theta}|^2*\chi_{\leq R_{t}/R}^\vee\Big|^2.
        \end{equation*}
        
        Adding our two contributions, we obtain
        \begin{equation*}
            \int\left|\sum_{\tau_{\ell}}|f_{\tau_{\ell}}|^2*\chi_{> R_{\ell}/R}^\vee\right|^2\leq\sum_{k=\ell+1}^N\Big(\frac{2\bf p^2}{|2|\bf q}(k-\ell)+\bf p\Big)\sum_{\tau_k}\int\Big|\sum_{\theta\prec\tau_k}|f_\theta|^2*\chi_{\leq R_k/R}\Big|^2.
        \end{equation*}
  
    \end{proof}

    We conclude this subsection by recording the C\'ordoba--Fefferman argument over $\K$, which is here represented by two results: Theorem \ref{locbilres} (the local form) and Prop. \ref{cord_feff} (the global form).

    \begin{theorem}[Local bilinear restriction]\label{locbilres} Suppose $|2|=\epsilon\in(0,1]$ in $\K$. Let 
    \begin{equation*}
        0<S\leq D\leq \epsilon\Gamma\leq\epsilon,\quad R>0.
    \end{equation*}
    Let $J\in\cal{P}(\cal{O},\Gamma)$ and $L,L'\in\cal{P}(J,\epsilon^{-1}D)$ be distinct, $I,I'\in\cal{P}(L,D),\cal{P}(L',D)$, respectively, and write $\tau=\tau_I,\tau'=\tau_{I'}$ for the caps above $I,I'$ on $\mathbb{P}^1$. Then we have the estimate
    \begin{equation*}
        \int_{B_{R}}|f_\tau|^2|f_{\tau'}|^2\leq \max(\bf q^{-2}(\Gamma/D)^2(SR)^{-2},1)\int_{B_{R}}\left(\sum_{K\in\cal{P}(J,S)}|f_{\tau_K}|^2\right)^2.
    \end{equation*}
        
    \end{theorem}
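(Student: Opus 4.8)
The plan is to follow the classical Córdoba–Fefferman biorthogonality argument, adapted to the non-Archimedean setting, where the absence of Schwartz tails should make everything sharper. Expand the left-hand side as $\int_{B_R} f_\tau \overline{f_\tau} f_{\tau'} \overline{f_{\tau'}}$ and split each $f_\tau$ into its sub-caps: $f_\tau = \sum_{K \subseteq I} f_{\tau_K}$, $f_{\tau'} = \sum_{K' \subseteq I'} f_{\tau_{K'}}$, where $K \in \cal{P}(I,S)$ etc. Then $|f_\tau|^2|f_{\tau'}|^2 = \sum_{K_1,K_2 \subseteq I}\sum_{K_1',K_2' \subseteq I'} f_{\tau_{K_1}}\overline{f_{\tau_{K_2}}}f_{\tau_{K_1'}}\overline{f_{\tau_{K_2'}}}$, and the Fourier support of the $(K_1,K_2,K_1',K_2')$ term lies in $(\tau_{K_1}-\tau_{K_2})+(\tau_{K_1'}-\tau_{K_2'})$. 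The key geometric claim is that for the integral over the ball $B_R$ — equivalently, after convolving with $\chi_{\le R^{-1}}^\vee$ and using Plancherel — only the diagonal-type interactions survive, up to the stated loss $\max(1,(\Gamma/D)^2(SR)^{-2})$.

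The heart of the matter is the transversality computation: since $I,I'$ sit in distinct children $L,L'$ of $J$ at scale $\epsilon^{-1}D$, their centers $t, t'$ satisfy $|t-t'| \ge \epsilon^{-1}D/\Gamma$ (up to normalization by the size of $J$), which is bounded below by $D/(\epsilon\Gamma)$; this is exactly the separation needed so that the second-coordinate (curvature) terms cannot cancel. Concretely I would parametrize a putative frequency coincidence $\xi_1 + \xi_1' = \xi_2 + \xi_2'$ with $\xi_i \in \tau_{K_i}$, $\xi_i' \in \tau_{K_i'}$, write each cap point in the affine coordinates $A_{a,\lambda}$ from the notation section, and read off: the first coordinates force a relation among the $x$-offsets, and then the second coordinates — which carry a factor $2(t_1 a_1 - \cdots)$ from the cross term, exactly as in the proof of High Lemma (b) — force, via the lower bound on $|t-t'|$ and the factor $|2| = \epsilon$, that the sub-cap indices must be paired off. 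When $SR \ge \Gamma/D$ the ultrametric inequality makes this pairing exact (the error ball $B_{R^{-1}}$ is too small to see any genuine difference), giving the constant $1$; when $SR < \Gamma/D$ one only gets that the contributing tuples lie in a bounded number — namely $(\Gamma/(DSR))^2$ — of near-diagonal configurations, yielding the $\max$.

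With the biorthogonality in hand, the endgame is routine: grouping $|f_\tau|^2 |f_{\tau'}|^2$ restricted to $B_R$ against the diagonal sum $\sum_K |f_{\tau_K}|^2$ and applying Cauchy–Schwarz in the $K$ (resp.\ $K'$) variable bounds the cross term by $\big(\sum_{K \subseteq I}|f_{\tau_K}|^2\big)\big(\sum_{K'\subseteq I'}|f_{\tau_{K'}}|^2\big) \le \big(\sum_{K \in \cal{P}(J,S)}|f_{\tau_K}|^2\big)^2$ pointwise; integrating over $B_R$ and carrying the transversality constant through gives the claim. The main obstacle I anticipate is bookkeeping the exact separation constant between $\tau$ and $\tau'$ in terms of $\Gamma, D, \epsilon$ and making the case split at the threshold $SR = \Gamma/D$ clean — the arithmetic with the three nested scales $S \le D \le \epsilon\Gamma$ and the factor $|2|=\epsilon$ must line up precisely so that the curvature term beats the error exactly when claimed, mirroring the computation already done in Lemma \ref{highlemma}(b).
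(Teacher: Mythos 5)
Your proposal follows essentially the same route as the paper's proof: expand $|f_\tau|^2|f_{\tau'}|^2$ into the quadruple sum over sub-caps at scale $S$, use the Fourier-support condition from integrating over $B_R$ together with the factored curvature term (the factor $2$ meeting the $\epsilon^{-1}D$-separation of $L,L'$ to give effective separation $D$) to force near-diagonal pairs with $|\xi_1-\xi_2|,|\eta_1-\eta_2|<\Gamma/(DR)$, and then split at the threshold $SR=\Gamma/D$ — exact pairing giving constant $1$, or a Schur/Cauchy--Schwarz count of $(\Gamma/(DSR))^2$ configurations otherwise. This matches the paper's argument in both structure and constants, so no further comment is needed.
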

    \begin{proof}
        We may assume that $J=B(0,\Gamma)$. We will use $\theta,\vartheta$ for various $\tau_K$ with $K\in\cal{P}(J,S)$. Computing directly,
        \begin{equation*}
            \int_{B_R}|f_\tau|^2|f_{\tau'}|^2=\sum_{\substack{\theta_1,\theta_2\prec\tau\\\vartheta_1,\vartheta_2\prec\tau'}}\int_{B_R}f_{\theta_1}\overline{f}_{\theta_2}f_{\vartheta_1}\overline{f}_{\vartheta_2},
        \end{equation*}
        where a given tuple $(\theta_1,\theta_2,\vartheta_1,\vartheta_2)$ produces a nontrivial summand only if $(\theta_1-\theta_2+\vartheta_1-\vartheta_2)\cap B_{R^{-1}}\neq\emptyset$. Suppose this is the case. We may find $\xi_1,\xi_2,\eta_1,\eta_2$ be such that
        \begin{equation*}
            (\xi_1,\xi_1^2)\in\theta_1,(\xi_2,\xi_2^2)\in\theta_2,\quad(\eta_1,\eta_1^2)\in\vartheta_1,(\eta_2,\eta_2^2)\in\vartheta_2,
        \end{equation*}
        and such that there are $\eps_1,\eps_2,\delta_1,\delta_2\in\K$ with $|\eps_\iota|,|\delta_\iota|\leq S^2$, such that
        \begin{equation*}
        (\xi_1,\xi_1^2+\eps_1)-(\xi_2,\xi_2^2+\eps_2)+(\eta_1,\eta_1^2+\delta_1)-(\eta_2,\eta_2^2+\delta_2)\in B_{R^{-1}}.
        \end{equation*}
        
        Note that
        \begin{equation*}
            |(\xi_1+\xi_2)-(\eta_1+\eta_2)|>D;
        \end{equation*}
        indeed, the two bracketed expressions belong to distinct members of $\cal{P}(\cal{O},|2|\epsilon^{-1} D)$. By the assumption on the points, $|(\xi_1-\xi_2)+(\eta_1-\eta_2)|\leq R^{-1}$. Thus,
        \begin{equation*}
            \xi_1^2-\xi_2^2+\eta_1^2-\eta_2^2=(\xi_1-\xi_2)((\xi_1+\xi_2)-(\eta_1+\eta_2))+\cal{O}(\Gamma R^{-1}).
        \end{equation*}
        The first summand has size $>D|\xi_1-\xi_2|$, so the vanishing condition may hold only if
        \begin{equation*}
            |\xi_1-\xi_2|< D^{-1}\max(\Gamma R^{-1},S^2).
        \end{equation*}
        Symmetrically, it is necessary for $|\eta_1-\eta_2|<T:=D^{-1}\max(\Gamma R^{-1},S^2)$.  We divide into cases. Suppose first that $T\leq \bf qS$. Then we certainly have
        \begin{equation*}
            \int_{B_R}|f_\tau|^2|f_{\tau'}|^2=\sum_{K_1\in\cal{P}(I,S)}\sum_{K_2\in\cal P(I',S)}\int_{B_R}|f_{\tau_{K_1}}|^2|f_{\tau_{K_2}}|^2,
        \end{equation*}
        and we are done.
        
        In the alternate case, we have $S<\bf q^{-1}T$, and we take our weaker identity
        \begin{equation*}
            \int_{B_R}|f_\tau|^2|f_{\tau'}|^2=\sum_{J_1\in\cal{P}(I,\bf q^{-1}T)}\sum_{J_2\in\cal P(I',\bf q^{-1}T)}\int_{B_R}|f_{\tau_{J_1}}|^2|f_{\tau_{J_2}}|^2,
        \end{equation*}
        and apply Cauchy--Schwarz to conclude
        \begin{equation*}
            \int_{B_R}|f_\tau|^2|f_{\tau'}|^2\leq \bf q^{-2}S^{-2}T^2\sum_{K_1\in\cal{P}(I,T)}\sum_{K_2\in\cal P(I',T)}\int_{B_R}|f_{\tau_{K_1}}|^2|f_{\tau_{K_2}}|^2.
        \end{equation*}
        Finally, we have the implication
        \begin{equation*}
            S<\bf q^{-1}T\implies S^2\leq\Gamma R^{-1},
        \end{equation*}
        so that our inequalities may take the shape
        \begin{equation*}
            \int_{B_R}|f_\tau|^2|f_{\tau'}|^2\leq\int_{B_R}\left(\sum_{K\in\cal{P}(J,S)}|f_{\tau_K}|^2\right)^2\times\begin{cases}
                1 & T\leq \bf qS\\
                \bf q^{-2}(\Gamma/D)^2(RS)^{-2} & S<\bf q^{-1}T,
            \end{cases}
        \end{equation*}
        which clearly implies our conclusion.
        
    \end{proof}

    Similarly, we may prove:
    \begin{proposition}[C\'ordoba--Fefferman square function estimate]\label{cord_feff} Suppose $\mathrm{char}(\K)\neq 2$. If $f$ has Fourier support in $\cal{N}_\delta(\mathbb{P}^1)\subseteq\K^2$, then 
        \begin{equation*}
            \int|f|^4\leq 2\int\left(\sum_\theta|f_\theta|^2\right)^2,
        \end{equation*}
        where the caps $\theta$ have size $\delta\times\delta^2$.
    \end{proposition}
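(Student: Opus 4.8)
The plan is to reduce the $L^4$ estimate to an $\ell^2$-almost-orthogonality statement on Fourier side, exactly as in the classical Córdoba–Fefferman argument, but exploiting the fact that over a non-Archimedean field the caps $\theta$ partition $\cal{N}_\delta(\bb{P}^1)$ exactly (no overlap, no tails) so the bookkeeping is cleaner. First I would expand $\int|f|^4 = \int |f^2|^2$ by Plancherel as $\int_{\K^2}\big|\widehat{f^2}\big|^2 = \int_{\K^2}\big|\sum_{\theta_1,\theta_2}\widehat{f_{\theta_1}f_{\theta_2}}\big|^2$, where each $\widehat{f_{\theta_1}f_{\theta_2}}$ is supported in $\theta_1+\theta_2$. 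The key geometric observation is the standard one: if $\theta_1,\theta_2,\theta_1',\theta_2'$ are caps of size $\delta\times\delta^2$ on $\bb{P}^1$ with $(\theta_1+\theta_2)\cap(\theta_1'+\theta_2')\neq\emptyset$, then — using that the first coordinates satisfy $|\xi_1+\xi_2 - \xi_1'-\xi_2'|\leq\delta$ and the second coordinates force $|\xi_1^2+\xi_2^2-\xi_1'^2-\xi_2'^2|\lesssim\delta^2$, hence $|(\xi_1-\xi_2)^2 - (\xi_1'-\xi_2')^2|\lesssim\delta^2$ after subtracting the square of the first relation (this is where $\mathrm{char}(\K)\neq 2$ enters, to divide by $2$) — the unordered pair $\{\theta_1,\theta_2\}$ is determined by $\{\theta_1',\theta_2'\}$. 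So after grouping the double sum into the diagonal part $\sum_\theta |f_\theta|^2$ (from $\theta_1=\theta_2$) and the off-diagonal cross terms $\sum_{\theta_1\neq\theta_2} f_{\theta_1}f_{\theta_2}$, the off-diagonal Fourier supports $\theta_1+\theta_2$ (for unordered $\{\theta_1,\theta_2\}$) are pairwise disjoint, and the diagonal support $\{2\xi : |\xi|\leq\delta\}$ may or may not overlap them.

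Concretely, I would write $f^2 = \big(\sum_\theta f_\theta\big)^2 = \sum_\theta f_\theta^2 + \sum_{\theta_1\neq\theta_2} f_{\theta_1}f_{\theta_2}$ and estimate via the triangle inequality in $L^2$: $\|f^2\|_{L^2} \leq \big\|\sum_\theta f_\theta^2\big\|_{L^2} + \big\|\sum_{\theta_1\neq\theta_2}f_{\theta_1}f_{\theta_2}\big\|_{L^2}$. For the first term, since each $f_\theta^2$ has Fourier support in $2\cdot B_\delta$ (all the same ball), I just use $\big\|\sum_\theta f_\theta^2\big\|_{L^2}\leq \sum_\theta\|f_\theta^2\|_{L^2}$ is too lossy; instead note $\big|\sum_\theta f_\theta^2\big| \leq \sum_\theta|f_\theta|^2$ pointwise, so $\big\|\sum_\theta f_\theta^2\big\|_{L^2}\leq \big\|\sum_\theta|f_\theta|^2\big\|_{L^2}$. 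For the second term, by the pairwise disjointness of the supports $\theta_1+\theta_2$ over unordered pairs and Plancherel, $\big\|\sum_{\{\theta_1,\theta_2\}} f_{\theta_1}f_{\theta_2}\big\|_{L^2}^2 = \sum_{\{\theta_1,\theta_2\}}\|f_{\theta_1}f_{\theta_2}\|_{L^2}^2$ (counting ordered pairs this is $\tfrac12\sum_{\theta_1\neq\theta_2}\|f_{\theta_1}f_{\theta_2}\|_2^2$, but for the upper bound I can just bound it by $\sum_{\theta_1,\theta_2}\|f_{\theta_1}f_{\theta_2}\|_{L^2}^2 = \int\big(\sum_\theta|f_\theta|^2\big)^2$ after discarding the $\tfrac12$ and reincluding the diagonal). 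Squaring the triangle inequality and using $(a+b)^2\leq 2a^2+2b^2$ then yields $\int|f|^4 \leq 2\int\big(\sum_\theta|f_\theta|^2\big)^2 + 2\int\big(\sum_\theta|f_\theta|^2\big)^2$, which is off by a factor of $2$ from the claim; so I instead need to be a little more careful — better to observe that the diagonal term $\sum_\theta f_\theta^2$ and the off-diagonal term live on Fourier supports that I can make genuinely disjoint by separating out those cross terms $\{\theta_1,\theta_2\}$ with $(\theta_1+\theta_2)\cap (2B_\delta)\neq\emptyset$, which are exactly those with $\theta_1,\theta_2$ both within $O(\delta)$ of $0$, hence there is essentially one such pair — folding it into the diagonal and applying Plancherel once gives the clean constant $2$.

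The main obstacle I anticipate is getting the constant exactly $2$ rather than $4$: the naive triangle-inequality split loses a factor, so the argument must instead be a single application of Plancherel to $\widehat{f^2}$, partitioning frequency space into the regions $\theta_1+\theta_2$ for \emph{unordered} pairs $\{\theta_1,\theta_2\}$ (including $\theta_1=\theta_2$), which genuinely partition $\mathrm{supp}(\widehat{f^2})$ up to measure zero thanks to the non-Archimedean geometry and $\mathrm{char}(\K)\neq2$; then $\int|\widehat{f^2}|^2 = \sum_{\{\theta_1,\theta_2\}}\int|\widehat{f_{\theta_1}f_{\theta_2}}|^2 = \sum_{\{\theta_1,\theta_2\}}\int|f_{\theta_1}|^2|f_{\theta_2}|^2 \leq \int\big(\sum_\theta|f_\theta|^2\big)^2$, where the inequality only spends a factor $2$ because each ordered pair with $\theta_1\neq\theta_2$ is counted once on the left but twice on the right — actually this gives constant $1$, not $2$, so the factor of $2$ in the statement is simply slack, and the proof produces the stronger bound; I would present it accordingly, noting the $2$ is there only to absorb the degenerate overlap between the diagonal ball $2B_\delta$ and a neighboring sum-set when $\delta$ is not a perfect power of $\bf{p}$, if that case arises. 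Verifying that the sets $\theta_1+\theta_2$ are exactly disjoint (not merely finitely overlapping) is the one place requiring the characteristic hypothesis and a short explicit computation with uniformizers, mirroring the computation in the proof of Theorem \ref{locbilres} above.
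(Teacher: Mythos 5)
Your approach is at bottom the same biorthogonality argument the paper uses: the paper expands $\int|f|^4=\sum_{\theta_1,\theta_2,\vartheta_1,\vartheta_2}\int f_{\theta_1}\overline{f}_{\theta_2}f_{\vartheta_1}\overline{f}_{\vartheta_2}$ and observes, by the same computation as in Theorem \ref{locbilres}, that a term survives only if $\theta_1=\theta_2,\vartheta_1=\vartheta_2$ or $\theta_1=\vartheta_2,\theta_2=\vartheta_1$; each surviving term equals $\int|f_\theta|^2|f_\vartheta|^2\geq 0$ and each pair $(\theta,\vartheta)$ arises from at most two tuples, which is where the constant $2$ comes from. Your Plancherel/sum-set formulation is an equivalent packaging of the same fact, and you correctly identify where $\mathrm{char}(\K)\neq 2$ enters, via $(\xi_1-\xi_2)^2=2(\xi_1^2+\xi_2^2)-(\xi_1+\xi_2)^2$. (Both your write-up and the paper's tacitly assume $|2|=1$ when asserting exact disjointness/diagonality; when $|2|<1$ the separation must be taken at scale $|2|^{-1}\delta$ as in Theorem \ref{locbilres} and the constant degrades, so this is not a defect specific to your argument.)

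The genuine error is in your final accounting, where you claim the single-Plancherel computation yields constant $1$ and that the $2$ in the statement is slack. In the expansion $f^2=\sum_{\theta_1,\theta_2}f_{\theta_1}f_{\theta_2}$, each off-diagonal product carries multiplicity $2$ once you regroup by unordered pairs, so disjointness of the sum-sets gives
\begin{equation*}
\int|f|^4=\sum_\theta\int|f_\theta|^4+4\sum_{\substack{\{\theta_1,\theta_2\}\\ \theta_1\neq\theta_2}}\int|f_{\theta_1}|^2|f_{\theta_2}|^2=\sum_\theta\int|f_\theta|^4+2\sum_{\theta_1\neq\theta_2}\int|f_{\theta_1}|^2|f_{\theta_2}|^2,
\end{equation*}
not $\sum_{\{\theta_1,\theta_2\}}\int|f_{\theta_1}f_{\theta_2}|^2$. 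Constant $1$ is in fact false: with two caps and $|f_{\theta_1}|=|f_{\theta_2}|=1$ on a common ball $B$, the identity above gives $\int_B|f|^4=6\mu(B)$ while $\int_B\big(\sum_\theta|f_\theta|^2\big)^2=4\mu(B)$, a ratio of $3/2>1$. With the multiplicity counted correctly your argument gives exactly the stated constant $2$, in agreement with the paper; the triangle-inequality variant in your middle paragraph (which only yields constant $4$) should simply be discarded.
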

    \begin{proof}
        We expand out
        \begin{equation*}
            \int|f|^4=\sum_{\theta_1,\theta_2,\vartheta_1,\vartheta_2}\int f_{\theta_1}\overline{f}_{\theta_2}f_{\vartheta_1}\overline{f}_{\vartheta_2}.
        \end{equation*}
        By essentially the same analysis above, each tuple corresponds to a nontrivial integral only if $\theta_1=\theta_2,\vartheta_1=\vartheta_2$ or $\theta_1=\vartheta_2,\theta_2=\vartheta_1$. Thus,
        \begin{equation*}
            \sum_{\theta_1,\theta_2,\vartheta_1,\vartheta_2}\int f_{\theta_1}\overline{f}_{\theta_2}f_{\vartheta_1}\overline{f}_{\vartheta_2}\leq 2\sum_{\theta,\vartheta}\int |f_{\theta}|^2|f_{\vartheta}|^2.
        \end{equation*}
        The result follows immediately.
    \end{proof}

	\subsection{Bounding the broad sets}\label{broad_sub}
	
	In this section, we fix a single function $f$ satisfying the hypotheses of Theorem \ref{squarefunction}. We apply the results of the previous subsection to pruned functions arising from $f$. 
 
    We will first need to define the decomposition. For each $I\in\cal{P}(\cal{O},R^{-1/2})$ and associated $\theta$, write
    \begin{equation*}
        \cal{G}_\theta=\left\{U\in\cal{U}_\theta:\fint_U|f_\theta|^2\geq \frac{\alpha^2}{2e^2N^2(\#\theta)^2}\right\},
    \end{equation*}
    and
    \begin{equation*}
        f_{N,\theta}=f_\theta\sum_{U\in\cal{G}_\theta}1_U,\quad f_N=\sum_\theta f_{N,\theta}.
    \end{equation*}
    Inductively, set
    \begin{equation*}
        \cal{G}_{\tau_k}:=\left\{U\in\cal{U}_{\tau_k}:\fint_U\sum_{\theta\prec\tau_k}|f_{k+1,\theta}|^2\geq \frac{\alpha^2}{2e^2N^2(\#\tau_k)^2}\right\},
    \end{equation*}
    and
    \begin{equation*}
        f_{k,\theta}:=f_{k+1,\theta}\sum_{U\in\cal{G}_{\tau_k}}1_{U},\quad f_k=\sum_{\theta}f_{k+1,\theta}.
    \end{equation*}
    We will also write
    \begin{equation*}
        f_{k,\theta}^{\cal{B}}=f_{k,\theta}-f_{k-1,\theta},\quad f_k^\cal{B}=\sum_\theta f_{k,\theta}^\cal{B}
    \end{equation*}

    We will write $g_{k,\tau}=\sum_{\theta\prec\tau}|f_{k,\theta}|^2$, $g_{k,\tau}^{\cal{B}}=\sum_{\theta\prec\tau}|f_{k,\theta}^{\cal{B}}|^2$.

    \begin{remark}
        Each $f_{k,\theta},f_{k,\theta}^\cal{B}$ is Fourier-supported in $\theta$.
    \end{remark}

    \begin{remark}[Pruning inequalities]
        For each $0\leq k<N$ and each $\theta$, we have the pointwise inequalities
        \begin{equation}\label{ineq:pruning}
            \max\big(|f_{k,\theta}|,|f_{k,\theta}^{\cal B}|\big)\leq|f_{k+1,\theta}|\leq |f_\theta|.
        \end{equation}
    \end{remark}
 
    An immediate consequence of the pruning is the following lemma, which demonstrates that the terms on the right-hand side of Prop. \ref{lasthighest} are appropriate for Theorem \ref{squarefunction} when applied to the $f_m^\cal{B}$.

	\begin{lemma}[Wave envelope expansion]\label{waveexp}
		\begin{enumerate}
			\item[(a)] For each $k$ and $\tau_k$, we have
			\begin{equation*}
				\int\Big|\sum_{\theta\prec\tau_k}|f_{m,\theta}^\mathcal{B}|^2*\chi_{\leq R_k/R}^\vee\Big|^2\leq \int\Big|\cal{A}_{\cal{U}_{\tau_k}}[g_{m,\tau_k}^{\cal{B}}]\Big|^2.
			\end{equation*}
			\item[(b)] If $N>k\geq m$, then
			\begin{equation*}
				\int\Big|\cal{A}_{\cal{U}_{\tau_k}}[g_{m,\tau_k}^{\cal{B}}]\Big|^2\leq\sum_{U\in\mathcal{G}_{\tau_k}}\mu(U)\left(\fint_U\sum_{\theta\prec\tau_k}|f_\theta|^2\right)^2.
			\end{equation*}
		\end{enumerate}
		
	\end{lemma}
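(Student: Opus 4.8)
The plan is to prove the two parts in sequence, with part~(a) relying on the low lemma (Lemma~\ref{lowlemma}) and Plancherel, and part~(b) on the definition of the pruning together with a telescoping/orthogonality argument over the scales $m,\dots,N$.

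For part~(a), I would first observe that since each $f_{m,\theta}^{\cal{B}}$ is Fourier-supported in $\theta$, the convolution $|f_{m,\theta}^{\cal{B}}|^2*\chi_{\leq R_k/R}^\vee$ is the Fourier projection of $|f_{m,\theta}^{\cal{B}}|^2$ onto the ball $B_{R_k/R}$. By the low lemma, for $\tau_k$ we can write $\sum_{\theta\prec\tau_k}|f_{m,\theta}^{\cal{B}}|^2*\chi_{\leq R_k/R}^\vee$ as the projection of $g_{m,\tau_k}^{\cal{B}}=\sum_{\theta\prec\tau_k}|f_{m,\theta}^{\cal{B}}|^2$ onto $B_{R_k/R}$, because the cross terms $f_{m,\theta}^{\cal{B}}\overline{f_{m,\theta'}^{\cal{B}}}$ for $\theta\neq\theta'\prec\tau_k$ have Fourier support in $\theta-\theta'$, which is disjoint from $B_{R_k/R}$ (the $\theta$ have diameter $R^{-1/2}$ and are $R_k/R$-separated inside $\tau_k$ is not quite it — rather, $\theta,\theta'$ distinct are separated by more than $R^{-1/2}\geq R_k/R$ since $k\leq N$). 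Then I claim this projection onto $B_{R_k/R}$ is exactly the average $\cal{A}_{\cal{U}_{\tau_k}}[g_{m,\tau_k}^{\cal{B}}]$: the rectangles $U\in\cal{U}_{\tau_k}$ have reciprocal dimensions to $\tau_k-\tau_k$ in the appropriate skew coordinates, so convolving with $\chi_{\leq R_k/R}^\vee$ against a function Fourier-supported in $\tau_k-\tau_k$ is the same as averaging over the dual tiles; the skew matrix $N_{\tau_k}$ is exactly set up so that $\widehat{1_U}$ has the right support. Taking $L^2$ norms of both sides and using that $\cal{A}_{\cal{U}_{\tau_k}}[g_{m,\tau_k}^{\cal{B}}]$ is the full projection (so dropping the high-frequency part only decreases the $L^2$ norm — actually they are equal here) gives~(a). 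I should be careful whether the inequality is equality or genuine inequality; it suffices that the left side is $\leq$ the right side, which follows from Plancherel since the left side is a Fourier restriction of $g_{m,\tau_k}^{\cal{B}}$ and the right side is too, to the possibly-larger region determined by $\cal{U}_{\tau_k}$.

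For part~(b), the point is to replace $g_{m,\tau_k}^{\cal{B}}=\sum_{\theta\prec\tau_k}|f_{m,\theta}^{\cal{B}}|^2$ by $\sum_{\theta\prec\tau_k}|f_\theta|^2$ and to restrict the sum over $U\in\cal{U}_{\tau_k}$ to $U\in\cal{G}_{\tau_k}$. First, since $f_{m,\theta}^{\cal{B}}=f_{m,\theta}-f_{m-1,\theta}$ is obtained from $f_\theta$ by multiplication by an indicator of a union of tiles (a sub-collection of $\cal{U}$ at each scale), we have the pointwise bound $|f_{m,\theta}^{\cal{B}}|\leq|f_{m,\theta}|\leq|f_\theta|$, hence $g_{m,\tau_k}^{\cal{B}}\leq\sum_{\theta\prec\tau_k}|f_\theta|^2$ pointwise, and this survives the averaging $\cal{A}_{\cal{U}_{\tau_k}}$ (averages of pointwise-smaller functions are pointwise smaller). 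Second — and this is the crux — I must show that $\cal{A}_U[g_{m,\tau_k}^{\cal{B}}]=0$ unless $U\in\cal{G}_{\tau_k}$. Here I use that $f_{m,\theta}^{\cal{B}}=f_{m,\theta}-f_{m-1,\theta}$ and that at scale $k\geq m$ the pruning at step $k$ multiplies $f_{k+1,\theta}$ (and hence, after further pruning, $f_{m,\theta}$ and $f_{m-1,\theta}$ for $m\leq k$... wait, need $m-1$ also $\leq k$, i.e. $k\geq m$, which is the hypothesis, but $m-1<m\leq k$ so fine) by $\sum_{U\in\cal{G}_{\tau_k}}1_U$. Therefore $f_{m,\theta}^{\cal{B}}$, for $m\leq k$, is supported on $\bigcup_{U\in\cal{G}_{\tau_k}}U$, so $g_{m,\tau_k}^{\cal{B}}$ vanishes off that union, and $\cal{A}_U[g_{m,\tau_k}^{\cal{B}}]=0$ for $U\notin\cal{G}_{\tau_k}$. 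Combining: $\int|\cal{A}_{\cal{U}_{\tau_k}}[g_{m,\tau_k}^{\cal{B}}]|^2=\sum_{U\in\cal{G}_{\tau_k}}\mu(U)(\cal{A}_U[g_{m,\tau_k}^{\cal{B}}])^2\leq\sum_{U\in\cal{G}_{\tau_k}}\mu(U)(\fint_U\sum_{\theta\prec\tau_k}|f_\theta|^2)^2$, which is the claim.

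The main obstacle I anticipate is the careful bookkeeping in part~(a) of identifying the Fourier-projection onto $B_{R_k/R}$ of a function supported in $\tau_k-\tau_k$ with the tile-average $\cal{A}_{\cal{U}_{\tau_k}}$ — in particular verifying that the skew linear map $N_{\tau_k}=M_{a,1}.\mathrm{diag}(\varpi^{-\eta(N-k)},\varpi^{-\eta N})$ produces tiles $U$ whose dual lattice is exactly adapted to $\tau_k-\tau_k$ truncated to $B_{R_k/R}$, so that averaging over $U$ kills precisely the frequencies outside $B_{R_k/R}$ (in the sheared coordinates) and nothing more. In the non-Archimedean setting this is clean — it is an exact statement about cosets of compact subgroups rather than an approximate one with Schwartz tails — but it still requires spelling out the coordinate change $A_{a,\varpi^{\eta k}}$ and checking that $g_{m,\tau_k}^{\cal{B}}\circ A_{a,\varpi^{\eta k}}$ has Fourier support in a fixed ball whose dual is the unit-tile lattice. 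Part~(b) is essentially a matter of unwinding the definition of the pruning and I expect it to be routine once the support property $\mathrm{supp}(f_{m,\theta})\subseteq\bigcup_{U\in\cal{G}_{\tau_k}}U$ for $m\leq k$ is cleanly recorded (it follows by downward induction on the pruning steps from $N$ to $m$).
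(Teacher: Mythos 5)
Part (b) of your proposal is correct and is essentially the paper's argument: the pointwise bounds $|f_{m,\theta}^{\mathcal{B}}|\leq|f_{k,\theta}|\leq|f_\theta|$ from the pruning, plus the fact that for $m\leq k$ the scale-$k$ pruning factor $\sum_{U\in\mathcal{G}_{\tau_k}}1_U$ has already been applied, so the tile averages vanish off the good tiles.

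Part (a), however, has a genuine gap at its central step. You justify identifying the low projection with the tile average by saying that $g_{m,\tau_k}^{\mathcal{B}}$ is ``Fourier-supported in $\tau_k-\tau_k$'' and that the tiles $U\in\mathcal{U}_{\tau_k}$ are dual to this set; that is too crude and the claimed identification fails at that level of generality. The frequency-dual of the tiles $U$ is a rectangle $U^*_{\tau_k,R}$ of dimensions $R_k/R\times R^{-1}$ (long edge along the tangent at $c_{\tau_k}$), whereas $(\tau_k-\tau_k)\cap B_{R_k/R}$ is a sheared box of thickness $\min(R_k^{-2},R_k/R)$, which for $k<N$ is much thicker than $R^{-1}$; so a function known only to have spectrum in $\tau_k-\tau_k$ can have low-frequency mass in $B_{R_k/R}\setminus U^*_{\tau_k,R}$, and then neither the claimed equality nor the inequality follows. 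Your fallback argument is also backwards: since $U^*_{\tau_k,R}\subseteq B_{R_k/R}$, the ``possibly-larger region determined by $\mathcal{U}_{\tau_k}$'' is in fact the smaller region, so Plancherel monotonicity runs the wrong way. The missing ingredient, which is exactly what the paper's proof supplies, is the finer support statement: each summand $|f_{m,\theta}^{\mathcal{B}}|^2$ has spectrum in $\theta-\theta$, an $R^{-1/2}\times R^{-1}$ slab through the origin tangent at $c_\theta$, and for $\theta\prec\tau_k$ the tangent directions vary by at most $R_k^{-1}$, so over the length $R_k/R$ the transverse deviation is at most $R_k^{-1}\cdot R_k/R=R^{-1}$; hence
\begin{equation*}
\bigcup_{\theta\prec\tau_k}(\theta-\theta)\cap B_{R_k/R}\subseteq U^*_{\tau_k,R}.
\end{equation*}
With this containment the chain $\widehat{g}\,\chi_{\leq R_k/R}=\widehat{g}\,1_{U^*_{\tau_k,R}}\chi_{\leq R_k/R}$, $|\widehat{g}\,1_{U^*_{\tau_k,R}}\chi_{\leq R_k/R}|\leq|\widehat{g}\,1_{U^*_{\tau_k,R}}|$, and Plancherel give (a) (indeed with equality in this non-Archimedean setting, as you suspected), and the multiplier $1_{U^*_{\tau_k,R}}$ is the averaging operator $\mathcal{A}_{\mathcal{U}_{\tau_k}}$. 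Incidentally, your appeal to the low lemma to handle cross terms in (a) is unnecessary: $g_{m,\tau_k}^{\mathcal{B}}$ is by definition the sum of squares $\sum_{\theta\prec\tau_k}|f_{m,\theta}^{\mathcal{B}}|^2$, so no cross terms ever appear.
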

	\begin{proof}
		(a): The Fourier support of $\sum_{\theta\prec\tau_k}|f_{m,\theta}^\mathcal{B}|^2*\chi_{\leq R_k/R}^\vee$ is contained in the set
		\begin{equation*}
			\bigcup_{\theta\prec\tau_k}(\theta-\theta)\cap B_{R_k/R}\subseteq U_{\tau_k,R}^*,
		\end{equation*}
		where $U_{\tau_k,R}^*$ is a rectangle of dimensions $R_k/R\times R^{-1}$ with long edge parallel to $\mathbf{t}_{c_{\tau_k}}$. Thus
		\begin{equation*}
			\begin{split}
				\int\Big|\sum_{\theta\prec\tau_k}|f_{m,\theta}^\mathcal{B}|^2*\chi_{\leq R_k/R}^\vee\Big|^2&=\int\Big|\sum_{\theta\prec\tau_k}\widehat{|f_{m,\theta}^\mathcal{B}|^2}\chi_{\leq R_k/R}\Big|^2\\
				&\leq\int\Big|\sum_{\theta\prec\tau_k}\widehat{|f_{m,\theta}^\mathcal{B}|^2}1_{U_{\tau_k,R}^*}\Big|^2\\
				&=\int\Big|\cal{A}_{\cal{U}_{\tau_k}}\Big[\sum_{\theta\prec\tau_k}|f_{m,\theta}^\mathcal{B}|^2\Big]\Big|^2
			\end{split}
		\end{equation*}
		as claimed.
		
		(b): Since $k\geq m$, $|f_{m,\theta}^\mathcal{B}|\leq|f_{k,\theta}|\leq|f_{k+1,\theta}|\leq|f_\theta|$ by the pruning, so
		\begin{equation*}
			\int\Big|\cal{A}_{\cal{U}_{\tau_k}}\Big[\sum_{\theta\prec\tau_k}|f_{m,\theta}^\mathcal{B}|^2\Big]\Big|^2\leq\int\left|\cal{A}_{\cal{U}_{\tau_k}}[g_{k,\tau_k}]\right|^2.
		\end{equation*}
  
        By the definition of the pruning, $g_{k,\tau_k}$ is supported on the union over $\cal{G}_{\tau_k}$, so
        \begin{equation*}
            \int\left|\cal{A}_{\cal{U}_{\tau_k}}[g_{k,\tau_k}]\right|^2=\sum_{U\in\cal{G}_{\tau_k}}\int_U\left|\cal{A}_{\cal{U}_{\tau_k}}[g_{k,\tau_k}]\right|^2.
        \end{equation*}
        On the other hand, $\cal{A}_{\cal{U}_{\tau_k}}[g_{k,\tau_k}]$ is constant on each such box, so that
        \begin{equation*}
            \sum_{U\in\cal{G}_{\tau_k}}\int_U\left|\cal{A}_{\cal{U}_{\tau_k}}[g_{k,\tau_k}]\right|^2=\sum_{U\in\cal{G}_{\tau_k}}\mu(U)\left(\fint_U\sum_{\theta\prec\tau_k}|f_{k,\theta}|^2\right)^2.
        \end{equation*}
	\end{proof}

 We now initiate a decomposition of the left-hand side of Theorem \ref{squarefunction}, which has the effect of isolating the contributions of the various $f_m^\cal{B}$. This portion of the argument follows closely the approach of \cite{GM1}, Section 3. Recall that $U_\alpha$ is defined as the set
	\begin{equation*}
		U_\alpha=\big\{x\in B_R:|f(x)|>\alpha\big\}.
	\end{equation*}
	We consider also the auxiliary set
	\begin{equation*}
		V_\alpha=\Big\{x\in B_R:|f_N(x)|>\Big(1-\frac{1}{2^{1/2}eN}\Big)\alpha\Big\}.
	\end{equation*}

	We note the relation
	\begin{equation}\label{replacement_containment}
		U_\alpha\subseteq V_\alpha;
	\end{equation}
    indeed, for any $x$, from the difference
    \begin{equation*}
			|f(x)-f_N(x)|\leq\sum_\theta\sum_{U\not\in\mathcal{G}_\theta}1_U(x)|f_\theta(x)|,
	\end{equation*}
    together with the fact that each $|f_\theta|$ is constant on the sets $U$, we see that for any $U\not\in\cal{G}_\theta$ and $x\in U$, we have the bound
    \begin{equation*}
        |f_\theta(x)|=\left(\fint_U|f_\theta|^2\right)^{1/2}\leq \frac{\alpha}{2^{1/2}eN(\#\theta)}.
    \end{equation*}
    Summing over the $\theta$, we obtain \eqref{replacement_containment}.
    
	By the definition of the prunings,
	\begin{equation}\label{pruningchoice}
		\begin{split}
			V_\alpha\subseteq\Big\{x\in V_\alpha:|f_0(x)|\geq (N^{4}+1)^{-1}|&f_N(x)|\Big\}\cup\bigcup_{m=1}^N\Big\{x\in V_\alpha:|f_m^{\mathcal{B}}|(x)\geq N^{-1}(1+N^{-4})^{-1}|f_N(x)|\Big\}\\
			&=:U_\alpha^0\cup\bigcup_{m=1}^NU_\alpha^m.
		\end{split}
	\end{equation}

    We may immediately verify that the set $U_\alpha^0$ is acceptable from the point of view of Theorem \ref{squarefunction}.
	
	\begin{proposition}[Case $m=0$]\label{broadc1} If $N>2$ then
		\[\alpha^4\mu(U_\alpha^0)\leq 20N^{10}\sum_{U\in\mathcal{G}_{\tau_0}}\mu(U)\left(\fint_U\sum_{\theta}|f_\theta|^2\right)^2.\]
	\end{proposition}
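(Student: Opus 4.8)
The goal is to bound $\alpha^4\mu(U_\alpha^0)$ where $U_\alpha^0$ consists of points $x\in V_\alpha$ at which $|f_0(x)|\geq (N^5+1)^{-1}|f_N(x)|$. Since on $V_\alpha$ we have $|f_N(x)|>(1-\tfrac{1}{2^{1/2}eN})\alpha\geq\tfrac12\alpha$, it follows that on $U_\alpha^0$ we have $|f_0(x)|\gtrsim N^{-5}\alpha$, i.e. roughly $|f_0(x)|>cN^{-5}\alpha$ for an explicit constant $c$. Thus $\mu(U_\alpha^0)\leq \mu(\{|f_0|> cN^{-5}\alpha\})$. The plan is to control the right-hand side by a trivial $L^2$-type estimate using the fact that $f_0$ is the ``fully pruned'' piece: by construction, $f_0=f_{0,\cdot}=\sum_\theta f_{0,\theta}$ where each $f_{0,\theta}$ is obtained after all $N$ prunings, hence each $f_{0,\theta}$ is supported on the union of boxes $U$ which were discarded at every stage, and in particular on boxes not in $\cal{G}_{\tau_0}$ — wait, more precisely one uses that the pruning at level $0$ localizes $f_{0,\theta}$ to $\bigcup_{U\in\cal{G}_{\tau_0}}U$ via the step $f_{0,\theta}=f_{1,\theta}\sum_{U\in\cal{G}_{\tau_0}}1_U$. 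Actually the cleanest route: $f_0$ is supported on $\bigcup_{U\in\cal{G}_{\tau_0}}U$ (its spectrum forces local constancy relative to the $\tau_0$-tiling, and its definition multiplies by $\sum_{U\in\cal{G}_{\tau_0}}1_U$), so we only integrate over those $U$.

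\textbf{Key steps.} First, reduce to a superlevel set for $f_0$ as above, absorbing the $N^5$ and the constant $(1-\tfrac{1}{2^{1/2}eN})$ into an explicit numerical constant: $\mu(U_\alpha^0)\leq\mu(\{x:|f_0(x)|>\tfrac{\alpha}{4N^5}\})$ (the precise constant $4$ is chosen so the final inequality closes; this is a routine check). Second, apply Chebyshev in $L^2$: $\mu(\{|f_0|>\beta\})\leq \beta^{-2}\|f_0\|_2^2$, but this alone gives only $\alpha^{-2}$ not $\alpha^{-4}$, so instead apply the $L^4$ reverse square function estimate Prop.~\ref{cord_feff}: since $f_0$ has Fourier support in $\cal{N}_{R^{-1}}(\bb{P}^1)$, $\int|f_0|^4\leq 2\int(\sum_\theta|f_{0,\theta}|^2)^2$. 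Third, use that $f_0$ is supported on $\bigcup_{U\in\cal{G}_{\tau_0}}U$ together with local constancy of each $|f_{0,\theta}|^2$ on the $\tau_0$-boxes $U$ (here $\tau_0$ is the single cap of diameter $1$, so $\cal{U}_{\tau_0}$ is the tiling by $R\times R$ boxes, on which $|f_{0,\theta}|^2$ is constant since $\theta-\theta\subseteq B_{R^{-1}}$): this yields
\begin{equation*}
\int|f_0|^4\leq 2\sum_{U\in\cal{G}_{\tau_0}}\mu(U)\Big(\fint_U\sum_\theta|f_{0,\theta}|^2\Big)^2\leq 2\sum_{U\in\cal{G}_{\tau_0}}\mu(U)\Big(\fint_U\sum_\theta|f_\theta|^2\Big)^2,
\end{equation*}
where the last inequality uses the pruning bound $|f_{0,\theta}|\leq|f_\theta|$. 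Fourth, combine: $(\tfrac{\alpha}{4N^5})^4\mu(U_\alpha^0)\leq\int|f_0|^4$, so $\alpha^4\mu(U_\alpha^0)\leq 4^4 N^{20}\cdot 2\sum_{U\in\cal{G}_{\tau_0}}\mu(U)(\fint_U\sum_\theta|f_\theta|^2)^2$.

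\textbf{Main obstacle.} The claimed bound has $4N^{10}$, not $N^{20}$, so a naive $L^4$ argument loses too much. The fix — and the step I expect to be the crux — is that one should \emph{not} throw away the factor $|f_N|^2$: on $U_\alpha^0$ we have both $|f_0(x)|\geq(N^5+1)^{-1}|f_N(x)|$ and $|f_N(x)|>\tfrac12\alpha$, so $|f_0(x)|^2\cdot|f_N(x)|^2\gtrsim N^{-10}\alpha^2\cdot|f_N(x)|^2$, and hence $\alpha^2 N^{-10}|f_N(x)|^2\lesssim |f_0(x)|^2|f_N(x)|^2\leq |f_0(x)|^2 \sum_\theta|f_\theta|^2\cdot(\#\theta)$... hmm, that introduces $\#\theta$. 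The genuinely correct approach: integrate $|f_0|^2$ against $|f_N|^2$ is still awkward. Instead, the right move is $\mu(U_\alpha^0)\cdot(\tfrac{\alpha}{2})^2\leq\int_{U_\alpha^0}|f_N|^2$ is false in the wrong direction. So one really does want: on $U_\alpha^0$, $\alpha^2\lesssim N^{10}|f_0|^2$, hence $\alpha^4\mu(U_\alpha^0)\lesssim N^{10}\alpha^2\int_{U_\alpha^0}|f_0|^2\lesssim N^{10}\int|f_0|^2|f|^2$ — no. The clean resolution, matching the constant $4N^{10}$: on $U_\alpha^0$, $\alpha^4\leq \big(\tfrac{\alpha}{2}\big)^2\cdot 4\alpha^2\lesssim |f_N|^2\cdot N^{10}|f_0|^2$, giving $\alpha^4\mu(U_\alpha^0)\lesssim N^{10}\int|f_0|^2|f_N|^2$; then bound $\int|f_0|^2|f_N|^2\leq\int|f_0|^2(\sum_\theta|f_\theta|^2)\cdot(\text{something})$ — the honest statement is $|f_N|^2\leq(\#\theta)\sum_\theta|f_{N,\theta}|^2$ pointwise is too lossy. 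I would instead follow \cite{GM1} Section 3 precisely: write $\alpha^4\mu(U_\alpha^0)\leq N^{10}\int_{U_\alpha^0}|f_0|^2|f_N|^2\cdot 4$ and then on each box $U\in\cal{G}_{\tau_0}$ (the only boxes where $f_0\neq0$) use Cauchy–Schwarz, $|f_N|^2\leq(\#\theta)\sum_{\theta}|f_{N,\theta}|^2$ combined with $|f_{N,\theta}|^2\leq|f_\theta|^2$ and the defining inequality of $\cal{G}_{\tau_0}$ (namely $\fint_U\sum_\theta|f_\theta|^2\geq\tfrac{\alpha^2}{2e^2(\#\tau_0)^2}$, with $\#\tau_0=1$) to trade one power of $\alpha^2$ for $\fint_U\sum_\theta|f_\theta|^2$ up to the constant $2e^2$; carrying the arithmetic ($4N^{10}$ versus $4^4\cdot2\cdot N^{20}$) will determine whether an intermediate Chebyshev-in-$L^2$ step suffices or whether one needs the $L^4$ Córdoba–Fefferman estimate only after localizing. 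I expect the final numerology to come out to exactly $4N^{10}$ after using $\#\tau_0=1$ and $e^2/2<4$, but pinning down that constant chain is the delicate part of the proof.
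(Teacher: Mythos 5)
There is a genuine gap: the only argument you actually carry through (the ``Key steps'') proves at best a bound with $N^{20}$, which you yourself note is too weak, and it moreover contains a wrong step: $\sum_\theta|f_{0,\theta}|^2$ is \emph{not} constant on the $R\times R$ boxes $U\in\cal{U}_{\tau_0}$ (the Fourier support $\theta-\theta$ is an $R^{-1/2}\times R^{-1}$ box, not a subset of $B_{R^{-1}}$), so the inequality $\int_U(\sum_\theta|f_{0,\theta}|^2)^2\leq\mu(U)(\fint_U\sum_\theta|f_{0,\theta}|^2)^2$ goes the wrong way (Cauchy--Schwarz gives the reverse). The repair you then sketch is left unexecuted: the detour through $\int_{U_\alpha^0}|f_0|^2|f_N|^2$ with the pointwise bound $|f_N|^2\leq(\#\theta)\sum_\theta|f_{N,\theta}|^2$ introduces a factor $\#\theta\sim R^{1/2}$, which is fatal, and you explicitly defer the decisive bookkeeping (``carrying the arithmetic \ldots will determine whether \ldots''), which is precisely the content of the proposition. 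You even wrote down the correct intermediate step $\alpha^4\mu(U_\alpha^0)\lesssim N^{10}\alpha^2\int_{U_\alpha^0}|f_0|^2$ and then discarded it because you tried to convert the remaining $\alpha^2$ into $|f|^2$ or $|f_N|^2$ pointwise.

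The paper's proof is much simpler and uses no square function estimate, no $f_N$, and no $\#\theta$ factor. Only two powers of $\alpha$ come from a pointwise lower bound: on $U_\alpha^0$ one has $|f_0|\geq(1-\frac{1}{2^{1/2}eN})\frac{\alpha}{N^5+1}$, so Chebyshev in $L^2$ gives $\alpha^2\mu(U_\alpha^0)\lesssim N^{10}\int_{U_\alpha^0}|f_0|^2$ --- this is where $N^{10}$ rather than $N^{20}$ appears. Then $\int_{U_\alpha^0}|f_0|^2\leq\sum_\theta\int_{B_R}|f_\theta|^2=\mu(B_R)\fint_{B_R}\sum_\theta|f_\theta|^2$ by $L^2$ orthogonality of the $f_\theta$ together with the pruning monotonicity $|f_{0,\theta}|\leq|f_\theta|$. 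The remaining $\alpha^2$ is supplied not by any pointwise bound but by the membership $B_R\in\cal{G}_{\tau_0}$ (forced since $U_\alpha^0\neq\emptyset$ makes $f_0\not\equiv0$), whose defining inequality gives $\alpha^2\lesssim\fint_{B_R}\sum_\theta|f_\theta|^2$; multiplying the two displayed facts yields $\alpha^2\int_{U_\alpha^0}|f_0|^2\leq\mu(B_R)\big(\fint_{B_R}\sum_\theta|f_\theta|^2\big)^2$, and $B_R$ is the single relevant box in $\cal{G}_{\tau_0}$. You named this mechanism in passing (``trade one power of $\alpha^2$ for $\fint_U\sum_\theta|f_\theta|^2$ via the defining inequality of $\cal{G}_{\tau_0}$''), but you never assembled it into a proof, so the argument as proposed does not establish the stated estimate.
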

	\begin{proof}
		Take any $x\in U_\alpha^0$. Then $\cal{G}_{\tau_0}\neq\emptyset$, in particular is composed of the set $B_R$. Hence we have
		\begin{equation*}
            |f_0(x)|=|\sum_{\theta}1_{B_R}(x)f_{1,\theta}(x)|.
		\end{equation*}
        Thus, we have an inequality
        \begin{equation*}
            \int_{U_\alpha^0}|f_0|^2\leq \sum_{\theta}\int_{B_R}|f_{\theta}|^2.
        \end{equation*}
        Since $B_R\in\cal{G}_{\tau_0}$ we have
		\begin{equation*}
			\fint_{B_R}\sum_{\theta}|f_{\theta}|^2\geq\frac{\alpha^2}{2e^2N^2}.
		\end{equation*}
        Multiplying the two inequalities together,
        \begin{equation*}
            \frac{\alpha^2}{2e^2N^2}\int_{U_\alpha^0}|f_0|^2\leq \mu(U)\left(\fint_{B_R}\sum_{\theta}|f_\theta|^2\right)^2.
        \end{equation*}
        Finally, on $U_\alpha^0$ we have $|f_0(x)|\geq(1-\frac{1}{2^{1/2}eN})\frac{\alpha}{N^{4}+1}$; the result follows.
	\end{proof}

    In order to bound the contributions of the $U_\alpha^m$, we will need to assert an extra ``broad'' hypothesis. It will happen that this entails a high-frequency dominance property, which will allow us to apply Prop. \ref{lasthighest}. The basic result is in the following lemma.

    \begin{lemma}[Weak high-domination of bad parts]\label{wkhighdom} Let $1\leq m\leq\ell\leq N$ and $0\leq k<m$. Let $\tau_k$ be arbitrary.
		
		\begin{enumerate}
			\item[(a)] For each $x$, we have
            \begin{equation*}
                \left|\sum_{\tau_\ell\prec\tau_k}|f_{m,\tau_\ell}^\cal{B}|^2*\chi_{\leq R_\ell/R}^\vee(x)\right|\leq \frac{\alpha^2(\#\tau_\ell\prec\tau_k)}{2e^2N^2(\#\tau_\ell)^2}.
            \end{equation*}
   
			\item[(b)] 
            If $\frac{C\alpha}{2^{1/2}eN}\leq|f_{m,\tau_k}^\cal{B}(x)|$, then
            \begin{equation*}
                \sum_{\tau_\ell\prec\tau_k}|f_{m,\tau_\ell}^\cal{B}|^2(x)\leq \frac{C^2}{C^2-1}\left|\sum_{\tau_\ell\prec\tau_k}|f_{m,\tau_\ell}^\cal{B}|^2*\chi_{> R_\ell/R}^\vee(x)\right|.
            \end{equation*}
			
		\end{enumerate}
		
	\end{lemma}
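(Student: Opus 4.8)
The plan is to prove the two parts as a reasonably direct unwinding of the definitions of the pruning, the key input being that the functions $f_{m,\tau_\ell}^\cal{B}$ have been cut off at the scale where the averages $\fint_U\sum_{\theta\prec\tau_\ell}|f_{m,\theta}|^2$ drop below the threshold $\frac{\alpha^2}{2e^2(\#\tau_\ell)^2}$.

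For part (a), I would first note that each $f_{m,\tau_\ell}^\cal{B}$ is Fourier-supported in $\tau_\ell$ (by the Remark), so $|f_{m,\tau_\ell}^\cal{B}|^2$ has Fourier support in $\tau_\ell-\tau_\ell\subseteq B_{R_\ell/R}\times\cdots$; more precisely $|f_{m,\tau_\ell}^\cal{B}|^2*\chi^\vee_{\leq R_\ell/R}$ is the convolution of $|f_{m,\tau_\ell}^\cal{B}|^2$ with a normalized indicator of a ball whose inverse Fourier support spatially is a rectangle $U\in\cal{U}_{\tau_\ell}$, so this convolution equals the average $\cal{A}_U[|f_{m,\tau_\ell}^\cal{B}|^2]$ on each such $U$ (as in Lemma \ref{waveexp}(a)). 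Then, since $f_{m,\tau_\ell}^\cal{B}=f_{m,\tau_\ell}-f_{m-1,\tau_\ell}$ with $m-1\geq k$... wait, here $k<m$, so actually $m-1\geq k$ need not hold; instead I use that $|f_{m,\tau_\ell}^\cal{B}|\leq|f_{m,\tau_\ell}|$, and $f_{m,\tau_\ell}$ is obtained from $f_{m+1,\tau_\ell}$ by restricting to $\bigcup_{U\in\cal{G}_{\tau_m}}U$ — so on any $U\in\cal{U}_{\tau_\ell}$ that meets the support, the average $\fint_U|f_{m,\tau_\ell}|^2$ is controlled by the threshold defining $\cal{G}_{\tau_\ell}$ only if $\ell$ is the relevant pruning index. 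The clean statement is that $\fint_U|f_{m,\tau_\ell}^\cal{B}|^2\leq\fint_U|f_{m,\theta=\tau_\ell}|^2$ and, by the construction of $\cal{G}_{\tau_\ell}$ applied at stage $\ell$ (note $\ell\geq m$ so $f_{\ell+1,\cdot}\geq f_{m,\cdot}$... one must check the direction), each such average is at most $\frac{\alpha^2}{2e^2N^2(\#\tau_\ell)^2}$; wait, the factor $N^2$ appears only in $\cal{G}_\theta$, not in $\cal{G}_{\tau_k}$ — so I need to be careful which threshold governs. Assembling, summing the $\leq\#(\tau_\ell\prec\tau_k)$ many terms gives the bound with $(\#\tau_\ell\prec\tau_k)$ in the numerator.

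For part (b), the idea is a pigeonholing between the ``low'' and ``high'' frequency parts of $\sum_{\tau_\ell\prec\tau_k}|f_{m,\tau_\ell}^\cal{B}|^2$: write it as (low part)$+$(high part), where low part $=\sum_{\tau_\ell\prec\tau_k}|f_{m,\tau_\ell}^\cal{B}|^2*\chi^\vee_{\leq R_\ell/R}$. By the low lemma \ref{lowlemma} (or directly, as in part (a)), the low part is exactly $|f_{m,\tau_k}^\cal{B}|^2*\chi^\vee_{\leq R_\ell/R}$ — no wait, one needs $|\sum_{\tau_\ell\prec\tau_k}f_{m,\tau_\ell}^\cal{B}|^2=|f_{m,\tau_k}^\cal{B}|^2$ only after convolution smears out the cross terms; actually the low lemma says $(\sum_{\tau_\ell\prec\tau_k}|f_{m,\tau_\ell}^\cal{B}|^2)*\chi^\vee_{\leq R_k^{-1}}=\sum$ term-by-term, so I should use the corresponding statement at scale $R_\ell/R$. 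The point is: on the set where $|f_{m,\tau_k}^\cal{B}(x)|\geq\frac{C\alpha}{2^{1/2}eN}$, i.e. where $\sum_{\tau_\ell\prec\tau_k}|f_{m,\tau_\ell}^\cal{B}|^2(x)\geq\frac{C^2\alpha^2}{2e^2N^2}$, part (a) tells us that the low part is $\leq\frac{\alpha^2}{2e^2N^2}$ times a combinatorial ratio which I need to control by $1$ — so in fact $\sum_{\tau_\ell\prec\tau_k}|f_{m,\tau_\ell}^\cal{B}|^2(x)\geq\frac{C^2\alpha^2}{2e^2N^2}\geq C^2\cdot(\text{low part})$, hence (high part)$=$(total)$-$(low part)$\geq(1-C^{-2})\cdot$(total), which rearranges to the claimed inequality $\text{total}\leq\frac{C^2}{C^2-1}\cdot$(high part). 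The one subtlety I must resolve is the combinatorial normalization: part (a)'s bound carries a factor $\frac{\#(\tau_\ell\prec\tau_k)}{(\#\tau_\ell)^2}$, and I will need that $\#(\tau_\ell\prec\tau_k)\leq(\#\tau_\ell)^2/(\#\tau_\ell)=\#\tau_\ell$ or some such — more honestly, $\#(\tau_\ell\prec\tau_k)\leq\#\tau_\ell$ trivially, and the definition of $\#$ as ``number of $\tau$ of a given length with $f_\tau\not\equiv0$'' should give $\#(\tau_\ell\prec\tau_k)/(\#\tau_\ell)^2\leq 1/\#\tau_\ell\leq 1$, so the low part is genuinely $\leq\frac{\alpha^2}{2e^2N^2}$.

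The main obstacle I anticipate is bookkeeping the several nested pruning thresholds correctly: the base-case set $\cal{G}_\theta$ has the extra $N^2$ in the denominator ($2e^2N^2(\#\theta)^2$) while the inductive sets $\cal{G}_{\tau_k}$ have only $2e^2(\#\tau_k)^2$, and the functions $f_{m,\tau_\ell}^\cal{B}$ telescope between stages, so keeping track of which stage's threshold bounds $\fint_U|f_{m,\tau_\ell}^\cal{B}|^2$ (and getting the monotonicity $|f_{m,\theta}^\cal{B}|\leq|f_{\ell+1,\theta}|\leq|f_\theta|$ pointed the right way when $\ell\geq m$) is where a sign or index error would creep in. Once the correct threshold is identified, both parts are short: (a) is ``average over a box, invoke the defining inequality of the pruning, sum over $\leq\#(\tau_\ell\prec\tau_k)$ boxes,'' and (b) is the one-line low/high pigeonhole above.
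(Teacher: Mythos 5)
Your overall route is the paper's: for (a), identify the low part on each $U\in\cal{U}_{\tau_\ell}$ with a box average (the Fourier-support/constancy observation of Lemma \ref{waveexp}(a), the passage from $|f_{m,\tau_\ell}^{\cal{B}}|^2$ to the $\theta$-square function being the low lemma \ref{lowlemma}), then invoke the pruning threshold and sum over the $\#(\tau_\ell\prec\tau_k)$ boxes; for (b), your low/high pigeonhole is exactly the paper's argument (the paper phrases it as a contradiction, you rearrange directly), and the combinatorial input you isolate, $\#(\tau_\ell\prec\tau_k)\le\#\tau_\ell$, is the right one --- your intermediate line ``i.e.\ where $\sum_{\tau_\ell\prec\tau_k}|f_{m,\tau_\ell}^{\cal{B}}|^2(x)\ge C^2\alpha^2/(2e^2N^2)$'' drops the factor $\#(\tau_\ell\prec\tau_k)$ that Cauchy--Schwarz produces, but that factor reappears against the same factor in (a) and cancels, so (b) is essentially fine once written out.

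The genuine gap is in (a), at precisely the step you flag and then postpone as ``bookkeeping.'' Membership in $\cal{G}_{\tau_\ell}$ is a \emph{lower}-bound condition, so ``the construction of $\cal{G}_{\tau_\ell}$'' does not cap box averages: the defining inequality yields the upper bound $\fint_U\sum_{\theta\prec\tau_\ell}|f_{\ell+1,\theta}|^2\le\alpha^2/(2e^2N^2(\#\tau_\ell)^2)$ only for $U\in\cal{U}_{\tau_\ell}\setminus\cal{G}_{\tau_\ell}$, i.e.\ by \emph{negating} the membership condition. The paper's proof runs exactly through this: low lemma, constancy of the convolved square function on each $U$, the pruning monotonicity $|f_{m,\theta}^{\cal{B}}|\le|f_{m,\theta}|\le|f_{\ell+1,\theta}|$ (valid since $m\le\ell$), and then the negated $\cal{G}_{\tau_\ell}$ inequality on boxes outside $\cal{G}_{\tau_\ell}$; for the remaining boxes no upper bound comes from the definition at all, and one has to use instead where the pruned functions are supported (each $f_{m,\theta}^{\cal{B}}$ lives only on boxes surviving the stage-$\ge m$ prunings and failing the stage-$(m-1)$ one). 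Your sketch, as written, reads the threshold as a direct upper bound ``by construction,'' which is backwards, and you explicitly leave unresolved which stage's threshold governs and in which direction --- but that dichotomy (complement of $\cal{G}_{\tau_\ell}$ versus support of $f_{m,\theta}^{\cal{B}}$) is the actual content of part (a), not routine index-chasing, so the proposal does not yet amount to a proof of (a).
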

	\begin{proof}
		(a):  By the low lemma \ref{lowlemma},
        \begin{equation*}
            \sum_{\tau_\ell\prec\tau_k}|f_{m,\tau_\ell}^\cal{B}|^2*\chi_{\leq R_\ell/R}^\vee=\sum_{\tau_\ell\prec\tau_k}\sum_{\theta\prec\tau_\ell}|f_{m,\theta}^\cal{B}|^2*\chi_{\leq R_\ell/R}^\vee.
        \end{equation*}
        If $x\in U\in\cal{U}_{\tau_\ell}\setminus\cal{G}_{\tau_\ell}$, then a straightforward calculation supplies
        \begin{equation*}
            \sum_{\theta\prec\tau_\ell}|f_{m,\theta}^\cal{B}|^2*\chi_{\leq R_\ell/R}^\vee(x)=\fint_U\sum_{\theta\prec\tau_\ell}|f_{m,\theta}^\cal{B}|^2*\chi_{\leq R_\ell/R}^\vee.
        \end{equation*}
        By the pruning inequalities \eqref{ineq:pruning},
        \begin{equation*}
            \fint_U\sum_{\theta\prec\tau_\ell}|f_{m,\theta}^\cal{B}|^2*\chi_{\leq R_\ell/R}^\vee\leq\fint_U\sum_{\theta\prec\tau_\ell}|f_{\ell,\theta}^\cal{B}|^2*\chi_{\leq R_\ell/R}^\vee.
        \end{equation*}
        By the definition of $\cal{G}_{\tau_\ell}$,
        \begin{equation*}
            \fint_U\sum_{\theta\prec\tau_\ell}|f_{\ell,\theta}^\cal{B}|^2*\chi_{\leq R_\ell/R}^\vee\leq \frac{\alpha^2}{2e^2N^2(\#\tau_\ell)^2}.
        \end{equation*}
        Thus, summing over $\tau_\ell\prec\tau_k$, we arrive at the estimate
        \begin{equation*}
            \sum_{\tau_\ell\prec\tau_k}|f_{m,\tau_\ell}^\cal{B}|^2*\chi_{\leq R_\ell/R}^\vee\leq \frac{\alpha^2(\#\tau_\ell\prec\tau_k)}{2e^2N^2(\#\tau_\ell)^2}.
        \end{equation*}
		
		(b): The assumption and Cauchy--Schwarz give
        \begin{equation*}
            \frac{C^2\alpha^2}{2e^2N^2}\leq(\#\tau_\ell\prec\tau_k)\sum_{\tau_\ell\prec\tau_k}|f_{m,\tau_\ell}^\cal{B}|^2.
        \end{equation*}
        Assuming to the contrary that
        \begin{equation*}
            \sum_{\tau_\ell\prec\tau_k}|f_{m,\tau_\ell}^\cal{B}|^2(x)<C^2\left|\sum_{\tau_\ell\prec\tau_k}|f_{m,\tau_\ell}^\cal{B}|^2*\chi_{\leq R_\ell/R}^\vee(x)\right|,
        \end{equation*}
        we conclude from (a) that
        \begin{equation*}
            \frac{C^2\alpha^2}{2e^2N^2}<\frac{C^2\alpha^2N^2(\#\tau_\ell\prec\tau_k)^2}{2e^2(\#\tau_\ell)^2},
        \end{equation*}
        a contradiction. Thus we must have
        \begin{equation*}
            \sum_{\tau_\ell\prec\tau_k}|f_{m,\tau_\ell}^\cal{B}|^2(x)\leq \frac{C^2}{C^2-1}\left|\sum_{\tau_\ell\prec\tau_k}|f_{m,\tau_\ell}^\cal{B}|^2*\chi_{> R_\ell/R}^\vee(x)\right|.
        \end{equation*}
	\end{proof}    

    Finally, we use the above estimates to control the integrals of the $f_m^\cal{B}$ on broad sets by high-frequency integrals of the square functions. Define the $m$th $(1\leq m\leq N)$ broad sets in $U_\alpha$ to be as follows. Fix any $\tau_k,\tau_k'\prec\tau_{k-1}$ distinct, and write $\ell=\max(m-1,k)$. We define
	\begin{equation}\label{broadsetdef}
		\mathrm{Br}_\alpha^m(\tau_k,\tau_k')=\Big\{x\in B_R:(1-2^{-1/2}e^{-1}N^{-1})e^{-1}N^{-1}\alpha\leq |f_{m,\tau_{k-1}}^\cal{B}(x)|\leq \bf{p}N|f_{m,\tau_k}^{\cal{B}}(x)f_{m,\tau_k'}^\cal{B}(x)|^{1/2}\Big\}.
	\end{equation}
	
	\begin{proposition}[High domination of broad integrals]\label{broadc2} Let $N>2$. Let $1\leq k\leq m\leq N$. Suppose $\tau_k,\tau_k'\prec\tau_{k-1}$ are distinct. Write $\ell=\max(m-1,k)$. Then we have
        \begin{equation*}
            \int_{\mathrm{Br}_\alpha^m(\tau_k,\tau_k')}|f_{m,\tau_k}^\cal{B}f_{m,\tau_k'}^\cal{B}|^2\leq 6\bf q^{-2}\bf{p}^2\int_{\Q_p^2}\Big|\sum_{\tau_{\ell}\prec\tau_{k-1}}|f_{m,\tau_{\ell}}^{\cal{B}}|^2*\chi_{>R_{\ell}/R}^\vee\Big|^2.
        \end{equation*}
	\end{proposition}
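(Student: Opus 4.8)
The plan is to derive Proposition~\ref{broadc2} from a pointwise estimate on the broad set. Abbreviate the square function and its high part by
\[G(x):=\sum_{\tau_\ell\prec\tau_{k-1}}|f_{m,\tau_\ell}^{\cal{B}}(x)|^2,\qquad H(x):=\sum_{\tau_\ell\prec\tau_{k-1}}\big(|f_{m,\tau_\ell}^{\cal{B}}|^2*\chi_{>R_\ell/R}^\vee\big)(x),\]
so that the right-hand side of the proposition is $4\bf{p}^2\int_{\K^2}|H|^2$. Since $\mathrm{Br}_\alpha^m(\tau_k,\tau_k')\subseteq B_R$, it suffices to prove the pointwise bound
\[|f_{m,\tau_k}^{\cal{B}}(x)f_{m,\tau_k'}^{\cal{B}}(x)|^2\leq 4\bf{p}^2|H(x)|^2,\qquad x\in\mathrm{Br}_\alpha^m(\tau_k,\tau_k'),\]
and then integrate over $\mathrm{Br}_\alpha^m(\tau_k,\tau_k')$ and enlarge the domain of integration to $\K^2$.

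Fix $x\in\mathrm{Br}_\alpha^m(\tau_k,\tau_k')$. The estimate is obtained in two moves. First, the bilinear quantity is controlled by $G$: by AM--GM, $|f_{m,\tau_k}^{\cal{B}}(x)f_{m,\tau_k'}^{\cal{B}}(x)|\leq\tfrac12(|f_{m,\tau_k}^{\cal{B}}(x)|^2+|f_{m,\tau_k'}^{\cal{B}}(x)|^2)$. When $\ell=k$ (that is, $k=m$), both $\tau_k$ and $\tau_k'$ occur among the $\tau_\ell\prec\tau_{k-1}$, so the right-hand side is $\leq\tfrac12 G(x)$. When $\ell=m-1>k$, each of $f_{m,\tau_k}^{\cal{B}},f_{m,\tau_k'}^{\cal{B}}$ is the sum of the $f_{m,\tau_\ell}^{\cal{B}}$ over the $\tau_\ell$ it contains; here one uses the low lemma~\ref{lowlemma} --- in the form that, since distinct $\tau_\ell\prec\tau_k$ have first coordinates separated by more than $R_\ell^{-1}$, the off-diagonal terms of $|f_{m,\tau_k}^{\cal{B}}|^2$ are spectrally supported in $\K^2\setminus B_{R_\ell^{-1}}\subseteq\K^2\setminus B_{R_\ell/R}$ --- to replace the full bilinear product by its diagonal $\sum_{\tau_\ell\prec\tau_k}|f_{m,\tau_\ell}^{\cal{B}}|^2$ up to an error that is ``high'' at scale $R_\ell/R$; the broad hypothesis $|f_{m,\tau_{k-1}}^{\cal{B}}(x)|\leq\bf{p}N|f_{m,\tau_k}^{\cal{B}}(x)f_{m,\tau_k'}^{\cal{B}}(x)|^{1/2}$ forces the genuine size of the two factors to be comparable to $G(x)$, which absorbs the high error at the cost of a single power of $\bf{p}$ (the number of children of $\tau_{k-1}$). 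The conclusion of this move is $|f_{m,\tau_k}^{\cal{B}}(x)f_{m,\tau_k'}^{\cal{B}}(x)|\leq\tfrac{\bf{p}}{2}G(x)$.

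Second, $G$ is upgraded to $H$ on the broad set. The lower bound defining $\mathrm{Br}_\alpha^m(\tau_k,\tau_k')$ is exactly $\tfrac{C\alpha}{2^{1/2}eN}\leq|f_{m,\tau_{k-1}}^{\cal{B}}(x)|$ with $C=2^{1/2}(1-2^{-1/2}e^{-1}N^{-1})$, so part (b) of Lemma~\ref{wkhighdom}, applied with its parameter $k$ replaced by $k-1$ (and, when $k<m$, first at scale $m$ and then descended to scale $\ell=m-1$ via the low lemma, since Lemma~\ref{wkhighdom} is phrased for $\ell\geq m$), yields $G(x)\leq\tfrac{C^2}{C^2-1}|H(x)|$; a short computation gives $\tfrac{C^2}{C^2-1}\leq\tfrac52$ once $N\geq16$, which holds throughout the relevant regime. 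Combining the two moves, $|f_{m,\tau_k}^{\cal{B}}(x)f_{m,\tau_k'}^{\cal{B}}(x)|\leq\tfrac{\bf{p}}{2}\cdot\tfrac52|H(x)|=\tfrac54\bf{p}|H(x)|$ (and even $\leq\tfrac54|H(x)|$ when $k=m$), so in either case $|f_{m,\tau_k}^{\cal{B}}(x)f_{m,\tau_k'}^{\cal{B}}(x)|^2\leq\tfrac{25}{16}\bf{p}^2|H(x)|^2\leq 4\bf{p}^2|H(x)|^2$, which is the claimed pointwise bound.

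The main obstacle is the first move in the range $k<m$: there the ``bilinear'' caps $\tau_k,\tau_k'$ are coarser than the scale $\ell=m-1$ at which $G$ lives, so a crude Cauchy--Schwarz expansion of $|f_{m,\tau_k}^{\cal{B}}|^2$ into the $|f_{m,\tau_\ell}^{\cal{B}}|^2$ would cost an unacceptable factor $\bf{p}^{(m-1)-k}$. Keeping the loss down to $O(\bf{p})$ forces one to exploit the spectral separation of the sub-caps (the low lemma) to isolate the diagonal contribution and to use the broad hypothesis to dispose of the high off-diagonal error; a secondary, purely bookkeeping point is routing Lemma~\ref{wkhighdom}(b) --- stated for $\ell\geq m$ --- into a statement about the scale-$(m-1)$ square function, and checking that the powers of $N$ coming from the broad threshold cancel those from $\tfrac{C^2}{C^2-1}$, so that no $N$-dependence survives in the final constant $4\bf{p}^2$.
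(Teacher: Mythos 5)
Your overall architecture (pointwise bound on the broad set, then integrate and enlarge the domain) is not the paper's, and its first move contains a genuine gap: the claimed pointwise inequality $|f_{m,\tau_k}^{\cal B}(x)f_{m,\tau_k'}^{\cal B}(x)|\leq\tfrac{\bf p}{2}\sum_{\tau_\ell\prec\tau_{k-1}}|f_{m,\tau_\ell}^{\cal B}(x)|^2$ in the range $k<m$ (so $\ell=m-1>k$) is never actually derived, and it is false in general. The broad hypothesis \ref{broadsetdef} only gives a \emph{lower} bound on the bilinear product (relative to $\alpha$ and to $|f_{m,\tau_{k-1}}^{\cal B}(x)|$); it gives no upper bound for the product in terms of the scale-$\ell$ square function $G$. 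At a point where the $\sim\bf p^{\,m-1-k}$ children $f_{m,\tau_\ell}^{\cal B}$, $\tau_\ell\prec\tau_k$, interfere constructively (and likewise inside $\tau_k'$), one has $|f_{m,\tau_k}^{\cal B}f_{m,\tau_k'}^{\cal B}|\sim\bf p^{\,2(m-1-k)}a^2$ while $G\sim\bf p^{\,m-k}a^2$, so the ratio is $\sim\bf p^{\,m-2-k}$, which exceeds $\bf p$ once $m-k\geq3$; and such points are perfectly compatible with (indeed favored by) membership in $\mathrm{Br}_\alpha^m(\tau_k,\tau_k')$, since its defining conditions only ask the bilinear product to be large. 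Your appeal to the low lemma --- that the off-diagonal part of $|f_{m,\tau_k}^{\cal B}|^2$ is spectrally high --- controls that error only in $L^2$ (that is exactly what the high lemma \ref{highlemma} does); a function with high-frequency Fourier support is not pointwise small, so there is no mechanism by which the broad hypothesis ``absorbs the high error'' pointwise at the cost of a single $\bf p$. This is why no purely pointwise argument can reach the stated constant here.

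The paper's proof works at the level of integrals precisely to exploit this orthogonality: it first applies the local bilinear restriction estimate, Theorem \ref{locbilres} (the separation of $\tau_k,\tau_k'$ inside $\tau_{k-1}$ supplies the transversality, and $\bf p\geq|2|^{-1}$ yields the factor $\bf p^2$), to bound $\int_{\mathrm{Br}}|f_{m,\tau_k}^{\cal B}f_{m,\tau_k'}^{\cal B}|^2$ by $\bf p^2$ times the integral of $\big(\sum_{\tau_\ell\prec\tau_{k-1}}|f_{m,\tau_\ell}^{\cal B}|^2\big)^2$ over the broad set fattened by $B(0,R_\ell)$; only then does it use the lower bound on $|f_{m,\tau_{k-1}}^{\cal B}|$ together with Lemma \ref{wkhighdom}(b) pointwise, and the local constancy of both sides at scale $R_\ell$, to insert the high-pass projection $\chi_{>R_\ell/R}^\vee$ at the cost of the factor $4$. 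Your second move (the use of \ref{wkhighdom}(b) and the arithmetic for $C^2/(C^2-1)$) matches that last step, and your pointwise AM--GM shortcut does happen to work in the single case $k=m$ where $\tau_k,\tau_k'$ are already at scale $\ell$; but without the bilinear restriction step the case $k<m$, which is the substantive one, remains unproved.
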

	\begin{proof}

        By local bilinear restriction \ref{locbilres}, using the fact that $\bf{p}\geq|2|^{-1}$, in either case $k\leq m-1$ or $k\geq m$, we achieve the bound
        \begin{equation*}
            \int_{\operatorname{Br}_\alpha^m(\tau_k,\tau_k')}|f_{m,\tau_k}^\mathcal{B}f_{m,\tau_k'}^\mathcal{B}|^2\leq \bf q^{-2}\bf{p}^2\int_{\operatorname{Br}_\alpha^m(\tau_k,\tau_k')+B(0,R_{\ell})}\Big|\sum_{\tau_{\ell}\prec\tau_{k-1}}|f_{m,\tau_{\ell}}^{\cal{B}}|^2\Big|^2.
        \end{equation*}
        
        For each $x\in\mathrm{Br}_\alpha^m(\tau_k,\tau_k')$, we have that $|f_{m,\tau_{k-1}}^\cal{B}(x)|\geq 0.915\cdot e^{-1}N^{-1}\alpha$. Thus, by the weak high-domination lemma \ref{wkhighdom},
        \begin{equation*}
            \sum_{\tau_{\ell}\prec\tau_{k-1}}|f_{m,\tau_{\ell}}^{\mathcal{B}}|^2(x)\leq 2.5\Big|\sum_{\tau_{\ell}\prec\tau_{k-1}}|f_{m,\tau_{\ell}}^{\mathcal{B}}|^2*\chi_{> R_{\ell}/R}^\vee(x)\Big|.
        \end{equation*}
        On the other hand, both sides are constant at scale $R_{\ell}$, so
        \begin{equation*}
            \begin{split}
            \int_{\operatorname{Br}_\alpha^m(\tau_k,\tau_k')+B(0,R_{\ell})}&\Big|\sum_{\tau_{\ell}\prec\tau_{k-1}}|f_{m,\tau_{\ell}}^{\cal{B}}|^2\Big|^2\\
            &\leq 6\int_{\operatorname{Br}_\alpha^m(\tau_k,\tau_k')+B(0,R_{\ell})}\Big|\sum_{\tau_{\ell}\prec\tau_{k-1}}|f_{m,\tau_{\ell}}^{\cal{B}}|^2*\chi_{>R_{\ell}/R}^\vee\Big|^2.
            \end{split}
        \end{equation*}
		
	\end{proof}

    \begin{remark}
        In the above proposition, we see the appearance of the particular constant used in the choice of pruning. It follows that, if the assumptions on the broad sets are weakened, one may correspondingly strengthen the assumption on the good envelopes $\cal{G}_{\tau_k}$, and improve the strengths of Theorems \ref{squarefunction} and \ref{smallcap}.
    \end{remark}
	
	\subsection{Broad/narrow analysis}\label{bn_sub}
	
	Combining Prop.'s \ref{broadc1} and \ref{broadc2}, we produced the desired bounds on the subset of the superlevel set for which $f$ is sufficiently broad at some scale. In this subsection, we perform a broad/narrow analysis to produced the desired wave envelope estimate in each cube of sidelength $R$.

    More precisely, this section is dedicated to establishing the following proposition.
	\begin{proposition}[Local wave envelope estimate]\label{broadnarrow} Suppose $N>6$. For each cube $B_R$ of sidelength $R$ and each $\alpha>0$,
		\begin{equation*}
			\alpha^4\mu\Big(\{x\in B_R:|f(x)|>\alpha\}\Big)\leq 20\bf q^{-3}\bf{p}^{10}N^{10}\sum_{0\leq s\leq N}\sum_{\tau_s}\sum_{U\in\mathcal{G}_{\tau_s}}\mu(U)\left(\fint_U\sum_{\theta\prec\tau_s}|f_{\theta}|^2\right)^2.
		\end{equation*}
		
	\end{proposition}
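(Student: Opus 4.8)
The strategy is to combine the three families of estimates already assembled: the trivial contribution $U_\alpha^0$ handled by Proposition \ref{broadc1}, the broad contributions controlled by Proposition \ref{broadc2} feeding into Proposition \ref{lasthighest}, and the narrow pieces processed by a logarithmic iteration of the broad/narrow inequality. Recall the decomposition \eqref{pruningchoice}: $U_\alpha\subseteq V_\alpha\subseteq U_\alpha^0\cup\bigcup_{m=1}^N U_\alpha^m$, so it suffices to bound $\alpha^4\mu(U_\alpha^0)$ and each $\alpha^4\mu(U_\alpha^m)$ by the right-hand side (with appropriate constants that sum to the stated $12(1+N^{-5})^{10}\bf{p}^{12}N^{10}$). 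The $m=0$ case is Proposition \ref{broadc1} verbatim. For $1\le m\le N$, the point of $U_\alpha^m$ is that on it $|f_m^\cal{B}(x)|\gtrsim N^{-1}(1+N^{-5})^{-1}\alpha$, so it is enough to bound $\alpha^4\mu(U_\alpha^m)$ by (a power of $N$ times) $\int |f_m^\cal{B}|^4$ restricted to a broad piece, plus narrow pieces.

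First I would fix $m$ and run the standard broad/narrow dichotomy on $f_m^\cal{B}=\sum_{\theta}f_{m,\theta}^\cal{B}$, organized by the cap hierarchy $\tau_N\prec\cdots\prec\tau_0$. At each scale $k$ (descending from $k=0$), a point $x\in U_\alpha^m$ is declared \emph{$k$-broad} if there are two well-separated children $\tau_k,\tau_k'\prec\tau_{k-1}$ with $|f_{m,\tau_{k-1}}^\cal{B}(x)|\lesssim \bf{p}N|f_{m,\tau_k}^\cal{B}(x)f_{m,\tau_k'}^\cal{B}(x)|^{1/2}$ — i.e. $x$ lands in some $\mathrm{Br}_\alpha^m(\tau_k,\tau_k')$ — and \emph{$k$-narrow} otherwise, in which case $|f_{m,\tau_{k-1}}^\cal{B}(x)|$ is controlled (up to the factor $\bf{p}N$ and the number of children, which is $O(\bf{p})$) by $\max_{\tau_k}|f_{m,\tau_k}^\cal{B}(x)|$. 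Iterating the narrow alternative from $k=1$ down to $k=N$ costs a factor that is polynomial in $\bf{p}$ at each of the $\le N$ steps; pigeonholing over $m$ and the scale $k$ at which broadness first occurs (or over reaching the finest scale $k=N$, where a single cap $\tau_N=\theta$ dominates and one may use the pruning directly as in Proposition \ref{broadc1}), one reduces to estimating, for some $k\le m$ and some pair $\tau_k,\tau_k'\prec\tau_{k-1}$,
\begin{equation*}
\alpha^4\mu\big(\mathrm{Br}_\alpha^m(\tau_k,\tau_k')\big)\lesssim (\bf{p}N)^{O(1)}\int_{\mathrm{Br}_\alpha^m(\tau_k,\tau_k')}|f_{m,\tau_k}^\cal{B}f_{m,\tau_k'}^\cal{B}|^2,
\end{equation*}
using $|f_{m,\tau_{k-1}}^\cal{B}|\gtrsim N^{-1}\alpha$ on the broad set and the defining inequality in \eqref{broadsetdef}.

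Now I would chain the already-proved estimates. Proposition \ref{broadc2} converts the right-hand side into $4\bf{p}^2\int|\sum_{\tau_\ell\prec\tau_{k-1}}|f_{m,\tau_\ell}^\cal{B}|^2*\chi_{>R_\ell/R}^\vee|^2$ with $\ell=\max(m-1,k)$; summing over the (few) choices of $\tau_k,\tau_k',\tau_{k-1}$ and enlarging the sum to all of $\tau_\ell$ gives $\lesssim \bf{p}^{O(1)}\int|\sum_{\tau_\ell}|f_{m,\tau_\ell}^\cal{B}|^2*\chi_{>R_\ell/R}^\vee|^2$. Then Proposition \ref{lasthighest}, applied to $f_m^\cal{B}$ (legitimate since it has Fourier support in $\cal{N}_{R^{-1}}(\bb{P}^1)$), bounds this by $3\bf{p}^4 N\sum_{s=\ell}^N\sum_{\tau_s}\int|\sum_{\theta\prec\tau_s}|f_{m,\theta}^\cal{B}|^2*\chi_{\le R_s/R}^\vee|^2$, and finally Lemma \ref{waveexp}(a),(b) — here using $s\ge\ell\ge m-1$, hence essentially $s\ge m$ after accounting for the boundary index — turns each summand into $\sum_{U\in\cal{G}_{\tau_s}}\mu(U)(\fint_U\sum_{\theta\prec\tau_s}|f_\theta|^2)^2$, which is exactly the target. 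Tracking constants: Proposition \ref{broadc1} gives $4N^{10}$; the broad branch gives roughly $4\cdot 4\bf{p}^2\cdot 3\bf{p}^4 N$ times the $\bf{p}^{O(1)}N^{O(1)}$ accumulated in the narrow iteration and the $(1+N^{-5})$'s from \eqref{pruningchoice}, and these are arranged to fit under $12(1+N^{-5})^{10}\bf{p}^{12}N^{10}$.

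\textbf{Main obstacle.} The delicate point is the bookkeeping in the narrow iteration: one must verify that peeling off the narrow alternative scale-by-scale really costs only $O(\bf{p})$ per scale (so $\bf{p}^{O(1)}N$ total after pigeonholing, not $\bf{p}^N$), and that when the iteration bottoms out at $k=N$ the surviving single-cap term is genuinely of the form handled by the $m=0$ analysis rather than requiring a separate argument. Equally, one has to be careful that the index shift between $\ell=\max(m-1,k)$ in Proposition \ref{broadc2} and the hypothesis $k\ge m$ in Lemma \ref{waveexp}(b) does no harm — i.e. that the case $k=m-1$ (where $\ell=m-1<m$) still produces wave-envelope terms at scales $s\ge m-1$ that can be absorbed, perhaps by noting $\cal{G}_{\tau_{m-1}}$ and the pruning at level $m-1$ control the level-$m$ data up to the universal constant already built into the pruning. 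Keeping every constant explicit through this chain, so that the final bound is literally $\le 12(1+N^{-5})^{10}\bf{p}^{12}N^{10}$ and not merely $O_{\bf{p}}(N^{10})$, is where the real care lies.
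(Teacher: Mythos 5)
Your overall route is the paper's: the decomposition \eqref{pruningchoice} into $U_\alpha^0\cup\bigcup_m U_\alpha^m$, Proposition \ref{broadc1} for $m=0$, the lower bound $|f_m^{\cal B}|\gtrsim N^{-1}\alpha$ on $U_\alpha^m$, a scale-by-scale broad/narrow dichotomy, and the chain \ref{broadc2} $\to$ \ref{lasthighest} $\to$ \ref{waveexp} on the broad pieces. But there is a genuine gap at the heart of the narrow iteration. You assert that at a narrow point the parent is controlled by $\max_{\tau_k\prec\tau_{k-1}}|f^{\cal B}_{m,\tau_k}|$ only ``up to the factor $\mathbf{p}N$ and the number of children, which is $O(\mathbf{p})$,'' and you hope that pigeonholing over the first broad scale keeps the total loss at $\mathbf{p}^{O(1)}N$. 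That cannot work: the narrow losses multiply along every scale preceding the first broad one, so an $O(\mathbf{p})$-per-scale narrow cost already gives $\mathbf{p}^{k-1}$ for a point that stays narrow down to scale $k$, i.e. up to $\mathbf{p}^{N}\sim R^{1/2}$ in the worst case; pigeonholing over $k$ or $m$ only selects a term of an existing sum and does nothing against this multiplicative accumulation. The missing idea is precisely the content of the paper's narrow lemma \ref{narrow}: the dichotomy is either broad, with the factor $\mathbf{p}N$ incurred \emph{once} at the terminating scale (this is where $\#(\tau_k\prec\tau_{k-1})\le\mathbf{p}$ and the $(1-(1+\frac1{N-1})^{-1})^{-1}=N$ enter), or narrow, with the parent dominated by $(1+\frac{1}{N-1})\max_{\tau_k\prec\tau_{k-1}}|f^{\cal B}_{m,\tau_k}|$ and \emph{no} factor of $\mathbf{p}$ at all. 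Combined with the trivial $\ell^4$ embedding over children (Lemmas \ref{broaddec1} and \ref{broaddec2}), the cumulative narrow cost over all $N$ scales is $(1+\frac{1}{N-1})^{4N}=O(1)$, and the stated constant $12(1+N^{-5})^{10}\mathbf{p}^{12}N^{10}$ then comes from the single broad factor $(\mathbf{p}N)^4$, the $4\mathbf{p}^2$ of Proposition \ref{broadc2}, the $3\mathbf{p}^4N$ of Proposition \ref{lasthighest}, and the sums over $k$ and $m$ --- not from the narrow branch. As written, your plan would only yield a bound with an extra factor exponential in $N$, i.e. polynomial in $R$.

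Two smaller points you flag but leave open are resolved in the paper differently from what you sketch: when the iteration bottoms out at the finest scale, the surviving term is not rerun through the $m=0$ analysis but is handled by Lemma \ref{narrowbound}, using the pruning inequality $|f^{\cal B}_{m,\theta}|\le|f_{N,\theta}|$ together with the fact that $f_{N,\theta}$ is supported on the boxes of $\cal G_\theta$ and $|f_\theta|$ is constant on them; and the index issue between $\ell=\max(m-1,k)$ and the hypothesis of Lemma \ref{waveexp}(b) is absorbed into the statement of Lemma \ref{broadbound2}, where the wave-envelope sum runs over $m\le s\le N$. Neither of these is fatal to your outline, but the narrow-cost issue above is, until you replace the $O(\mathbf{p})$ per-scale loss by the $(1+\frac1{N-1})$ loss of Lemma \ref{narrow}.
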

    
	\begin{lemma}[Narrow lemma]\label{narrow} Suppose $1\leq k\leq N$ and $\tau_{k-1}$ is arbitrary. Then, for each $x$, either
		\begin{equation}\label{broadest}
			|f_{m,\tau_{k-1}}^\mathcal{B}(x)|\leq \bf{p}N\max_{\substack{\tau_{k}\neq\tau_{k}'\\\tau_{k},\tau_{k}'\prec\tau_{k-1}}}|f_{m,\tau_{k}}^\mathcal{B}(x)f_{m,\tau_{k}'}^\mathcal{B}(x)|^{1/2}
		\end{equation}
		or
		\begin{equation}\label{narrowest}
			|f_{m,\tau_{k-1}}^\mathcal{B}(x)|\leq\left(1+\frac{1}{N-1}\right)\max_{\tau_{k}\prec\tau_{k-1}}|f_{m,\tau_{k}}^\mathcal{B}(x)|.
		\end{equation}
		
	\end{lemma}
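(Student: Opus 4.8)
The statement is a pure pointwise dichotomy for the single function $f_{m,\tau_{k-1}}^{\cal B}$, with no measure-theoretic or Fourier content, so the plan is entirely elementary. Fix $x$ and fix the child cap $\tau_k^{(1)}\prec\tau_{k-1}$ that maximizes $|f_{m,\tau_k}^{\cal B}(x)|$ over $\tau_k\prec\tau_{k-1}$; call this maximal value $A$, so $A=\max_{\tau_k\prec\tau_{k-1}}|f_{m,\tau_k}^{\cal B}(x)|$. Let $B=\max_{\tau_k\neq\tau_k',\ \tau_k,\tau_k'\prec\tau_{k-1}}|f_{m,\tau_k}^{\cal B}(x)f_{m,\tau_k'}^{\cal B}(x)|^{1/2}$ be the ``broad'' quantity. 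The two alternatives to be proved are $|f_{m,\tau_{k-1}}^{\cal B}(x)|\leq\mathbf{p}N\,B$ and $|f_{m,\tau_{k-1}}^{\cal B}(x)|\leq(1+\tfrac{1}{N-1})A$.

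The argument splits on the size of the second-largest value. Precisely, suppose the narrow alternative \eqref{narrowest} fails, i.e. $|f_{m,\tau_{k-1}}^{\cal B}(x)|>(1+\tfrac{1}{N-1})A$. First I would use the triangle inequality $|f_{m,\tau_{k-1}}^{\cal B}(x)|\leq\sum_{\tau_k\prec\tau_{k-1}}|f_{m,\tau_k}^{\cal B}(x)|$ — here one needs that $f_{m,\tau_{k-1}}^{\cal B}=\sum_{\tau_k\prec\tau_{k-1}}f_{m,\tau_k}^{\cal B}$, which follows from the definitions of the pruning and of $f_{m,\tau}^{\cal B}=f_{m,\tau}-f_{m-1,\tau}$ being additive over the sub-caps. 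The number of children $\tau_k\prec\tau_{k-1}$ is at most $\mathbf{p}$ (a ball of radius $R_{k-1}^{-1}=\mathbf{p}^{-(k-1)}$ splits into $\mathbf{p}$ balls of radius $\mathbf{p}^{-k}$), so $\sum_{\tau_k}|f_{m,\tau_k}^{\cal B}(x)|\leq\mathbf{p}A$. Thus, using the failure of \eqref{narrowest}, the largest term $A$ already satisfies $A>(1+\tfrac1{N-1})^{-1}|f_{m,\tau_{k-1}}^{\cal B}(x)|$; and if the \emph{second}-largest term, call it $A'$, were too small — say $A'<\tfrac{1}{(N-1)\mathbf{p}}|f_{m,\tau_{k-1}}^{\cal B}(x)|$ or a similar threshold — then $|f_{m,\tau_{k-1}}^{\cal B}(x)|\leq A+(\mathbf{p}-1)A'\leq A+\tfrac{\mathbf{p}-1}{(N-1)\mathbf{p}}|f_{m,\tau_{k-1}}^{\cal B}(x)|<A+\tfrac{1}{N-1}|f_{m,\tau_{k-1}}^{\cal B}(x)|$, forcing $(1-\tfrac1{N-1})|f_{m,\tau_{k-1}}^{\cal B}(x)|<A$, i.e. $|f_{m,\tau_{k-1}}^{\cal B}(x)|<\tfrac{N-1}{N-2}A\le(1+\tfrac{1}{N-1})A$ after adjusting constants — contradicting the failure of \eqref{narrowest} (one should check the constants line up; there is some slack since the claimed narrow constant $1+\tfrac{1}{N-1}$ is quite generous). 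Hence the second-largest value $A'$ is comparable to $|f_{m,\tau_{k-1}}^{\cal B}(x)|$ up to a factor $O(\mathbf{p}N)$, and then $B\geq (AA')^{1/2}\geq\big(\tfrac{1}{(N-1)\mathbf p}\big)^{1/2}(1+\tfrac1{N-1})^{-1/2}|f_{m,\tau_{k-1}}^{\cal B}(x)|$, which after bounding gives $|f_{m,\tau_{k-1}}^{\cal B}(x)|\leq\mathbf{p}N\,B$, the broad alternative \eqref{broadest}.

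The only mild subtlety — and the step I expect to require the most care — is bookkeeping the numerical constants so that the threshold used to define ``second-largest term too small'' is exactly compatible with both the factor $\mathbf{p}N$ in \eqref{broadest} and the factor $1+\tfrac{1}{N-1}$ in \eqref{narrowest}; since $N=\tfrac12\log_{\mathbf p}R\ge 1$ and there is a full power of $\mathbf pN$ to spare in the broad case, there is ample room, but one must write the inequality chain cleanly. A second point to verify at the outset is the additivity $f_{m,\tau_{k-1}}^{\cal B}=\sum_{\tau_k\prec\tau_{k-1}}f_{m,\tau_k}^{\cal B}$ and the bound $\#\{\tau_k:\tau_k\prec\tau_{k-1}\}\leq\mathbf p$, both of which are immediate from the construction of the caps and the pruning in Section \ref{broad_sub}. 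With these in hand the lemma follows from the elementary two-case split above.
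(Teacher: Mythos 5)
Your strategy is essentially the paper's: isolate the maximizing child, use $f_{m,\tau_{k-1}}^{\mathcal B}=\sum_{\tau_k\prec\tau_{k-1}}f_{m,\tau_k}^{\mathcal B}$ together with $\#\{\tau_k\prec\tau_{k-1}\}\leq\mathbf{p}$, deduce that when \eqref{narrowest} fails the second-largest child value cannot be small, and then pair it with the largest via the geometric mean to get \eqref{broadest}. However, two of your intermediate inequalities are stated incorrectly (both repairable, and with no new ideas needed). Write $F=|f_{m,\tau_{k-1}}^{\mathcal B}(x)|$. First, failure of \eqref{narrowest} gives $A<(1+\tfrac{1}{N-1})^{-1}F$, not $A>(1+\tfrac{1}{N-1})^{-1}F$ as you assert; so your final chain, which uses that false lower bound on $A$, should instead use $A\geq A'$ (or $A\geq\mathbf{p}^{-1}F$ from the triangle inequality), which still suffices since then $B\geq(AA')^{1/2}\geq A'$. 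Second, with your threshold $A'<\tfrac{1}{(N-1)\mathbf{p}}F$ the conclusion you reach is $F<\tfrac{N-1}{N-2}A$, and $\tfrac{N-1}{N-2}=1+\tfrac{1}{N-2}>1+\tfrac{1}{N-1}$, so as written there is no contradiction with the failure of \eqref{narrowest}; the constants do not ``line up'' at that threshold. The clean way (which is what the paper does) avoids the case split on $A'$ entirely: if \eqref{narrowest} fails, then $\mathbf{p}A'\geq F-A>\big(1-(1+\tfrac{1}{N-1})^{-1}\big)F=\tfrac{1}{N}F$, hence $A'>\tfrac{F}{\mathbf{p}N}$, and since $A\geq A'$ one gets $B\geq(AA')^{1/2}\geq A'>\tfrac{F}{\mathbf{p}N}$, which is exactly \eqref{broadest}. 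With these corrections your argument coincides with the paper's proof.
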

	\begin{proof}
		Fix $\tau_{k}'\prec\tau_{k-1}$ which realizes the maximum
		\begin{equation*}
			|f_{m,\tau_{k}'}^\mathcal{B}(x)|=\max_{\tau_{k}\prec\tau_{k-1}}|f_{m,\tau_{k}}^\mathcal{B}(x)|.
		\end{equation*}
		Suppose \eqref{narrowest} fails. Then, since $f_{m,\tau_{k-1}}^\mathcal{B}(x)=\sum_{\tau_{k}\prec\tau_{k-1}}f_{m,\tau_{k}}^\mathcal{B}(x)$, we have the inequality
		\begin{equation*}
			|f_{m,\tau_{k-1}}^\mathcal{B}(x)-\sum_{\tau_{k}\neq\tau_{k}'}f_{m,\tau_{k}}^\mathcal{B}(x)|<\left(1+\frac{1}{N-1}\right)^{-1}|f_{m,\tau_{k-1}}^\mathcal{B}(x)|.
		\end{equation*}
		On the other hand,
		\begin{equation*}
			|f_{m,\tau_{k-1}}^\mathcal{B}(x)-\sum_{\tau_{k}\neq\tau_{k}'}f_{m,\tau_{k}}^\mathcal{B}(x)|\geq|f_{m,\tau_{k-1}}^\mathcal{B}(x)|-(\#\tau_{k}\prec\tau_{k-1})\max_{\tau_{k}\neq\tau_{k}'}|f_{m,\tau_{k}}^\mathcal{B}(x)|;
		\end{equation*}
		thus
		\begin{equation*}
			(\#\tau_{k}\prec\tau_{k-1})\max_{\tau_{k}\neq\tau_{k}'}|f_{m,\tau_{k}}^\mathcal{B}(x)|>\left(1-\left(1+\frac{1}{N-1}\right)^{-1}\right)|f_{m,\tau_{k-1}}^\mathcal{B}(x)|.
		\end{equation*}
		Relating the above to \eqref{broadest}, for each $\tau_{k}\prec\tau_{k-1}$,
		\begin{equation*}
			|f_{m,\tau_{k}}^\mathcal{B}(x)|\leq|f_{m,\tau_{k}}^\mathcal{B}(x)f_{m,\tau_{k}'}^\mathcal{B}(x)|^{1/2},
		\end{equation*}
		and thus
		\begin{equation*}
			|f_{m,\tau_{k-1}}^\mathcal{B}(x)|<(\#\tau_{k})\left(1-\left(1+\frac{1}{N-1}\right)^{-1}\right)^{-1}\max_{\tau_{k}\neq\tau_{k}'}|f_{m,\tau_{k}}^\mathcal{B}(x)f_{m,\tau_{k}'}^\mathcal{B}(x)|^{1/2}.
		\end{equation*}
		The conclusion follows from the identities
		\begin{equation*}
			\left(1-\left(1+\frac{1}{N-1}\right)^{-1}\right)^{-1}=N
		\end{equation*}
		and
		\begin{equation*}
			(\#\tau_{k}\prec\tau_{k-1})\leq\bf{p}.
		\end{equation*}
		
	\end{proof}
	
	We wish to use this to divide the integral of $|f_m^\mathcal{B}|^4$ into broad and narrow parts, with a small constant on narrow parts. For the narrow component, we wish to relate $\int |f_m^\mathcal{B}|^4$ to $\sum_\tau\int|f_{m,\tau}^\mathcal{B}|^4$, so that we may further decompose each $f_{m,\tau}^\mathcal{B}$ into broad and narrow components and proceed inductively.
	
	\begin{definition} We define $\text{Broad}_{1,m}$ to be the set
		\begin{equation*}
			\text{Broad}_{1,m}=\left\{x\in U_\alpha^m:\,|f_m^\mathcal{B}(x)|\leq \bf{p}N\max_{\tau_1\neq \tau_1'}|f_{m,\tau_1}^\mathcal{B}(x)f_{m,\tau_1'}^\mathcal{B}(x)|^{1/2}\right\}.
		\end{equation*}
		The complementary set $\text{Narrow}_{1,m}$ is defined as $U_\alpha^m\setminus\text{Broad}_{1,m}$.
	\end{definition}
	
	\begin{lemma}[Decoupling the narrow part ($k=1$)]\label{broaddec1} It holds that
		\begin{equation*}
			\int_{\mathrm{Narrow}_{1,m}}|f_{m}^\mathcal{B}|^4\leq\left(1+\frac{1}{N-1}\right)^4\sum_{\tau_1}\int_{\mathrm{Narrow}_{1,m}}|f_{m,\tau_1}^\mathcal{B}|^4.
		\end{equation*}
	\end{lemma}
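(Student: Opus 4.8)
The plan is to derive this from the narrow lemma (Lemma \ref{narrow}) applied at scale $k=1$, exactly in the spirit of the $L^4$ square-function bookkeeping for nested caps. On the set $\mathrm{Narrow}_{1,m}$, by definition, the broad inequality \eqref{broadest} fails (otherwise $x$ would lie in $\mathrm{Broad}_{1,m}$), so Lemma \ref{narrow} with $\tau_0$ the unique cap of diameter $1$ forces the narrow inequality \eqref{narrowest} to hold pointwise, i.e.
\begin{equation*}
	|f_m^\mathcal{B}(x)|\leq\left(1+\frac{1}{N-1}\right)\max_{\tau_1}|f_{m,\tau_1}^\mathcal{B}(x)|
\end{equation*}
for every $x\in\mathrm{Narrow}_{1,m}$.

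Next I would raise both sides to the fourth power and bound the maximum by the $\ell^4$-sum: $\max_{\tau_1}|f_{m,\tau_1}^\mathcal{B}(x)|^4\leq\sum_{\tau_1}|f_{m,\tau_1}^\mathcal{B}(x)|^4$. This gives, for each $x\in\mathrm{Narrow}_{1,m}$,
\begin{equation*}
	|f_m^\mathcal{B}(x)|^4\leq\left(1+\frac{1}{N-1}\right)^4\sum_{\tau_1}|f_{m,\tau_1}^\mathcal{B}(x)|^4.
\end{equation*}
Integrating this pointwise inequality over $\mathrm{Narrow}_{1,m}$ and pulling the constant $(1+\frac{1}{N-1})^4$ outside the sum yields precisely the claimed estimate.

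There is essentially no obstacle here; the only point requiring a word of care is the logical step that "not broad" implies "narrow." One must observe that the hypothesis of Lemma \ref{narrow} is unconditional — for \emph{every} $x$ at least one of \eqref{broadest}, \eqref{narrowest} holds — so that on $\mathrm{Narrow}_{1,m}=U_\alpha^m\setminus\mathrm{Broad}_{1,m}$, where \eqref{broadest} is violated, \eqref{narrowest} must hold. The passage from a maximum to an $\ell^4$-sum is the usual trivial bound (indeed one could even keep the sharper bound with a single term, but the stated form is what is needed downstream for the inductive broad/narrow iteration). Thus the proof is a two-line application of Lemma \ref{narrow} followed by integration.
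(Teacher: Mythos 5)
Your proposal is correct and matches the paper's own argument: on $\mathrm{Narrow}_{1,m}$ the dichotomy of Lemma \ref{narrow} (applied with $\tau_0$ the unit-scale cap, so $f_{m,\tau_0}^{\cal B}=f_m^{\cal B}$) forces the narrow alternative, and the paper likewise bounds the resulting single maximizing term by the $\ell^4$-sum over $\tau_1$ before raising to the fourth power and integrating. No gaps.
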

	\begin{proof}
		
		If $x\in\text{Narrow}_{1,m}$, then by the narrow lemma \ref{narrow}
		\begin{equation*}
			|f_m^\mathcal{B}(x)|\leq\left(1+\frac{1}{N-1}\right)|f_{m,\tau_1}^\mathcal{B}(x)|
		\end{equation*}
		for a suitable $\tau_1$. Thus,
		\begin{equation*}
			|f_m^\mathcal{B}(x)|\leq\left(1+\frac{1}{N-1}\right)\left(\sum_{\tau_1}|f_{m,\tau_1}^\mathcal{B}(x)|^4\right)^{1/4},
		\end{equation*}
		and hence
		\begin{equation*}
			\int_{\mathrm{Narrow}_{1,m}}|f_{m}^\mathcal{B}|^4\leq\left(1+\frac{1}{N-1}\right)^4\sum_{\tau_1}\int_{\mathrm{Narrow}_{1,m}}|f_{m,\tau_1}^\mathcal{B}|^4.
		\end{equation*}
		
	\end{proof}
	
	\begin{definition}
		Write, for each $\tau_1$,
		\begin{equation*}
			\text{Broad}_{2,m}(\tau_1):=\left\{x\in\text{Narrow}_{1,m}:|f_{m,\tau_1}^\mathcal{B}(x)|\leq\bf{p}N\max_{\substack{\tau_2\neq\tau_2'\\\tau_2,\tau_2'\prec\tau_1}}|f_{m,\tau_2}^\mathcal{B}(x)f_{m,\tau_2'}^\mathcal{B}(x)|^{1/2}\right\}.
		\end{equation*}
		Write also $\text{Narrow}_{2,m}(\tau_1):=\text{Narrow}_{1,m}\setminus\text{Broad}_{2,m}(\tau_1)$.
	\end{definition}
	
	\begin{definition}
		Let $2\leq k<N$. Suppose $\tau_k\prec\tau_{k-1}$. We inductively write
		\begin{equation*}
			\begin{split}
				\text{Broad}_{k+1,m}(\tau_k):=\left\{x\in\text{Narrow}_{k,m}(\tau_{k-1}):|f_{m,\tau_{k}}^\mathcal{B}(x)|\leq \bf{p}N\max_{\substack{\tau_{k+1}\neq\tau_{k+1}'\\\tau_{k+1},\tau_{k+1}'\prec\tau_{k}}}|f_{m,\tau_{k+1}}^\mathcal{B}(x)f_{m,\tau_{k+1}'}^\mathcal{B}(x)|^{1/2}\right\}
			\end{split}
		\end{equation*}
		and $\text{Narrow}_{k+1,m}(\tau_k):=\text{Narrow}_{k,m}(\tau_{k-1})\setminus\text{Broad}_{k+1,m}(\tau_k)$.
	\end{definition}
	
	\begin{lemma}[Decoupling the narrow part ($k\geq 2$)]\label{broaddec2} Fix any $2\leq k\leq N$. Then, for each $\tau_{k-1}$,
		\begin{equation*}
			\int_{\mathrm{Narrow}_{k,m}(\tau_{k-1})}|f_{m,\tau_{k-1}}^\mathcal{B}|^4\leq\left(1+\frac{1}{N-1}\right)^4\sum_{\tau_k\prec\tau_{k-1}}\int_{\mathrm{Narrow}_{k,m}(\tau_{k-1})}|f_{m,\tau_k}^\mathcal{B}|^4.
		\end{equation*}
		
	\end{lemma}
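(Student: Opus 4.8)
The plan is to copy the proof of Lemma \ref{broaddec1} one scale up, with the narrow lemma \ref{narrow} supplying all the substance. First I would unwind the recursive definitions. By construction $\mathrm{Narrow}_{k,m}(\tau_{k-1})=\mathrm{Narrow}_{k-1,m}(\tau_{k-2})\setminus\mathrm{Broad}_{k,m}(\tau_{k-1})$ (with $\mathrm{Narrow}_{1,m}$ taking the role of $\mathrm{Narrow}_{0,m}(\tau_0)$ when $k=2$), and $\mathrm{Broad}_{k,m}(\tau_{k-1})$ is exactly the locus where the broad alternative \ref{broadest} of Lemma \ref{narrow} holds for the base cap $\tau_{k-1}$ together with its children $\tau_k\prec\tau_{k-1}$. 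Hence every $x\in\mathrm{Narrow}_{k,m}(\tau_{k-1})$ fails \ref{broadest}, so applying Lemma \ref{narrow} at level $k$ with base cap $\tau_{k-1}$ forces the narrow alternative
\begin{equation*}
    |f_{m,\tau_{k-1}}^\mathcal{B}(x)|\leq\Big(1+\frac{1}{N-1}\Big)\max_{\tau_k\prec\tau_{k-1}}|f_{m,\tau_k}^\mathcal{B}(x)|,\qquad x\in\mathrm{Narrow}_{k,m}(\tau_{k-1}).
\end{equation*}

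Next I would replace the maximum over children by an $\ell^4$ sum, $\max_{\tau_k\prec\tau_{k-1}}|f_{m,\tau_k}^\mathcal{B}(x)|\leq\big(\sum_{\tau_k\prec\tau_{k-1}}|f_{m,\tau_k}^\mathcal{B}(x)|^4\big)^{1/4}$, raise both sides to the fourth power, and integrate over $\mathrm{Narrow}_{k,m}(\tau_{k-1})$; this yields
\begin{equation*}
    \int_{\mathrm{Narrow}_{k,m}(\tau_{k-1})}|f_{m,\tau_{k-1}}^\mathcal{B}|^4\leq\Big(1+\frac{1}{N-1}\Big)^4\sum_{\tau_k\prec\tau_{k-1}}\int_{\mathrm{Narrow}_{k,m}(\tau_{k-1})}|f_{m,\tau_k}^\mathcal{B}|^4,
\end{equation*}
which is the assertion.

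I do not anticipate a genuine obstacle: the content is entirely in Lemma \ref{narrow}, already proved, and the rest is three elementary lines exactly as in the $k=1$ case. The only point that warrants care is the bookkeeping of the broad/narrow hierarchy — one must check that $\mathrm{Broad}_{k,m}(\tau_{k-1})$ is the piece excised relative to the correct parent cap $\tau_{k-1}$, so that its removal is precisely what disables the broad branch of Lemma \ref{narrow} at level $k$, and that Lemma \ref{narrow} imposes no restriction on $\tau_{k-1}$ beyond being a cap at that scale (which it is). Everything else carries over verbatim from Lemma \ref{broaddec1}.
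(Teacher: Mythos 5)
Your proposal is correct and matches the paper's argument: the paper proves this lemma by noting it is identical to the $k=1$ case (Lemma \ref{broaddec1}), which is precisely the argument you spell out — on the narrow set the broad alternative of Lemma \ref{narrow} is excluded by construction, so the narrow bound holds, and the maximum is dominated by the $\ell^4$ sum before integrating. Your bookkeeping of the recursive broad/narrow hierarchy is also consistent with the paper's definitions.
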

	\begin{proof}
		The argument is identical to \ref{broaddec1}.
	\end{proof}
	
	Combining Lemmas \ref{broaddec1} and \ref{broaddec2}, we conclude
	\begin{equation*}
		\begin{split}
			\int_{U_\alpha^m}|f_m^\mathcal{B}|^4&\leq\left(1+\frac{1}{N-1}\right)^{4N}\sum_{\tau_{N-1}}\int_{\mathrm{Narrow}_{N,m}(\tau_{N-1})}\sum_{\tau_N\prec\tau_{N-1}}|f_{m,\tau_N}^\mathcal{B}|^4\\
			&+\sum_{k=1}^{N}\left(1+\frac{1}{N-1}\right)^{4(k-1)}\sum_{\tau_{k-1}}\int_{\mathrm{Broad}_{k,m}(\tau_{k-1})}|f_{m,\tau_{k-1}}^\mathcal{B}|^4.
		\end{split}
	\end{equation*}
	
	Our next steps are bounding each of the summands in turn.
	
	\begin{lemma}[Narrow bound]\label{narrowbound} We have
		\begin{equation*}
			\sum_{\tau_{N-1}}\int_{B_R}\sum_{\theta\prec\tau_{N-1}}|f_{m,\theta}^\mathcal{B}|^4\leq\sum_\theta\sum_{U\in\mathcal{G}_\theta}\mu(U)\left(\fint_U|f_\theta|^2\right)^2.
		\end{equation*}

	\end{lemma}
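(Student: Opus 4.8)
The plan is to reduce the fourth-power integral to a square-function expression over the wave envelopes $\mathcal{G}_\theta$, exactly the kind of object appearing on the right-hand side of Theorem \ref{squarefunction}. First I would observe that, for fixed $\tau_{N-1}$, the caps $\theta \prec \tau_{N-1}$ have diameter $R^{-1/2}$ and $\tau_{N-1}$ has diameter $R_{N-1}^{-1} = \bf{p}^{-(N-1)} = R^{-1/2}\bf{p}$, so there are at most $\bf{p}$ caps $\theta$ inside each $\tau_{N-1}$ — in fact, since $N = \frac12\lg R$, a $\tau_{N-1}$ is essentially a single $\theta$ up to a bounded factor. The key structural point, however, is simpler: each $f_{m,\theta}^\cal{B}$ is, by the pruning construction, a function which on each $U \in \cal{U}_\theta$ is either supported (if $U \in \cal{G}_\theta$, roughly) or zero, and $|f_{m,\theta}^\cal{B}| \leq |f_\theta|$ pointwise. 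Moreover $|f_\theta|$ is constant on each $U \in \cal{U}_\theta$, so $|f_{m,\theta}^\cal{B}|$ is constant on each such $U$ as well.

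Given this, I would argue cap-by-cap: fix $\theta$ and estimate $\int_{B_R} |f_{m,\theta}^\cal{B}|^4$. Since $f_{m,\theta}^\cal{B}$ vanishes off $\bigcup_{U \in \cal{G}_\theta} U$ (a telescoping consequence of the definitions $f_{k,\theta}^\cal{B} = f_{k,\theta} - f_{k-1,\theta}$ together with the fact that the successive prunings only remove mass on boxes outside the relevant $\cal{G}$), we may write $\int_{B_R} |f_{m,\theta}^\cal{B}|^4 \leq \sum_{U \in \cal{G}_\theta} \int_U |f_\theta|^4$. On each $U \in \cal{G}_\theta$ the function $|f_\theta|$ is constant, equal to $(\fint_U |f_\theta|^2)^{1/2}$, so $\int_U |f_\theta|^4 = \mu(U)(\fint_U |f_\theta|^2)^2$. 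Summing over $U \in \cal{G}_\theta$ and then over $\theta$ gives the claimed bound, after noting that for $k = N-1$ the passage from $\sum_{\theta \prec \tau_{N-1}}|f_{m,\theta}^\cal{B}|^4$ summed over $\tau_{N-1}$ to $\sum_\theta |f_{m,\theta}^\cal{B}|^4$ is just a reindexing of the disjoint partition of the unit ball into $R^{-1/2}$-balls.

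The one point requiring a little care — and the step I expect to be the main obstacle — is justifying the support claim $\operatorname{supp}(f_{m,\theta}^\cal{B}) \subseteq \bigcup_{U \in \cal{G}_\theta} U$, since $f_{m,\theta}^\cal{B}$ is defined through the nested prunings $f_{k,\theta} = f_{k+1,\theta}\sum_{U \in \cal{G}_{\tau_k}}1_U$, and the good-box families $\cal{G}_{\tau_k}$ for $k < N$ live at the coarser scales $\tau_k$, not at $\theta$. The resolution is that $\cal{G}_\theta$ (i.e. the $k=N$ family) is the finest, so $f_{N,\theta}$ is already supported on $\bigcup_{U\in\cal{G}_\theta}1_U$, and each subsequent pruning only multiplies by further indicators, hence cannot enlarge the support; consequently every $f_{k,\theta}$ with $k \leq N$, and in particular every difference $f_{m,\theta}^\cal{B}$, is supported there. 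Once this is in hand the rest is the elementary computation above, and the constant is $1$ because no Cauchy–Schwarz or overlap loss is incurred — the boxes $U \in \cal{G}_\theta$ are disjoint and $|f_\theta|$ is genuinely constant on each.
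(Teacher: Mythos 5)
Your proposal is correct and follows essentially the same route as the paper: the paper's proof is the terse version of your argument, bounding $|f_{m,\theta}^\mathcal{B}|\leq|f_{N,\theta}|=|f_\theta|\sum_{U\in\mathcal{G}_\theta}1_U$ (which packages your support claim together with the pointwise bound $|f_{m,\theta}^\mathcal{B}|\leq|f_\theta|$), then splitting over the disjoint boxes $U\in\mathcal{G}_\theta$ and using that $|f_\theta|$ is constant on each $U$ to get $\int_U|f_\theta|^4=\mu(U)\left(\fint_U|f_\theta|^2\right)^2$. Your careful justification that the later prunings only multiply by further indicators, so the support never grows beyond $\bigcup_{U\in\mathcal{G}_\theta}U$, is exactly the content of the paper's appeal to "the pruning inequalities."
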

	\begin{proof}
        By the pruning inequalities \eqref{ineq:pruning}, for each $\theta$ we may take $|f_{m,\theta}^{\cal{B}}|\leq|f_{N,\theta}|$. By the definition of the pruning, for each $\theta$,
		\begin{equation*}
			\int_{B_R}|f_{N,\theta}|^4=\int_{B_R}\Big|\sum_{U\in\mathcal{G}_\theta}1_Uf_\theta\Big|^4\leq\sum_{U\in\mathcal{G}_\theta}\int_{U}|f_\theta|^4.
		\end{equation*}
        Each $|f_\theta|$ is constant on the blocks $U$, so for each $\theta$ and $U\in\cal{G}_\theta$ we have
        \begin{equation*}
            \int_U|f_\theta|^4=\mu(U)\left(\fint_U|f_\theta|^2\right)^2.
        \end{equation*}
	\end{proof}

	\begin{lemma}[Broad bound]\label{broadbound2} We have, for each $1\leq k\leq N$,
		\begin{equation*}
            \sum_{\tau_{k-1}}\int_{\mathrm{Broad}_{k,m}(\tau_{k-1})}|f_{m,\tau_{k-1}}^\mathcal{B}|^4\leq 18\bf q^{-3}\bf{p}^{10}N^{5}\sum_{m\leq s\leq N}\sum_{\tau_s}\sum_{U\in\mathcal{G}_{\tau_s}}\mu(U)\left(\fint_U\sum_{\theta\prec\tau_s}|f_\theta|^2\right)^2
        \end{equation*}
	\end{lemma}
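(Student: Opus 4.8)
The plan is to trade each broad set $\mathrm{Broad}_{k,m}(\tau_{k-1})$ for a bounded union of the superlevel sets $\mathrm{Br}_\alpha^m(\tau_k,\tau_k')$ appearing in \eqref{broadsetdef}, invoke Proposition \ref{broadc2} on each, collapse the resulting high-frequency integrals by the Fourier orthogonality underlying the High Lemma \ref{highlemma}, and finally run Proposition \ref{lasthighest} followed by Lemma \ref{waveexp} to reach the wave-envelope sum.

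First I would fix $\tau_{k-1}$ and, for each $x\in\mathrm{Broad}_{k,m}(\tau_{k-1})$, choose a pair $\tau_k\neq\tau_k'\prec\tau_{k-1}$ attaining the maximum in the defining inequality of the broad set, so that $|f_{m,\tau_{k-1}}^\cal{B}(x)|\leq\bf{p}N|f_{m,\tau_k}^\cal{B}(x)f_{m,\tau_k'}^\cal{B}(x)|^{1/2}$. This partitions $\mathrm{Broad}_{k,m}(\tau_{k-1})$ into at most $\bf{p}^2$ pieces $E(\tau_k,\tau_k')$, on each of which $|f_{m,\tau_{k-1}}^\cal{B}|^4\leq\bf{p}^4N^4|f_{m,\tau_k}^\cal{B}f_{m,\tau_k'}^\cal{B}|^2$. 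The key point — and the main obstacle — is then to check $E(\tau_k,\tau_k')\subseteq\mathrm{Br}_\alpha^m(\tau_k,\tau_k')$. The upper bound defining $\mathrm{Br}_\alpha^m$ is immediate from the maximizing choice; for the lower bound one uses the nesting $\mathrm{Broad}_{k,m}(\tau_{k-1})\subseteq\mathrm{Narrow}_{k-1,m}(\tau_{k-2})\subseteq\cdots\subseteq\mathrm{Narrow}_{1,m}\subseteq U_\alpha^m$, on which $|f_m^\cal{B}(x)|=|f_{m,\tau_0}^\cal{B}(x)|\geq N^{-1}(1+N^{-5})^{-1}(1-2^{-1/2}e^{-1}N^{-1})\alpha$, and then transports this bound down the ancestor chain $\tau_0\succ\tau_1\succ\cdots\succ\tau_{k-1}$ using the narrow estimate \eqref{narrowest}: each of the at most $N-1$ steps costs a factor $1+\tfrac1{N-1}$, for a cumulative loss of at most $e$, leaving $|f_{m,\tau_{k-1}}^\cal{B}(x)|\geq(1-2^{-1/2}e^{-1}N^{-1})e^{-1}N^{-1}\alpha$. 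This is precisely where the broad/narrow iteration of subsection \ref{bn_sub} must be read carefully, since \eqref{narrowest} only bounds $|f_{m,\tau_{j-1}}^\cal{B}|$ in terms of the largest of the $|f_{m,\tau_j}^\cal{B}|$ over the children of $\tau_{j-1}$, so one either keeps only the maximizing child at each stage or separately discards the portion of the broad set on which $|f_{m,\tau_{k-1}}^\cal{B}|$ is below $c\alpha/N$; the pruning constants in \eqref{pruningchoice} and in the definition of $\cal{G}_\theta$ are calibrated to supply exactly this slack.

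With the containment available, Proposition \ref{broadc2} gives, for each pair, $\int_{E(\tau_k,\tau_k')}|f_{m,\tau_k}^\cal{B}f_{m,\tau_k'}^\cal{B}|^2\leq 4\bf{p}^2\int\big|\sum_{\tau_\ell\prec\tau_{k-1}}|f_{m,\tau_\ell}^\cal{B}|^2*\chi_{>R_\ell/R}^\vee\big|^2$ with $\ell=\max(m-1,k)$. I would sum the at most $\bf{p}^2$ pieces and then sum over $\tau_{k-1}$: because $\ell\geq k-1$, the Fourier transforms $\sum_{\tau_\ell\prec\tau_{k-1}}\widehat{|f_{m,\tau_\ell}^\cal{B}|^2}$, restricted to $\{|\xi|>R_\ell/R\}$, have pairwise disjoint supports as $\tau_{k-1}$ varies — this is the argument for part (a) of the High Lemma \ref{highlemma} — so Plancherel merges the $\tau_{k-1}$-sum into the single integral $\int\big|\sum_{\tau_\ell}|f_{m,\tau_\ell}^\cal{B}|^2*\chi_{>R_\ell/R}^\vee\big|^2$. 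Feeding $f_m^\cal{B}$ (which is Fourier-supported in $\cal{N}_{R^{-1}}(\bb{P}^1)$) into Proposition \ref{lasthighest} bounds this by $3\bf{p}^4N\sum_{s=\ell}^N\sum_{\tau_s}\int\big|\sum_{\theta\prec\tau_s}|f_{m,\theta}^\cal{B}|^2*\chi_{\leq R_s/R}^\vee\big|^2$, and for the relevant range $s\geq m$ Lemma \ref{waveexp}(a) and then (b) turn the $\tau_s$-term into $\sum_{U\in\cal{G}_{\tau_s}}\mu(U)\big(\fint_U\sum_{\theta\prec\tau_s}|f_\theta|^2\big)^2$. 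Multiplying the accumulated constants $\bf{p}^4N^4$, $4\bf{p}^2$, $\bf{p}^2$ and $3\bf{p}^4N$ yields the stated $12\bf{p}^{12}N^5$, completing the estimate. Apart from Step 2, the argument is entirely orthogonality and constant-tracking.
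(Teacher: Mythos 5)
Your proposal follows the paper's own proof of Lemma \ref{broadbound2} essentially verbatim: cover $\mathrm{Broad}_{k,m}(\tau_{k-1})$ by the pairwise sets $\mathrm{Br}_\alpha^m(\tau_k,\tau_k')$ (pointwise cost $\mathbf{p}^4N^4$, at most $\mathbf{p}^2$ pairs per $\tau_{k-1}$), apply Proposition \ref{broadc2} (factor $4\mathbf{p}^2$), then Proposition \ref{lasthighest} together with Lemma \ref{waveexp} (factor $3\mathbf{p}^4N$), giving $12\mathbf{p}^{12}N^5$. Your added care in verifying the lower bound $|f_{m,\tau_{k-1}}^{\mathcal{B}}|\geq(1-2^{-1/2}e^{-1}N^{-1})e^{-1}N^{-1}\alpha$ via the narrow-set chain, and your explicit use of Lemma \ref{waveexp}(a),(b) to pass to the wave-envelope form, simply spell out steps the paper asserts implicitly.
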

	\begin{proof}
		By the definition of the broad set, for each $\tau_{k-1}$ and each $x\in\mathrm{Broad}_{k,m}(\tau_{k-1})$ there is some pair $\tau_k\neq\tau_k'\prec\tau_{k-1}$ such that $|f_{m,\tau_{k-1}}^\mathcal{B}(x)|\leq \bf{p}N|f_{m,\tau_k}^\mathcal{B}(x)f_{m,\tau_k'}^\mathcal{B}(x)|^{1/2}$, as well as 
  $$N^{-1}(1-(2^{1/2}eN)^{-1})(1+(N-1)^{-1})^{-(k-1)}\alpha\leq|f_{m,\tau_{k-1}}^\cal{B}(x)|,$$ i.e.
		\begin{equation*}
			\mathrm{Broad}_{k,m}(\tau_{k-1})\subseteq\bigcup_{\substack{\tau_k,\tau_k'\prec\tau_{k-1}\\\tau_k\neq\tau_k'}}\mathrm{Br}_\alpha^m(\tau_k,\tau_k').
		\end{equation*}
		Thus,
		\begin{equation*}
			\sum_{\tau_{k-1}}\int_{\mathrm{Broad}_{k,m}(\tau_{k-1})}|f_{m,\tau_{k-1}}^\mathcal{B}|^4\leq \bf{p}^4N^{4}\sum_{\tau_{k-1}}\sum_{\substack{\tau_k,\tau_k'\prec\tau_{k-1}\\\tau_k\neq\tau_k'}}\int_{\mathrm{Br}_\alpha^m(\tau_{k},\tau_k')}|f_{m,\tau_{k}}^\mathcal{B}f_{m,\tau_k'}^\cal{B}|^2.
		\end{equation*}
        By Prop. \ref{broadc2}, for each $\tau_{k-1}$ and each distinct $\tau_k,\tau_k'\prec\tau_{k-1}$, then
        \begin{equation*}
            \int_{\mathrm{Br}_\alpha^m(\tau_{k},\tau_k')}|f_{m,\tau_{k}}^\mathcal{B}f_{m,\tau_k'}^\cal{B}|^2\leq 6\bf q^{-2}\bf{p}^2\int_{\Q_p^2}\left|\sum_{\tau_{\ell}\prec\tau_{k-1}}|f_{m,\tau_{\ell}}^{\cal{B}}|^2*\chi_{>R_{\ell}/R}^\vee\right|^2,
        \end{equation*}
        where $\ell=\max(m-1,k)$. By Prop. \ref{lasthighest} and the wave envelope expansion lemma,
        \begin{equation*}
            \int_{\Q_p^2}\left|\sum_{\tau_{\ell}\prec\tau_{k-1}}|f_{m,\tau_{\ell}}^\mathcal{B}|^2*\chi_{> R_{\ell}/R}^\vee\right|^2\leq 3\bf q^{-1}\bf{p}^4N\sum_{s=\ell+1}^N\sum_{\tau_s\prec\tau_{k-1}}\sum_{U\in\mathcal{G}_{\tau_s}}\mu(U)\left(\fint_U\sum_{\theta\prec\tau_s}|f_\theta|^2\right)^2.
        \end{equation*}
        Combining the above, we obtain
        \begin{equation*}
            \sum_{\tau_{k-1}}\int_{\mathrm{Broad}_{k,m}(\tau_{k-1})}|f_{m,\tau_{k-1}}^\mathcal{B}|^4\leq 18\bf q^{-3}\bf{p}^{10}N^{5}\sum_{m\leq s\leq N}\sum_{\tau_s}\sum_{U\in\mathcal{G}_{\tau_s}}\mu(U)\left(\fint_U\sum_{\theta\prec\tau_s}|f_\theta|^2\right)^2.
        \end{equation*}
		
	\end{proof}
	
	\begin{proof}[Proof of Prop. \ref{broadnarrow}]
		
		By \eqref{replacement_containment},
		\begin{equation*}
			\alpha^4\mu(U_\alpha)\leq \alpha^4\mu(V_\alpha).
		\end{equation*}
		Write
		\begin{equation*}
			\alpha^4\mu(V_\alpha)\leq\sum_{m=0}^N\alpha^4\mu(U_\alpha^m)
		\end{equation*}
		where the sets in the right-hand side are as defined in \eqref{pruningchoice}. By Prop. \ref{broadc1},
        \begin{equation*}
            \alpha^4\mu(U_\alpha^0)\leq 20N^{10}\sum_{U\in\mathcal{G}_{\tau_0}}\mu(U)\left(\fint_U\sum_{\theta}|f_\theta|^2\right)^2.
        \end{equation*}
        We now fix some $1\leq m\leq N$. Over $U_\alpha^m$, we have a lower bound on $f_m^\cal{B}$ implying
        \begin{equation*}
            \alpha^4\mu(U_\alpha^m)\leq N^4(1+N^{-4})^{10}(1-(2^{1/2}eN)^{-1})^{-1}\int_{U_\alpha^m}|f_m^\cal{B}|^4.
        \end{equation*}
        By the definition of the broad/narrow sets above, we may bound
        \begin{equation*}
            \begin{split}
            \int_{U_\alpha^m}|f_m^\cal{B}|^4&\leq\left(1+\frac{1}{N}\right)^{4N}\sum_{\tau_{N-1}}\int_{\mathrm{Narrow}_{N,m}(\tau_{N-1})}\sum_{\theta\prec\tau_{N-1}}|f_{\theta,m}^\mathcal{B}|^4\\
            &+\sum_{k=1}^{N}\left(1+\frac{1}{N}\right)^{4(k-1)}\sum_{\tau_{k-1}}\int_{\mathrm{Broad}_{k,m}(\tau_{k-1})}|f_{\tau_{k-1},m}^\mathcal{B}|^4.
            \end{split}
        \end{equation*}
		By the narrow bound \ref{narrowbound},
		\begin{equation*}
			\sum_{\tau_{N-1}}\int_{B_R}\sum_{\theta\prec\tau_{N-1}}|f_{m,\theta}^\mathcal{B}|^4\leq\sum_\theta\sum_{U\in\mathcal{G}_\theta}\mu(U)\left(\fint_U|f_\theta|^2\right)^2.
		\end{equation*}
		By the broad bound \ref{broadbound2},
		\begin{equation*}
			\sum_{\tau_{k-1}}\int_{\mathrm{Broad}_{k,m}(\tau_{k-1})}|f_{m,\tau_{k-1}}^\mathcal{B}|^4\leq 18\bf q^{-3}\bf{p}^{10}N^{5}\sum_{m\leq s\leq N}\sum_{\tau_s}\sum_{U\in\mathcal{G}_\tau}\mu(U)\left(\fint_U\sum_{\theta\prec\tau_s}|f_{\theta}|^2\right)^2.
		\end{equation*}
		
		Thus, for each $1\leq m\leq N$,
		\begin{equation*}
			\alpha^4\mu(U_\alpha^m)\leq 18(1+N^{-4})^{10}(2^{1/2}eN)^{-1})^{-1}\bf{p}^{10}N^{9}\sum_{m\leq s\leq N}\sum_{\tau_s}\sum_{U\in\mathcal{G}_\tau}\mu(U)\left(\fint_U\sum_{\theta\prec\tau_s}|f_{\theta}|^2\right)^2,
		\end{equation*}
		and hence, using the $m=0$ estimate and trivial estimates using $N>6$, we conclude that
		\begin{equation}\label{superlevel_final}
			\alpha^4\mu(U_\alpha)\leq 20\bf q^{-3}\bf{p}^{10}N^{10}\sum_{0\leq s\leq N}\sum_{\tau_s}\sum_{U\in\mathcal{G}_{\tau_s}}\mu(U)\left(\fint_U\sum_{\theta\prec\tau_s}|f_{\theta}|^2\right)^2,
		\end{equation}
		as claimed.

  \end{proof}

	\subsection{Reduction to local estimates}\label{local_sub}
	
	In the above subsections we produced bounds on the measure of the set $U_\alpha=\{x\in B_R:|f(x)|>\alpha\}$. In this subsection we note that, if we can prove Theorem \ref{squarefunction} in the special case that $\{x\in\R^2:|f(x)|>\alpha\}\subseteq Q_R$ for a suitable cube $Q_R$ of radius $R$, then we can conclude that Theorem \ref{squarefunction} is true in the general case.
	
	\begin{proof}[Proof that Prop. \ref{broadnarrow} implies Theorem \ref{squarefunction}]

        Let $f$ be as in the hypothesis of Theorem \ref{squarefunction}. Then, for each metric ball $Q_R\subseteq\Q_p^2$ of radius $R$, the function $1_{Q_R}f$ satisfies the hypothesis of Prop. \ref{broadnarrow}. Thus, we have
        \begin{equation*}
            \mu(\{x\in\Q_p^2:|1_{Q_R}f|>\alpha\})\leq 20\bf q^{-3}\bf{p}^{10}N^{10}\sum_{R^{-1/2}\leq s\leq 1}\sum_{\tau_s}\sum_{U\in\cal{G}_{\tau_s}}\mu(U)\left(\fint_U\sum_{\theta\prec\tau_s}|(1_{Q_R}f)_\theta|^2\right)^2.
        \end{equation*}
        Summing over the $Q_R$, we obtain that
        \begin{equation*}
            \mu(\{x\in\Q_p^2:|f(x)|>\alpha\})\leq 20\bf q^{-3}\bf{p}^{10}N^{10}\sum_{R^{-1/2}\leq s\leq 1}\sum_{\tau_s}\sum_{U\in\cal{G}_{\tau_s}}\mu(U)\sum_{Q_R}\left(\fint_U\sum_{\theta\prec\tau_s}|(1_{Q_R}f)_\theta|^2\right)^2.
        \end{equation*}
        By Minkowski, for each $\tau_s$ and $U\in\cal{G}_{\tau_s}$,
        \begin{equation*}
            \sum_{Q_R}\left(\fint_U\sum_{\theta\prec\tau_s}|(1_{Q_R}f)_\theta|^2\right)^2\leq\left(\fint_U\sum_{\theta\prec\tau_s}\left(\sum_{Q_R}|(1_{Q_R}f)_\theta|^2\right)^2\right)^2.
        \end{equation*}
        By elementary properties of the ultrametric, for each $Q_R$ and each $\theta$ we have
        \begin{equation*}
            (1_{Q_R}f)_\theta=1_{Q_R}f_\theta,
        \end{equation*}
        so that in fact
        \begin{equation*}
            \left(\fint_U\sum_{\theta\prec\tau_s}\left(\sum_{Q_R}|(1_{Q_R}f)_\theta|^2\right)^2\right)^2=\left(\fint_U\sum_{\theta\prec\tau_s}|f_\theta|^2\right)^2.
        \end{equation*}
        The result follows immediately.
	\end{proof}
	
	\section{Proof of Theorem \ref{smallcap}}\label{smallcapproof}

    Theorem \ref{smallcap} will be proved via first establishing the critical case $(r,q)=(2+\frac{2}{\beta},\frac{2+2\beta^{-1}}{2\beta^{-1}-1})$, and then interpolating with the easier endpoints $(\infty,1)$ and $(3,\infty)$, together with H\"older inequalities. We will repeatedly cite the result of Theorem \ref{squarefunction}, and we will abbreviate the constants as
    \begin{equation*}
        \alpha^4|\{x:|f(x)|>\alpha\}|\leq CN^{E_2}\sum_{\substack{k\in\bf{p}^\Z\\R^{-1/2}\leq s\leq 1}}\sum_{\tau_k}\sum_{U\in\mathcal{G}_{\tau_k}}\mu(U)\left(\fint_U\sum_{\theta\prec\tau_k}|f_\theta|^2\right)^2.
    \end{equation*}
    We will in fact make use of the more refined quantities in \eqref{superlevel_final}.
 
	We begin with the partial decoupling statement, using the right-hand side of Theorem \ref{squarefunction}.
	
	\begin{proposition}\label{partial} Suppose $r\geq 4$, $\lambda>0$, and $\mathfrak{C}_r>1$. Let $0\leq k\leq N$ be arbitrary, and fix a canonical scale cap $\tau_k$. Suppose as before that $\Gamma_\beta(R^{-1})$ is a partition of $\mathcal{N}_{R^{-1}}(\mathbb{P}^1)$ into approximate $R^{-\beta}\times R^{-1}$ boxes $\gamma$. Assume $f=\sum_\gamma f_\gamma$ satisfies the following regularity properties.
		\begin{itemize}
			\item[(a)] $\|f_\gamma\|_\infty\leq \lambda$ for each $\gamma$.
			\item[(b)] $\|f_\gamma\|_2^2\leq\lambda^{2-r}\frak{C}_{r}\|f_\gamma\|_r^r$ for each $\gamma$ and each $r\geq 1$.
		\end{itemize}
		Write $\gamma_k$ for boxes of dimensions $\max(R^{-\beta},R_k/R)\times R^{-1}$. Then
		\begin{equation}\label{localenvelope}
			\begin{split}
				\sum_{U\in\mathcal{G}_{\tau_k}}\mu(U)\left(\fint_U\sum_{\theta\prec\tau_k}|f_\theta|^2\right)^2&\leq \frak{C}_{r}
				\big[2^{\frac{1}{2}}eN(\#\tau_k)\alpha^{-1}\big]^{r-4}\\
				&\times\left(\max_{\gamma_k\prec\tau_k}\#(\gamma\prec\gamma_k)\times\#(\gamma\prec\tau_k)\right)^{\frac{r}{2}-1}\sum_{\gamma\prec\tau_k}\|f_\gamma\|_r^r.
			\end{split}
		\end{equation}
		
	\end{proposition}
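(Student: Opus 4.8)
The plan is to bound the left-hand side of \eqref{localenvelope} pointwise on each good envelope $U$ and then sum. Fix $U\in\mathcal{G}_{\tau_k}$. By definition of $\mathcal{G}_{\tau_k}$, on $U$ we have the lower bound $\fint_U\sum_{\theta\prec\tau_k}|f_\theta|^2\geq\alpha^2/(2e^2(\#\tau_k)^2)$, which I will use to convert an $L^2$-type average into an $L^p$-type average at a cost of $[2^{1/2}eN(\#\tau_k)\alpha^{-1}]^{p-4}$ — this is the source of that exponent in the statement. More precisely, write $\Phi_U:=\fint_U\sum_{\theta\prec\tau_k}|f_\theta|^2$; then $\mu(U)\Phi_U^2 = \mu(U)\Phi_U^{p/2}\cdot\Phi_U^{2-p/2}$, and since $p\geq 4$ we have $\Phi_U^{2-p/2}\leq (\alpha^2/(2e^2(\#\tau_k)^2))^{2-p/2}$, which after taking square roots rearranges to the claimed power of $2^{1/2}eN(\#\tau_k)\alpha^{-1}$ (up to the harmless distinction between the two normalizing constants, absorbed into $\mathfrak{C}_p$ or a slightly larger constant). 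This reduces matters to estimating $\sum_{U\in\mathcal{G}_{\tau_k}}\mu(U)\Phi_U^{p/2}$.

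Next I would pass from $\theta$'s to $\gamma$'s. Each $\theta\prec\tau_k$ is a union of $\approx\#(\gamma\prec\theta)$ caps $\gamma$, and, crucially, $|f_\theta|^2$ is constant on $U$ while $f_\theta=\sum_{\gamma\prec\theta}f_\gamma$; since $U$ has dimensions $R\times sR$ with $s=\mathrm{diam}(\tau_k)$ and $\gamma_k$ has width $\max(R^{-\beta},R_k/R)$, the functions $f_\gamma$ entering a single $f_\theta$ on $U$ organize into groups indexed by $\gamma_k$, with $\#(\gamma\prec\gamma_k)$ boxes $\gamma$ inside each $\gamma_k$. Applying Cauchy–Schwarz first within each $\gamma_k$ (losing $\#(\gamma\prec\gamma_k)$) and then over the $\gamma_k\prec\tau_k$, and finally Cauchy–Schwarz over the $\theta\prec\tau_k$ (losing $\#(\gamma\prec\tau_k)$ after summing the geometric count), I get
\begin{equation*}
\Phi_U^{p/2}\leq \left(\max_{\gamma_k\prec\tau_k}\#(\gamma\prec\gamma_k)\cdot\#(\gamma\prec\tau_k)\right)^{p/2-1}\left(\fint_U\sum_{\gamma\prec\tau_k}|f_\gamma|^2\right)^{p/2}.
\end{equation*}
Here I am using that every $|f_\gamma|$ is also constant on $U$ (this follows from the Fourier support of $f_\gamma$ being contained in $\gamma$ and the ultrametric structure of $\K$, exactly as in the paper's repeated constancy-on-$U$ observations).

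Now I would invoke the regularity hypotheses. Since $|f_\gamma|$ is constant on $U$, $\fint_U|f_\gamma|^2 = \mu(U)^{-1}\int_U|f_\gamma|^2\leq\mu(U)^{-1}\|f_\gamma\|_2^2$, and hypothesis (a) gives $|f_\gamma|\leq\lambda$ so $\left(\fint_U\sum_{\gamma\prec\tau_k}|f_\gamma|^2\right)^{p/2}\leq\lambda^{p-4}\left(\fint_U\sum_{\gamma\prec\tau_k}|f_\gamma|^2\right)^{2}$ — wait, that is not quite the route; instead I pass $p/2-1$ powers to the sup and keep one factor: $\left(\sum_{\gamma\prec\tau_k}\fint_U|f_\gamma|^2\right)^{p/2}\leq\left(\#(\gamma\prec\tau_k)\right)^{p/2-1}\sum_{\gamma\prec\tau_k}(\fint_U|f_\gamma|^2)^{p/2}$, then bound $(\fint_U|f_\gamma|^2)^{p/2}\leq\lambda^{p-2}\fint_U|f_\gamma|^2\cdot(\text{adjust})$ using (a), and finally use $\mu(U)\fint_U|f_\gamma|^2\leq\|f_\gamma\|_2^2\leq\lambda^{2-p}\mathfrak{C}_p\|f_\gamma\|_p^p$ from (b). Multiplying through by $\mu(U)$, summing over $U\in\mathcal{G}_{\tau_k}$ (using that the $U$'s tile $B_R$ so each $\int_U$ contributes at most once to $\int_{\K^2}$), and combining with the loss from the first paragraph yields \eqref{localenvelope}.

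The main obstacle I anticipate is bookkeeping the combinatorial factors correctly: one must track three scales ($\gamma$, $\gamma_k$, $\theta$, $\tau_k$) and the two regimes $R^{-\beta}\gtrless R_k/R$ uniformly, making sure the Cauchy–Schwarz losses multiply to exactly $(\max_{\gamma_k}\#(\gamma\prec\gamma_k)\cdot\#(\gamma\prec\tau_k))^{p/2-1}$ and not something larger; and one must be careful that passing from the $L^2$ average over $U$ to the $L^p$ norm of $f_\gamma$ does not double-count across different $U$, which is ensured precisely because distinct $U\in\mathcal{U}_{\tau_k}$ are disjoint. Everything else is the standard constancy-on-envelopes argument already used repeatedly in Section 2.
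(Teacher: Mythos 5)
Your first step (using the defining lower bound for $U\in\mathcal{G}_{\tau_k}$ to raise the power from $2$ to $p/2$ at cost $[2^{1/2}eN(\#\tau_k)\alpha^{-1}]^{p-4}$) matches the paper, but the middle of your argument has a genuine gap: you spend the combinatorial budget twice. In your second paragraph you pass from $\sum_{\theta\prec\tau_k}|f_\theta|^2$ to $\sum_{\gamma\prec\tau_k}|f_\gamma|^2$ by pointwise Cauchy--Schwarz, paying $\left(\max_{\gamma_k\prec\tau_k}\#(\gamma\prec\gamma_k)\cdot\#(\gamma\prec\tau_k)\right)^{p/2-1}$ (indeed any pointwise route costs at least $(\#(\gamma\prec\theta))^{p/2}$, which is why your display needs $p\ge 4$ to be true at all). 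Then in your third paragraph you necessarily pay \emph{again}: the power-mean step $\big(\sum_{\gamma\prec\tau_k}\fint_U|f_\gamma|^2\big)^{p/2}\leq(\#(\gamma\prec\tau_k))^{p/2-1}\sum_{\gamma\prec\tau_k}\big(\fint_U|f_\gamma|^2\big)^{p/2}$ costs an extra $(\#(\gamma\prec\tau_k))^{p/2-1}$ before hypotheses (a),(b) can be used. Multiplying the two losses gives $\left(\max_{\gamma_k}\#(\gamma\prec\gamma_k)\cdot\#(\gamma\prec\tau_k)\right)^{p/2-1}(\#(\gamma\prec\tau_k))^{p/2-1}$, which exceeds the constant in \eqref{localenvelope} by the factor $(\#(\gamma\prec\tau_k))^{p/2-1}=(R^{\beta}/R_k)^{p/2-1}$ --- a power of $R$, not absorbable into $\mathfrak{C}_p$ or the $\alpha$-factor. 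There is no way to rebalance this within your scheme: the second loss is unavoidable once you are at scale $\gamma$, and the first cannot be reduced below $(\#(\gamma\prec\theta))^{p/2}$ by pointwise inequalities.

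The missing idea is the \emph{local orthogonality at the intermediate scale $\gamma_k$}, which is the actual content of the proposition and the reason $\gamma_k$ appears in the statement. Within each $\theta\prec\tau_k$ the pieces $f_{\gamma_k}$, $\gamma_k\prec\theta$, have frequency supports separated by $\geq R_k/R$ in the tangent direction, while $U$ has tangent side length $R/R_k$; hence the cross terms integrate to zero over $U$ and one has the lossless identity $\int_U\sum_{\theta\prec\tau_k}|f_\theta|^2=\int_U\sum_{\gamma_k\prec\tau_k}|f_{\gamma_k}|^2$. The paper then spends the combinatorial budget exactly once, via the $L^\infty$ bound at scale $\gamma_k$: $\big\|\sum_{\gamma_k\prec\tau_k}|f_{\gamma_k}|^2\big\|_\infty\leq\lambda^2\max_{\gamma_k}\#(\gamma\prec\gamma_k)\cdot\#(\gamma\prec\tau_k)$, extracting $p/2-1$ powers of this and keeping one copy of $\int_U\sum_{\gamma_k}|f_{\gamma_k}|^2$, which after summing over $U$, using global orthogonality $\int\sum_{\gamma_k}|f_{\gamma_k}|^2=\int\sum_\gamma|f_\gamma|^2$, and applying (b), gives \eqref{localenvelope} with the $\lambda$-powers cancelling. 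As a minor additional point, your claim that $|f_\gamma|$ is constant on $U$ fails when $R_k<R^{1-\beta}$ (the constancy boxes of $f_\gamma$ have tangent length $R^{\beta}<R/R_k$), though that claim is not load-bearing in your write-up.
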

	\begin{proof}
		
		For each $\theta\prec\tau_k$, the small caps $\gamma_k\prec\theta$ are $\max(R^{-\beta},R_k/R)\geq R_k/R$-separated. Fix any $U\in\mathcal{G}_{\tau_k}$.  Since $U\|U_{\tau_k,R}$ has dimensions $R/R_k\times R$, we conclude that the $f_{\gamma_k}$ are locally orthogonal on $U$. Thus
		\begin{equation*}
			\int_U\sum_{\theta\prec\tau_k}|f_{\theta}|^2=\int_U\sum_{\gamma_k\prec\tau_k}|f_{\gamma_k}|^2,
		\end{equation*}
		and so, appealing to the definition of $\mathcal{G}_{\tau_k}$,
		\begin{equation*}
			\frac{\alpha^2}{2e^2N^2(\#\tau_k)^2}\leq\fint_U\sum_{\gamma_k\prec\tau_k}|f_{\gamma_k}|^2.
		\end{equation*}
		Multiplying the left-hand side of \eqref{localenvelope} by the $(\frac{r}{2}-2)$-power of the latter display, we obtain the estimate
		\begin{equation}\label{envest}
			\sum_{U\in\mathcal{G}_{\tau_k}}\mu(U)\left(\fint_U\sum_{\theta\prec\tau_k}|f_\theta|^2\right)^2\leq \big[2^{1/2}eN(\#\tau_k)\alpha^{-1}\big]^{r-4}\sum_{U\in\mathcal{G}_{\tau_k}}\mu(U)\left(\fint_U\sum_{\gamma_k\prec\tau_k}|f_{\gamma_k}|^2\right)^{\frac{r}{2}}.
		\end{equation}
		Uniformity assumption (a) implies
		\begin{equation*}
			\Big\|\sum_{\gamma_k\prec\tau_k}|f_{\gamma_k}|^2\Big\|_\infty\leq\lambda^2\big[\max_{\gamma_k\prec\tau_k}\#(\gamma\prec\gamma_k)\big]\times\#(\gamma\prec\tau_k).
		\end{equation*}
		By removing factors of $\big\|\sum_{\gamma_k\prec\tau_k}|f_{\gamma_k}|^2\big\|_\infty$ from \eqref{envest}, we obtain
		\begin{equation*}
			\begin{split}
				\sum_{U\in\mathcal{G}_{\tau_k}}\mu(U)\left(\fint_U\sum_{\theta\prec\tau_k}|f_\theta|^2\right)^2&\leq
                2^{\frac{r}{2}-2}e^{r-4}N^{r-4}(\#\tau_k)^{r-4}\lambda^{r-2}\alpha^{4-r}\\
                &\times\left(\max_{\gamma_k\prec\tau_k}\#(\gamma\prec\gamma_k)\times\#(\gamma\prec\tau_k)\right)^{\frac{r}{2}-1}\sum_{U\in\mathcal{G}_{\tau_k}}\int_U\sum_{\gamma_k\prec\tau_k}|f_{\gamma_k}|^2,
			\end{split}
		\end{equation*}
		and by local orthogonality and uniformity assumption (b)
		\begin{equation*}
			\sum_{U\in\mathcal{G}_{\tau_k}}\int_U\sum_{\gamma_k\prec\tau_k}|f_{\gamma_k}|^2=\int\sum_{\gamma\prec\tau_k}|f_\gamma|^2\leq\lambda^{2-r}\frak{C}_{r}\sum_{\gamma\prec\tau_k}\|f_\gamma\|_r^r.
		\end{equation*}
		Together we get the estimate
		\begin{equation*}
			\begin{split}
				\sum_{U\in\mathcal{G}_{\tau_k}}\mu(U)\left(\fint_U\sum_{\theta\prec\tau_k}|f_\theta|^2\right)^2&\leq \frak{C}_{r}\big[2^{1/2}eN(\#\tau_k)\alpha^{-1}\big]^{r-4}\\
				&\times\left(\max_{\gamma_k\prec\tau_k}\#(\gamma\prec\gamma_k)\times\#(\gamma\prec\tau_k)\right)^{\frac{r}{2}-1}\sum_{\gamma\prec\tau_k}\|f_\gamma\|_r^r,
			\end{split}
		\end{equation*}
		as claimed.
	\end{proof}

    The above amounts to a proof of Theorem \ref{smallcap}, in the special case that $f$ satisfies the assumed regularity properties. It remains to remove those assumptions, which we do now.

    We find it convenient to separate out a proof of a superlevel set estimate for small cap decoupling at the critical exponent pair. The full result, Theorem \ref{smallcap}, will follow by an elementary argument and interpolation.
    \begin{theorem}[Main estimate for critical exponents]\label{mainest_crit}
        Suppose $R\geq 16$ and $N\geq 10$. Let $f$ be Schwartz--Bruhat supported in $B_R$ with Fourier support in $\cal{N}_{R^{-1}}(\bb{P}^1)$, such that $\max_\theta\|f_\theta\|_\infty=1$. Let $(r,q)=(2+\frac{2}{\beta},\frac{2+2\beta^{-1}}{2\beta^{-1}-1})$. Then, for each $R^{-1/2}\leq\alpha\leq R^{1/2}$, we have the estimate
        \begin{equation}\label{critexponent}
            \begin{split}
			 \alpha^r\mu\big(\{x:|f(x)|>\alpha\}\big)|&\leq\frac{e^{4r-3}}{50\cdot 2^{3r/2}}\frac{\bf q^{-3}\bf{p}^{10}}{(\log\bf p)^{r+7}}(\log R)^{3r+10}\\
             &\times\max\big(R^{\beta(r-\frac{r}{q}-1)-1},R^{\beta(\frac{r}{2}-\frac{r}{q})}\big)\Big(\sum_\gamma\|f_\gamma\|_r^q\Big)^{\frac{r}{q}}.
            \end{split}
        \end{equation}
    \end{theorem}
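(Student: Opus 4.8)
The plan is to feed the refined superlevel estimate \eqref{superlevel_final} into the local decoupling Proposition \ref{partial}, after first reducing $f$ to a function meeting the regularity hypotheses (a)--(b) of that proposition. Since $\widehat{f_\gamma}=\widehat f\,1_\gamma$ and, in the coordinates straightening $\gamma$, the operator $\mathcal P_\gamma$ is convolution against a function of unit $L^1$ mass, we have $\|f_\gamma\|_\infty\le\max_\theta\|f_\theta\|_\infty=1$ for each $\gamma$, and $|f_\gamma|$ is locally constant at the dual scale $R^\beta\times R$. Thus each $f_\gamma$ decomposes as $\sum_l f_\gamma^{(l)}$ along the dyadic level sets $\{|f_\gamma|\sim 2^l\}$, and since these are unions of dual boxes, each $f_\gamma^{(l)}$ is still Fourier supported in $\gamma$. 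Discarding the pieces whose height is below a negligible threshold (together they perturb $|f|$ by at most $\tfrac12\alpha$, using the crude count $\#\gamma\le R^\beta$), grouping the remaining pieces by $l$, and pigeonholing produces a single height $\lambda\sim 2^{l_0}$ and a restriction $g=\sum_\gamma g_\gamma$ of $f$ (so $\|g_\gamma\|_p\le\|f_\gamma\|_p$) with $\mu(\{|f|>\alpha\})\lesssim(\log R)^{O(1)}\mu(\{|g|>\alpha'\})$ for some $\alpha'\sim\alpha(\log R)^{-O(1)}$, such that $g$ obeys (a) with this $\lambda$ and (b) with $\mathfrak C_p=2^{O(p)}$. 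Two further dyadic pigeonholings, one in $\|g_\gamma\|_p$ and one (scale by scale) in the number of active $\gamma$ contained in a given $\tau_k$, arrange that all surviving $\gamma$ have comparable $L^p$ norm and that each active $\tau_k$ at scale $k$ contains the same number $m_k$ of them; each such step costs only a power of $\log R$.

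Next I would apply \eqref{superlevel_final} to $g$ at height $\alpha'$, multiply through by $(\alpha')^{p-4}$, and then apply Proposition \ref{partial} to each inner sum $\sum_{U\in\mathcal G_{\tau_k}}\mu(U)\big(\fint_U\sum_{\theta\prec\tau_k}|g_\theta|^2\big)^2$ with the above $\lambda,\mathfrak C_p$. The factor $(\alpha')^{p-4}$ exactly cancels the $[(\alpha')^{-1}]^{p-4}$ appearing in Proposition \ref{partial}, leaving, up to a constant of the form $\mathbf p^{12}N^{O(1)}(\log R)^{O(1)}$,
\begin{equation*}
\begin{split}
(\alpha')^p\mu(\{|g|>\alpha'\})&\lesssim\mathbf p^{12}N^{O(1)}(\log R)^{O(1)}\\
&\quad\times\sum_{k=0}^N\sum_{\tau_k}(\#\tau_k)^{p-4}\Big(\max_{\gamma_k\prec\tau_k}\#(\gamma\prec\gamma_k)\cdot\#(\gamma\prec\tau_k)\Big)^{\frac p2-1}\sum_{\gamma\prec\tau_k}\|g_\gamma\|_p^p.
\end{split}
\end{equation*}
For a fixed $k$, the uniformity produced above lets me run Hölder in the favourable direction: $\sum_{\gamma\prec\tau_k}\|g_\gamma\|_p^p\sim m_k^{-(\frac pq-1)}\big(\sum_{\gamma\prec\tau_k}\|g_\gamma\|_p^q\big)^{p/q}$, and then $\sum_{\tau_k}(\,\cdot\,)^{p/q}\sim(\#\tau_k)^{-(\frac pq-1)}\big(\sum_\gamma\|g_\gamma\|_p^q\big)^{p/q}$; since $m_k\cdot\#\tau_k=M$, the total number of active $\gamma$, the $k$-th term becomes a bounded multiple of
\begin{equation*}
(\#\tau_k)^{p-4}\Big(\max_{\gamma_k\prec\tau_k}\#(\gamma\prec\gamma_k)\Big)^{\frac p2-1}m_k^{\frac p2-1}\,M^{-(\frac pq-1)}\Big(\sum_\gamma\|g_\gamma\|_p^q\Big)^{p/q}.
\end{equation*}

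Writing $\#\tau_k=M/m_k$ and using the critical-exponent identity $p-3-\tfrac pq=0$ (which is exactly $\tfrac3p+\tfrac1q=1$) together with $\tfrac p2-1=\tfrac1\beta$, the power of $M$ cancels and the coefficient collapses to $m_k^{\,2-1/\beta}\big(\max_{\gamma_k\prec\tau_k}\#(\gamma\prec\gamma_k)\big)^{1/\beta}$. With the elementary cap counts $m_k\le\mathbf p^{\,2N\beta-k}$ and $\#(\gamma\prec\gamma_k)\le\mathbf p^{\max(0,\,k-2N(1-\beta))}$, a one-variable computation shows the exponent of $\mathbf p$ is at most $2N(2\beta-1)$ for every $0\le k\le N$ (with equality only at $k=0$ and $k=N$); that is, each $k$-th coefficient is $\le R^{2\beta-1}$, which at the critical pair $(p,q)$ equals the maximum appearing in \eqref{critexponent}. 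Summing the $\le N+1$ scales and accounting the accumulated $\log R$ and $N$ losses against the $N^{10}$ of \eqref{superlevel_final} and the $N^{p-4}$ of Proposition \ref{partial} gives \eqref{critexponent} for $g$; the transfer $\alpha^p\mu(\{|f|>\alpha\})\lesssim(\log R)^{O(1)}(\alpha')^p\mu(\{|g|>\alpha'\})$ returns it to $f$. I expect the combinatorial optimization to be the main obstacle: one must verify that the scale-dependent coefficient never overshoots $R^{2\beta-1}$, which hinges on the exact arithmetic of $(p,q)$, and one must carry out the pigeonholing reductions economically enough that the polylogarithmic losses do not exceed those claimed. The hypothesis $R^{-1/2}\le\alpha\le R^{1/2}$ is used to keep $\alpha'$ in a usable range and to bound the number of relevant level sets by $O(\log R)$.
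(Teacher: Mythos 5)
Your plan is correct and follows essentially the same route as the paper: pigeonhole $f$ into wave-envelope pieces of uniform height and uniform cap counts (the paper's $\lambda g^{(\lambda,\mathfrak{r})}$ and the classes $\mathcal{T}_k(\mathfrak{s}^k)$), apply the refined superlevel estimate \eqref{superlevel_final} together with Proposition \ref{partial} so that the $\alpha^{p-4}$ factors cancel, and then optimize the resulting combinatorial coefficient over scales before undoing the pigeonholing. The only real difference is in the endgame: where the paper's Lemma \ref{param_dec_lem} runs casework valid for all $p\geq 4$ (keeping both terms of the max), you specialize to the critical pair and check directly that the scale-$k$ coefficient $m_k^{\,2-1/\beta}\big(\#(\gamma\prec\gamma_k)\big)^{1/\beta}$ never exceeds $R^{2\beta-1}$, which is equivalent here since both exponents in the max coincide at $(p_\beta,q_\beta)$.
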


    \begin{proof}[Proof of Theorem \ref{mainest_crit}]

		We may write
		\begin{equation*}
			f=\sum_{N^{-2}R^{-1/2}<\lambda\leq 1}\sum_{\substack{\gamma\in\Gamma_\beta(R^{-1})\\\|f_\gamma\|_\infty\in(e^{-1}\lambda,\lambda]}}f_\gamma+N^{-2}R^{-1/2}\eta,
		\end{equation*}
		where the $\lambda$ range over values of the form $\lambda_{k+1}=\lfloor e^{-1}\lambda_k\rfloor$, $\lambda_0=1$, and $\eta$ is Schwartz--Bruhat, supported on $B_R$, and uniformly bounded by $1$. We abbreviate
		\begin{equation*}
			\Gamma_\beta^\lambda(R^{-1})=\big\{\gamma\in\Gamma_\beta(R^{-1}):\|f_\gamma\|_\infty\in(e^{-1}\lambda,\lambda]\big\}.
		\end{equation*}
		
		Then, for each $\lambda$, consider the wave envelope expansion
		\begin{equation*}
			\sum_{\gamma\in\Gamma_\beta^\lambda(R^{-1})}f_\gamma=\sum_{\gamma\in\Gamma_\beta^\lambda(R^{-1})}\sum_U 1_Uf_\gamma,
		\end{equation*}
		where each $U$ has dimensions $\sim R^\beta\times R$ and has long edge parallel to $\mathbf{n}_{c_\gamma}$. Since $\gamma\in\Gamma_\beta^\lambda(R^{-1})$, there is some $U$ such that $\|1_Uf\|_\infty\in(e^{-1}\lambda,\lambda]$. If we write $\mathcal{U}_\lambda=\mathcal{U}_\lambda^\gamma$ for the set of $U$ for which $\|1_Uf_\gamma\|_\infty\in(e^{-1}\lambda,\lambda]$, then for all $\gamma\in\Gamma_\beta^{\lambda}(R^{-1})$
		\begin{equation*}
			e^{-r}(\#\mathcal{U}_\lambda)R^{1+\beta}\lambda^r\leq\Big\|\sum_{U\in\mathcal{U}_\lambda}1_Uf_\gamma\Big\|_r^r\leq(\#\mathcal{U}_\lambda)R^{1+\beta}\lambda^r,
		\end{equation*}
		and so
		\begin{equation}\label{pruncomp}
			\Big\|\frac{1}{\lambda}\sum_{U\in\mathcal{U}_\lambda}1_Uf_\gamma\Big\|_2^2\leq(\#\cal{U}_\lambda)R^{1+\beta}\leq e^r\Big\|\frac{1}{\lambda}\sum_{U\in\mathcal{U}_\lambda}1_Uf_\gamma\Big\|_r^r
		\end{equation}
        and
        \begin{equation}\label{pruncomp2}
            \Big\|\frac{1}{\lambda}\sum_{U\in\mathcal{U}_\lambda}1_Uf_\gamma\Big\|_r^r\leq(\#\cal{U}_\lambda^\gamma)R^{1+\beta}\leq e^2\Big\|\frac{1}{\lambda}\sum_{U\in\mathcal{U}_\lambda}1_Uf_\gamma\Big\|_2^2.
        \end{equation}
		For each $1\leq\mathfrak{t}\leq R$ and each $\lambda$, write $\Gamma_\beta^{\lambda;\mathfrak{t}}(R^{-1})$ to be the collection of $\gamma\in\Gamma_\beta^{\lambda}(R^{-1})$ such that $\#\mathcal{U}_\lambda^\gamma\in(e^{-1}\mathfrak{t},\frak{t}]$. Define for $\gamma\in\Gamma_\beta^\lambda(R^{-1})$
		\begin{equation*}
			g_\gamma^{(\lambda)}=\frac{1}{\lambda}\sum_{U\in\mathcal{U}_\lambda}1_Uf_\gamma
		\end{equation*}
		and
		\begin{equation*}
			g^{(\lambda,\mathfrak{t})}=\sum_{\gamma\in\Gamma_\beta^{\lambda;\mathfrak{t}}(R^{-1})} g_\gamma^{(\lambda)}.
		\end{equation*}
		We may observe that $g^{(\lambda,\frak{t})}$ is a Schwartz--Bruhat function with Fourier support in the $R^{-1}$-neighborhood of the truncated parabola. Thus, for each $\lambda,\mathfrak{t}$, and $\mathfrak{a}>0$ we have
		\begin{equation*}
			\mathfrak{a}^4\mu\big(\{x:|\lambda g^{(\lambda,\mathfrak{t})}(x)|>\mathfrak{a}\}\big)\leq CN^{E_2}\sum_{0\leq k\leq N}\sum_{\tau_k}\sum_{U\in\mathcal{G}_{\tau_k}[\lambda g^{(\lambda,\frak{t})};\mathfrak{a}]}\mu(U)\left(\fint_U\sum_{\theta\prec\tau_k}|\lambda g_\theta^{(\lambda,\mathfrak{t})}|^2\right)^2,
		\end{equation*}
		where we have written $\mathcal{G}_{\tau_k}[\lambda g^{(\lambda,\frak{t})};\mathfrak{a}]$ to record that the pruning is with respect to the function $\lambda g^{(\lambda,\frak{t})}$ with the amplitude parameter $\mathfrak{a}$. For each $0\leq k\leq N$ and each $1\leq\mathfrak{s}\leq R^{1/2}$, let $\mathcal{T}_k(\mathfrak{s})$ denote the collection of $\tau_k$ such that $\#\{\gamma\prec\tau_k:g_\gamma^{(\lambda,\mathfrak{t})}\neq 0\}\in(e^{-1}\mathfrak{s},\frak{s}]$. By pigeonholing, for each $k$ we may find $\mathfrak{s}^k$ such that
		\begin{equation*}
			\mathfrak{a}^4\mu\big(\{x:|\lambda g^{(\lambda,\mathfrak{t})}(x)|>\mathfrak{a}\}\big)\leq 2^{-1}CN^{E_2}(\log R)\sum_{0\leq k\leq N}\sum_{\tau_k\in\mathcal{T}_k(\mathfrak{s}^k)}\sum_{U\in\mathcal{G}_{\tau_k}[\lambda g^{(\lambda,\frak{t})};\mathfrak{a}]}\mu(U)\left(\fint_U\sum_{\theta\prec\tau_k}|\lambda g_\theta^{(\lambda,\mathfrak{t})}|^2\right)^2.
		\end{equation*}
		By Prop. \ref{partial} and \eqref{pruncomp}, we have
		\begin{equation*}
            \begin{split}
			\mathfrak{a}^r\mu\big(\{x:|\lambda g^{(\lambda,\mathfrak{t})}(x)|>\mathfrak{a}\}\big)&\leq C2^{\frac{r}{2}-3}e^{2r-4}N^{r+E_2-4}(\log R)\sum_{0\leq k\leq N}(\#\mathcal{T}_k(\mathfrak{s}^k))^{r-4}\\
            &\times\sum_{\tau_k\in\mathcal{T}_k(\mathfrak{s}^k)}\left(\mathfrak{s}^k\max_{\gamma_k\prec\tau_k}\#(\gamma\prec\gamma_k)\right)^{\frac{r}{2}-1}\sum_{\gamma\prec\tau_k}\|\lambda g_\gamma^{(\lambda,\mathfrak{t})}\|_r^r,
            \end{split}
		\end{equation*}
		and by pigeonholing to a single $0\leq k_*\leq N$ we have
		\begin{equation*}\label{refdprecases}
			\begin{split}
				\mathfrak{a}^p&\mu\big(\{x:|\lambda g^{(\lambda,\mathfrak{t})}(x)|>\mathfrak{a}\}\big)\\
				&\leq C2^{\frac{r}{2}-3}e^{2r-4}(N+1)N^{r+E_2-4}(\log R)(\#\mathcal{T}_{k_*}(\mathfrak{s}^{k_*}))^{r-4}\sum_{\tau_{k_*}\in\mathcal{T}_{k_*}(\mathfrak{s}^{k_*})}\left(\mathfrak{s}^{k_*}\max_{\gamma_{k_*}\prec\tau_{k_*}}\#(\gamma\prec\gamma_{k_*})\right)^{\frac{r}{2}-1}\sum_{\gamma\prec\tau_{k_*}}\|\lambda g_\gamma^{(\lambda,\mathfrak{t})}\|_r^r\\
				&\leq C2^{\frac{r}{2}-3}e^{2r-4}(N+1)N^{r+E_2-4}(\log R)(\#\mathcal{T}_{k_*}(\mathfrak{s}^{k_*}))^{r-3}\left(\mathfrak{s}^{k_*}\max_{\gamma_{k_*}\prec\tau_{k_*}}\#(\gamma\prec\gamma_{k_*})\right)^{\frac{r}{2}-1}\mathfrak{s}^{k_*}\lambda^r\mathfrak{t}\cdot R^{1+\beta}.
			\end{split}
		\end{equation*}
        We claim that this amounts to a decoupling inequality for the $\lambda g^{(\lambda,\frak{t})}$. That is,
        \begin{lemma}\label{param_dec_lem} For each $\lambda,\frak{t}$, and $\frak{a}$, and each $r\geq 4$, we have the estimate
            \begin{equation}\label{pigeonholed}
                \mathfrak{a}^r\mu\big(\{x:|\lambda g^{(\lambda,\mathfrak{t})}(x)|>\mathfrak{a}\}\big)\leq C2^{\frac{r}{2}-3}e^{4r-3}(N+1)N^{r+E_2-4}(\log R)\max\big(R^{\beta(r-\frac{r}{q}-1)-1},R^{\beta(\frac{r}{2}-\frac{r}{q})}\big)\Big(\sum_\gamma\|\lambda g_\gamma^{(\lambda,\mathfrak{t})}\|_r^q\Big)^{\frac{r}{q}}.
            \end{equation}
        \end{lemma}
        We postpone the proof until the end of the argument, to preserve the flow of our calculation. Recalling the identity
		\begin{equation*}
			f=\sum_{N^{-2}R^{-1/2}<\lambda\leq 1}\lambda g^{(\lambda)}+N^{-2}R^{-1/2}\eta,
		\end{equation*}
		and consequently, for a suitable $\lambda_*$,
		\begin{equation*}
			\begin{split}
				\alpha^r\mu\big(\{x:|f(x)|>\alpha\}\big)&\leq\alpha^r\sum_{N^{-2}R^{-1/2}<\lambda\leq 1}\mu\Big(\Big\{x:|\lambda g^{(\lambda)}(x)|\geq\frac{\alpha}{\cal{Z}}\Big\}\Big)\\
				&\leq\cal{Z}^{r+1}\Big(\frac{\alpha}{\cal{Z}}\Big)^r\mu\Big(\Big\{x:|\lambda_*g^{(\lambda_*)}(x)|\geq\frac{\alpha}{\cal{Z}}\Big\}\Big),
			\end{split}
		\end{equation*}
		(where we have abbreviated $\cal{Z}=2\log N+\frac{1}{2}\log R+1$) and hence, for a suitable $\mathfrak{t}$,
		\begin{equation*}
			\alpha^r\mu\big(\{x:|f(x)|>\alpha\}\big)\leq\left[\cal{Z}\log R\right]^{r+1}\Big(\frac{\alpha}{\cal{Z}\log R}\Big)^r\mu\Big(\Big\{x:|\lambda_*g^{(\lambda_*,\mathfrak{t})}(x)|\geq\frac{\alpha}{\cal{Z}\log R}\Big\}\Big),
		\end{equation*}
		which by \eqref{pigeonholed}, applied to $\mathfrak{a}=\frac{\alpha}{\cal{Z}\log R}$, implies
		\begin{equation*}
            \begin{split}
			 \alpha^r\mu\big(\{x:|f(x)|>\alpha\}\big)|&\leq C2^{\frac{r}{2}-3}e^{4r-3}\cal{Z}^{r+1}(N+1)N^{r+E_2-4}(\log R)^{r+2}\\
    &\times\max\big(R^{\beta(r-\frac{r}{q}-1)-1},R^{\beta(\frac{r}{2}-\frac{r}{q})}\big)\Big(\sum_\gamma\|\lambda_* g_\gamma^{(\lambda_*,\mathfrak{t})}\|_r^q\Big)^{\frac{r}{q}}.
            \end{split}
		\end{equation*}
		Finally, we note that each $\lambda_*g_\gamma^{(\lambda_*,\mathfrak{t})}$ is obtained by taking a subsum of a partition of unity applied to $f_\gamma$, so we conclude that
		\begin{equation}\label{superlevel_last}
            \begin{split}
			\alpha^r\mu\big(\{x:|f(x)|>\alpha\}\big)|&\leq C2^{\frac{r}{2}-3}e^{4r-3}\cal{Z}^{r+1}(N+1)N^{r+E_2-4}(\log R)^{r+2}\\
            &\times\max\big(R^{\beta(r-\frac{r}{q}-1)-1},R^{\beta(\frac{r}{2}-\frac{r}{q})}\big)\Big(\sum_\gamma\|f_\gamma\|_r^q\Big)^{\frac{r}{q}},
            \end{split}
		\end{equation}
        as claimed.
        
    \end{proof}
	
	\begin{proof}[Proof of Theorem \ref{smallcap}]

        By scaling we may assume that $\max_\theta\|f_\theta\|_\infty=1$. By \cite{johnsrude2024restricted}, Lemma 5.4, we may assume that $f$ is supported in $B_R$. By the layer-cake integral
        \begin{equation*}
            \int|f|^r=r\int_0^{R^{1/2}}\alpha^{r-1}\mu(U_\alpha)d\alpha,
        \end{equation*}
        and the inequalities
        \begin{equation}\label{lowamptrivial}
            R^{-\frac{r}{2}+2}\leq R^{\frac{r}{2}+1-\beta(r-1)}\leq\max_\gamma\|f_\gamma\|_r^r,
        \end{equation}
        \begin{equation}\label{scalechoice}
            \int_{R^{-1/2}}^{R^{1/2}}r\alpha^{r-1}\mu(U_\alpha)d\alpha\leq r(\log R)\sup_{\alpha\in [R^{-1/2},R^{1/2}]}\alpha^r\mu(U_\alpha),
        \end{equation}
        we transform \eqref{superlevel_last} into the decoupling estimate
        \begin{equation*}
            D_{r_\beta,q_\beta}^\K(R;\beta)\leq \frac{e^{5+2\beta^{-1}}}{90\cdot 2^{3\beta^{-1}}}\frac{\bf q^{-3}\bf{p}^{10}}{(\log\bf p)^{9+2\beta^{-1}}}(\log R)^{17+6\beta^{-1}}R^{\beta(\frac{r_\beta}{2}-\frac{r_\beta}{q_\beta})}.
        \end{equation*}

        It remains to use interpolation to cover the remaining exponents. The two quantities we will compare against are
        \begin{equation*}
            D_{\infty,1}^\K(R;\beta)\leq 1
        \end{equation*}
        and
        \begin{equation*}
            D_{4,4}^\K(R;\beta)\leq 2R^\beta.
        \end{equation*}
        The latter estimate follows trivially from C\'ordoba--Fefferman and flat decoupling.

        For each $3\leq r\leq\infty$, write $\frac{1}{q_r}=1-\frac{3}{r}$. Write $r_\beta=2+\frac{2}{\beta}$ and $q_\beta=q_{r_\beta}$. Let $\frac{3}{r}+\frac{1}{q}\leq 1$. Suppose that $2+\frac{2}{\beta}\leq r$ and $\beta<1$. Then, from the two inequalities
        \begin{equation*}
            D_{p_\beta,q_\beta}^\K(R;\beta)\leq \frac{e^{5+2\beta^{-1}}}{90\cdot 2^{3\beta^{-1}}}\frac{\bf q^{-3}\bf{p}^{10}}{(\log\bf p)^{9+2\beta^{-1}}}(\log R)^{17+6\beta^{-1}}R^{\beta(\frac{r_\beta}{2}-\frac{r_\beta}{q_\beta})},
        \end{equation*}
        \begin{equation*}
            D_{\infty,1}^\K(R;\beta)\leq 1,
        \end{equation*}
        we interpolate to obtain
        \begin{equation*}
            D_{r,q_r}^\K(R;\beta)\leq \frac{e^{5+2\beta^{-1}}}{90\cdot 2^{3\beta^{-1}}}\frac{\bf q^{-3}\bf{p}^{10}}{(\log\bf p)^{9+2\beta^{-1}}}(\log R)^{17+6\beta^{-1}}R^{\beta(r-\frac{r}{q_r}-1)-1}.
        \end{equation*}
        By H\"older, we conclude the desired
        \begin{equation*}
            D_{r,q}^\K(R;\beta)\leq \frac{e^{5+2\beta^{-1}}}{90\cdot 2^{3\beta^{-1}}}\frac{\bf q^{-3}\bf{p}^{10}}{(\log\bf p)^{9+2\beta^{-1}}}(\log R)^{17+6\beta^{-1}}R^{\beta(r-\frac{r}{q}-1)-1}.
        \end{equation*}
        Next, suppose that $4<r\leq 2+\frac{2}{\beta}$. We may assume $\beta<1$. Then, from the inequalities
        \begin{equation*}
            D_{r_\beta,q_\beta}^\K(R;\beta)\leq \frac{e^{5+2\beta^{-1}}}{90\cdot 2^{3\beta^{-1}}}\frac{\bf q^{-3}\bf{p}^{10}}{(\log\bf p)^{9+2\beta^{-1}}}(\log R)^{17+6\beta^{-1}}R^{\beta(\frac{r_\beta}{2}-\frac{r_\beta}{q_\beta})},
        \end{equation*}
        \begin{equation*}
            D_{4,4}^\K(R;\beta)\leq 2R^\beta
        \end{equation*}
        we interpolate to obtain
        \begin{equation*}
            D_{r,q_r}^\K(R;\beta)\leq \frac{e^{5+2\beta^{-1}}}{90\cdot 2^{3\beta^{-1}}}\frac{\bf q^{-3}\bf{p}^{10}}{(\log\bf p)^{9+2\beta^{-1}}}(\log R)^{17+6\beta^{-1}}R^{\beta(\frac{r}{2}-\frac{r}{q_r})}.
        \end{equation*}
        By H\"older, we obtain the desired
        \begin{equation*}
            D_{r,q}^\K(R;\beta)\leq \frac{e^{5+2\beta^{-1}}}{90\cdot 2^{3\beta^{-1}}}\frac{\bf q^{-3}\bf{p}^{10}}{(\log\bf p)^{9+2\beta^{-1}}}(\log R)^{17+6\beta^{-1}}R^{\beta(\frac{r}{2}-\frac{r}{q})}.
        \end{equation*}
        It remains to consider the regime $3\leq r\leq 4$. Then, from the estimates
        \begin{equation*}
            D_{4,4}^\K(R;\beta)\leq 2R^\beta,
        \end{equation*}
        \begin{equation*}
            D_{2,2}^\K(R;\beta)=1,
        \end{equation*}
        we obtain the estimate
        \begin{equation*}
            D_{3,3}^\K(R;\beta)\leq \sqrt{2}R^{\frac{\beta}{2}},
        \end{equation*}
        whence
        \begin{equation*}
            D_{3,\infty}^\K(R;\beta)\leq \sqrt{2}R^{\frac{3\beta}{2}}.
        \end{equation*}
        By interpolation and H\"older, we conclude
        \begin{equation*}
            D_{r,q}^\K(R;\beta)\leq 2R^{\beta(\frac{r}{2}-\frac{r}{q})}.
        \end{equation*}

        In all, we conclude the estimate
        \begin{equation*}
            D_{r,q}^\K(R;\beta)\leq \frac{e^{5+2\beta^{-1}}}{90\cdot 2^{3\beta^{-1}}}\frac{\bf q^{-3}\bf{p}^{10}}{(\log\bf p)^{9+2\beta^{-1}}}(\log R)^{17+6\beta^{-1}}\max\Big(R^{\beta(r-\frac{r}{q}-1)-1},R^{\beta(\frac{r}{2}-\frac{r}{q})}\Big).
        \end{equation*}
        The result follows.

        \end{proof}

        To close the argument, it remains only to prove Lemma \ref{param_dec_lem}.
        
        \begin{proof}[Proof of Lemma \ref{param_dec_lem}]
    
            The proof is by casework on $r$ and $k$. Suppose $R_k\geq R^{1-\beta}$. By a straightforward calculation (see Case 2 of the proof of Theorem 5 in \cite{GM1}),
            \begin{equation*}
                (\#\mathcal{T}_{k_*}(\mathfrak{s}^{k_*}))^{r-4}\left(\mathfrak{s}^{k_*}\max_{\substack{\gamma_{k_*}\prec\tau_{k_*}\\\tau_{k_*}\in\mathcal{T}_{k_*}(\mathfrak{s}^{k_*})}}\#(\gamma\prec\gamma_{k_*})\right)^{\frac{r}{2}-1}\leq R^{\beta(r-\frac{r}{q}-1)-1}\big(\mathfrak{s}^{k_*}\times\#\mathcal{T}_{k_*}(\mathfrak{s}^{k_*})\big)^{\frac{r}{q}-1}.
            \end{equation*}
    		Consequently,
    		\begin{equation}
    			\begin{split}
    				\mathfrak{a}^r&\mu\big(\big\{x:|\lambda g^{(\lambda,\mathfrak{t})}(x)|>\mathfrak{a}\big\}\big)\\
    				&\leq C2^{\frac{r}{2}-3}e^{2r-4}(N+1)N^{r+E_2-4}(\log R)R^{\beta(r-\frac{r}{q}-1)-1}\big(\#\mathcal{T}_{k_*}(\mathfrak{s}^{k_*})\big)^{\frac{r}{q}}(\mathfrak{s}^{k_*})^{\frac{r}{q}}\lambda^r\mathfrak{t}R^{1+\beta}\\
    				&\leq C2^{\frac{r}{2}-3}e^{4r-3}(N+1)N^{r+E_2-4}(\log R)R^{\beta(r-\frac{r}{q}-1)-1}\Big(\sum_\gamma\|\lambda g_\gamma^{(\lambda,\mathfrak{t})}\|_r^q\Big)^{\frac{r}{q}}.
    			\end{split}
    		\end{equation}
            Suppose instead that $R_k\leq R^{1-\beta}$, and 
            $2+\frac{2}{\beta}\leq r\leq 6$. Then, from the inequality
            \begin{equation*}
                1\leq R^{\beta(\frac{r}{2}-1)}R^{-1},
            \end{equation*}
            it follows that for any $\gamma_k$ we have
            \begin{equation*}
                [\#(\gamma\prec\gamma_k)]^{\frac{r}{2}-1}\leq R^{-1}R^{\beta(\frac{r}{2}-1)}R_k^{3-\frac{r}{2}}.
            \end{equation*}
            Rearranging, we have
            \begin{equation*}
                R^{\beta(r-\frac{r}{q}-3)}[\#(\gamma\prec\gamma_k)]^{\frac{r}{2}-1}\leq R^{\beta(r-\frac{r}{q}-1)-1}\Big(R^{-\beta}R_k\Big)^{3-\frac{r}{2}},
            \end{equation*}
            which implies (using $\frac{1}{q}+\frac{3}{r}\leq 1$)
            \begin{equation*}
                (\mathfrak{s}^{k_*}\times\#\mathcal{T}_{k_*}(\mathfrak{s}^{k_*}))^{r-3}[\#(\gamma\prec\gamma_k)]^{\frac{r}{2}-1}\leq R^{\beta(r-\frac{r}{q}-1)-1}\Big(R^{-\beta}R_k\Big)^{3-\frac{r}{2}}(\#\gamma)^{\frac{r}{q}}.
            \end{equation*}
            Note that, for each $\tau_k\in\cal{T}_{k_*}(\frak{s}^{k_*})$,
            \begin{equation*}
                (\#\cal{T}_{k_*}(\frak{s}^{k_*}))^{r-4}(\#\gamma\prec\tau_k)^{\frac{r}{2}-1}\leq(\frak{s}^{k_*})^{3-\frac{r}{2}}(\mathfrak{s}^{k_*}\times\#\mathcal{T}_{k_*}(\mathfrak{s}^{k_*}))^{r-4},
            \end{equation*}
            so that
            \begin{equation*}
                \mathfrak{s}^{k_*}(\#\cal{T}_{k_*}(\frak{s}^{k_*}))^{r-3}(\#\gamma\prec\tau_k)^{\frac{r}{2}-1}[\#(\gamma\prec\gamma_k)]^{\frac{r}{2}-1}\leq R^{\beta(r-\frac{r}{q}-1)-1}(\#\gamma)^{\frac{r}{q}}.
            \end{equation*}
            Thus, we achieve the estimate
            \begin{equation*}
                \begin{split}
                    \mathfrak{a}^r\mu\big(\{x:|\lambda g^{(\lambda,\mathfrak{t})}(x)|>\mathfrak{a}\}\big)&\leq C2^{\frac{r}{2}-3}e^{2r-4}(N+1)N^{r+E_2-4}(\log R)R^{\beta(r-\frac{r}{q}-1)-1}(\#\gamma)^{\frac{r}{q}}\lambda^r\mathfrak{t}\cdot R^{1+\beta}\\
                    &\leq C2^{\frac{r}{2}-3}e^{4r-3}(N+1)N^{r+E_2-4}(\log R)R^{\beta(r-\frac{r}{q}-1)-1}\Big(\sum_\gamma\|\lambda g_\gamma^{(\lambda,\frak{t})}\|_r^q\Big)^{\frac{r}{q}}.
                \end{split}
            \end{equation*}
    
            It remains to consider the case $R_k\leq R^{1-\beta}$, $4\leq r\leq 2+\frac{2}{\beta}$. In this case, from the inequalities
            \begin{equation*}
                R_k^{\frac{r}{2}-3}\leq 1,\quad \frak{s}^{k_*}\leq RR_k^{-1},\quad\#\cal{T}_{k_*}(\frak{s}^{k_*})\leq R_k,
            \end{equation*}
            we conclude the inequality
            \begin{equation*}
                (\frak{s}^{k_*})^{\frac{r}{2}-\frac{r}{q}}(\#\cal{T}_{k_*}(\frak{s}^{k_*}))^{r-\frac{r}{q}-3}\leq R^{\beta(\frac{r}{2}-\frac{r}{q})}.
            \end{equation*}
            Thus,
            \begin{equation*}
                (\frak{s}^{k_*})^{\frac{r}{2}}(\#\cal{T}_{k_*}(\frak{s}^{k_*}))^{r-3}\leq R^{\beta(\frac{r}{2}-\frac{r}{q})}(\#\gamma)^{\frac{r}{q}},
            \end{equation*}
            and we conclude that
            \begin{equation*}
                \begin{split}
                    \mathfrak{a}^r\mu\big(\{x:|\lambda g^{(\lambda,\mathfrak{t})}(x)|>\mathfrak{a}\}\big)&\leq C2^{\frac{r}{2}-3}e^{2r-4}(N+1)N^{r+E_2-4}(\log R)R^{\beta(\frac{r}{2}-\frac{r}{q})}(\#\gamma)^{\frac{r}{q}}\lambda^r\mathfrak{t}\cdot R^{1+\beta}\\
                    &\leq C2^{\frac{r}{2}-3}e^{4r-3}(N+1)N^{r+E_2-4}(\log R)R^{\beta(\frac{r}{2}-\frac{r}{q})}\Big(\sum_\gamma\|\lambda g_\gamma^{(\lambda,\frak{t})}\|_r^q\Big)^{\frac{r}{q}}.
                \end{split}
            \end{equation*}
                
        \end{proof}

    \section{Transference and real integrals}\label{section:transfer}

    In this section, we supply the tools needed to convert our main results (Theorems \ref{smallcap} and \ref{squarefunction}) into the applications described in subsection \ref{subsec:application}. The basic procedure was already described in our previous work \cite{johnsrude2025sparse}; we give a simplified account here to be self-contained.

    We begin by sketching a proof of Theorem \ref{thm:sparse_mv}. It will contain many of the details needed to explain the other two applications.

    \begin{proof}[Proof sketch of Theorem \ref{thm:sparse_mv}]

        We will first handle the case $\gamma=0$, and then explain the needed modifications for the general $\gamma$ case.
        
        Let $N\in\N$ and $\sigma\in[0,1]$ be such that $N,N^\sigma\in p^\Z$ with $p$ an odd prime. Fix $\K=\Q_p$. Consider the domain $\frak D\subseteq\Q_p^2$ defined by
        \begin{equation*}
            \frak D=\O\times B(0,N^{\sigma-1}).
        \end{equation*}
        Thus, $\frak D$ consists of those $(x,y)\in\Q_p^2$ with $|x|\leq 1$, $|y|\leq N^{\sigma-1}$. By appealing to Theorem \ref{smallcap}, we may show
        \begin{equation}\label{ineq:nonarch_mv}
            \fint_{\frak D}\Big|\sum_{n=0}^{N-1}a_ne_{\Q_p}\Big(\frac{n}{N}x+\frac{n^2}{N^2}t\Big)\Big|^rdxdt\lesssim_\bf p(\log N)^{O(1)}\big(N^{(r-\frac{r}{q}-1)-1-\sigma}+N^{\frac{r}{2}-\frac{r}{q}}\big)\left(\sum_{n=0}^{N-1}|a_n|^q\right)^{r/q}.
        \end{equation}
        Here we have written $e=e_{\Q_p}$ for the function specified in \eqref{id:padic_char}. The bound \eqref{ineq:nonarch_mv} follows from standard changes-of-variable and periodicity, together with an application of small cap decoupling \ref{smallcap}.
    
        Next, note that
        \begin{equation*}
            f(x,t)=\sum_{n=0}^{N-1}a_ne_{\Q_p}\Big(\frac{n}{N}x+\frac{n^2}{N^2}t\Big)
        \end{equation*}
        has the local constancy property $f(x+h,t+k)=f(x,t)$ whenever $|h|\leq N^{-1}, |t|\leq N^{-2}$. In particular, our integral may be written as the following Riemann sum:
        \begin{equation*}
            \begin{split}
                \fint_{\frak D}|f(x,t)|^rdxdt&=N^{-2-\sigma}\sum_{a=0}^{N-1}\sum_{b=0}^{N^{1+\sigma}-1}|f(a,N^{1-\sigma}b)|^r\\
                &=N^{-2-\sigma}\sum_{a=0}^{N-1}\sum_{b=0}^{N^{1+\sigma}-1}\Bigg|\sum_{n=0}^{N-1}a_ne_{\Q_p}\Big(\frac{na}{N}+\frac{n^2b}{N^{1+\sigma}}\Big)\Bigg|^r.
            \end{split}
        \end{equation*}
        Each of the arguments in the latter $e_{\Q_p}$ is a rational number whose denominator is a power of $p$. Thus, after inspecting \eqref{id:padic_char}, we may freely substitute $e_{\Q_p}$ for $e_\R$:
        \begin{equation*}
            \fint_{\frak D}|f(x,t)|^rdxdt=N^{-2-\sigma}\sum_{a=0}^{N-1}\sum_{b=0}^{N^{1+\sigma}-1}\Bigg|\sum_{n=0}^{N-1}a_ne_{\R}\Big(\frac{na}{N}+\frac{n^2b}{N^{1+\sigma}}\Big)\Bigg|^r.
        \end{equation*}
        Here of course $e_\R$ is the usual Euclidean phase $e_\R(x)=\exp(2\pi ix)$. In particular, we may write
        \begin{equation*}
            \fint_{\frak D}|f(x,t)|^rdxdt=\int\Bigg|\sum_{n=0}^{N-1}a_ne_{\R}\Big(nx+n^2t\Big)\Bigg|^rd\Delta(x,t),
        \end{equation*}
        where we have written $\Delta$ for the measure on $\R^2$ defined by
        \begin{equation*}
            \Delta=N^{-2-\sigma}\sum_{a=0}^{N-1}\sum_{b=0}^{N^{1+\sigma}-1}\delta_{\frac{a}{N}}\otimes\delta_{\frac{b}{N^{1+\sigma}}}.
        \end{equation*}
        Here $\delta_c$ is the Dirac delta mass located at $c\in\R$. Since translations $(x,t)\mapsto(x+\frak{x},t+\frak{t})$ amount to replacing the coefficients $a_n$ with unimodular multiples $\tilde{a}_n$, $|a_n|=|\tilde{a}_n|$, we may write
        \begin{equation*}
            \fint_{A(N,\sigma;0)}\Bigg|\sum_{n=1}^Na_ne\Big(nx+n^2t\Big)\Bigg|^rdxdt\leq\max_{\substack{\{\tilde a_n\}_n\\|\tilde a_n|=|a_n|}}\int\Bigg|\sum_{n=0}^{N-1}\tilde a_ne\Big(nx+n^2t\Big)\Bigg|^rd\Delta(x,t).
        \end{equation*}
        In particular, we obtain the estimate
        \begin{equation*}
            \fint_{A(N,\sigma;0)}\Bigg|\sum_{n=1}^Na_ne\Big(nx+n^2t\Big)\Bigg|^rdxdt\lesssim_{\bf p}(\log N)^{O(1)}\big(N^{(r-\frac{r}{q}-1)-1-\sigma}+N^{\frac{r}{2}-\frac{r}{q}}\big)\left(\sum_{n=0}^{N-1}|a_n|^q\right)^{r/q}.
        \end{equation*}

        We have justified Theorem \ref{thm:sparse_mv} when $\gamma=0$. We now discuss the intermediate case $\gamma\in(0,1)$; recall that the $\gamma=1$ case is the usual ``short mean value estimate'' \`a la Theorem \ref{thm:short_mv}, and the logarithmic factor for (real) small cap decoupling was obtained in \cite{johnsrude2023small}. The basic point is to consider exponential sums of the form
        \begin{equation*}
            f(x,t,x',t')=\sum_{n=1}^Na_ne_{\Q_p}(nx+n^2t)e_{\R}\Big(\frac{n}{N}x'+\frac{n^2}{N^2}t'\Big),
        \end{equation*}
        where $x,t\in\Q_p$ and $x',t'\in\R$. Thus, $f$ is a complex-valued function defined on the LCA group $\Q_p\times\R$; the general theory of Pontryagin duality implies that the tools of Fourier analysis on the $\Q_p$ and $\R$ factors may be freely lifted to the product.

        We supply several of the technical steps. Let
        \begin{equation*}
            g(x,t,x',t')=\sum_{n=1}^Na_n1_{N^{-1}\Z_p^2}(x,t)\eta_{B_N}(x',t')e_{\Q_p}(nx+n^2t)e_{\R}\Big(\frac{n}{N}x'+\frac{n^2}{N^2}t'\Big),
        \end{equation*}
        where $\eta_{B_N}$ is a nonnegative smooth function satisfying $\eta_{B_N}\gtrsim 1$ on $B_N\subseteq\R^2$ and $\hat\eta_{B_N}$ is supported in $B_{N^{-1}}$. Let $g_n$ be the $n$th summand in the definition of $g$. By using $p$-adic decoupling, Minkowski, Fubini, and real decoupling in turn,
        \begin{equation*}
            \begin{split}
                \int_{\R^2}&\int_{\Q_p^2}|g(x,t,x',t')|^rdxdtdx'dt'\\
                &\leq p^7B(N^{1-\gamma}) \int_{\R^2}\left(\sum_{\tau}\left(\int_{\Q_p^2}\Bigg|\sum_{n\in\tau}g_n(x,t,x',t')\Bigg|^rdxdt\right)^{q/r}\right)^{r/q}dx'dt'\\
                &\leq p^7B(N^{1-\gamma})\left(\sum_\tau\left(\int_{\Q_p^2}\int_{\R^2}\Bigg|\sum_{n\in\tau}g_n(x,t,x',t')\Bigg|^rdx'dt'dxdt\right)^{q/r}\right)^{r/q}\\
                &\leq p^7B(N^{1-\gamma})B(N^\gamma)\left(\sum_\tau\sum_{\tau'}\left(\int_{\Q_p^2}\int_{\R^2}\Bigg|\sum_{n\in\tau\cap\tau'}g_n(x,t,x',t')\Bigg|^rdx'dt'dxdt\right)^{q/r}\right)^{r/q}.
            \end{split}
        \end{equation*}
        Here we have the notation
        \begin{equation*}
            B(M)=C(\log M)^{C'}\big(M^{(r-\frac{r}{q}-1)-\sigma-1}+M^{\frac{r}{2}-\frac{r}{q}}\big),
        \end{equation*}
        with $C,C'\gg 1$ constants arising from the decoupling theorem \ref{smallcap} and the real analogue in \cite{johnsrude2023small}. The sets $\tau$ range over $\cal P(\O,N^{\gamma-1})$ and the sets $\tau'$ range over the partition of $[0,N]$ into intervals of length $N^{1-\gamma}$. It follows that the intersections $\tau\cap\tau'$ are singletons, so
        \begin{equation*}
            \int_{\R^2}\int_{\Q_p^2}|g(x,t,x',t')|^rdxdtdx'dt'\leq p^7 B(N)N^{4}\left(\sum_{n=1}^N|a_n|^q\right)^{r/q}.
        \end{equation*}
        Applying the same rescaling, transference, and translation principles as before, we obtain the estimate
        \begin{equation*}
            \fint_{A(N,\sigma;\gamma)}\Bigg|\sum_{n=1}^Ne(nx+n^2t)\Bigg|^rdxdt\lesssim p^7(\log N)^{O(1)}\big(N^{(r-\frac{r}{q}-1)-1-\sigma}+N^{\frac{r}{2}-\frac{r}{q}}\big)\left(\sum_{n=1}^N|a_n|^q\right)^{r/q},
        \end{equation*}
        as desired.
        
    \end{proof}

    We sketch the argument for Theorem \ref{thm:gaussian_mv} next, which is similar in spirit to the previous result.

    \begin{proof}[Proof of Theorem \ref{thm:gaussian_mv}]
        We work over $\K=\Q_3\big[\sqrt{-1}\big]$. A uniformizer $\omega$ for $\K$ is $3$; here, we have used that $p\equiv 3$ mod $4$. Our choice of basic character is
        \begin{equation*}
            e_{\K}(x)=e_{\Q_3}\big(\mathrm{Tr}_{\K/\Q_3}(x)\big).
        \end{equation*}
        This $e_{\K}$ satisfies the necessary condition $e_{\K}(\O)=\{1\}\neq e_\K(3^{-1}\O)$. We also have
        \begin{equation*}
            e_{\K}\big(a+b\sqrt{-1}\big)=e_{\Q_3}(a),\quad a,b\in\Q_3.
        \end{equation*}
        We study the same mean-value estimate considered in the proof of Theorem \ref{thm:sparse_mv}, framed over $\K$ instead of $\Q_p$. We have
        \begin{equation*}
            \begin{split}
                \fint_{\O\times B(0,N^{\sigma-1})}&\Bigg|\sum_{n,m=1}^Na_{n,m}e_{\K}\Big(\frac{n+m\sqrt{-1}}{N}x+\frac{(n+m\sqrt{-1})^2}{N^2}t\Big)\Bigg|^r\\
                &\lesssim(\log N)^{O(1)}\big(N^{(r-\frac{r}{q}-1)-1-\sigma}+N^{\frac{r}{2}-\frac{r}{q}}\big)\left(\sum_{n,m=1}^N|a_{n,m}|^q\right)^{r/q}
            \end{split}
        \end{equation*}
        As in the previous transference argument, we may reduce the integral to a Riemann sum. Here, we select $x$ to run over $a+b\sqrt{-1}$, $0\leq a,b\leq N-1$, and $t$ to run over $c+d\sqrt{-1}$, $0\leq c,d\leq N^{1+\sigma}-1$. Using the choice of $e_\K$, we see that
        \begin{equation*}
            e_{\K}\Big(\frac{n+m\sqrt{-1}}{N}x+\frac{(n+m\sqrt{-1})^2}{N^2}t\Big)=e_\R\Big(\frac{na}{N}-\frac{mb}{N}+\frac{(n^2-m^2)c}{N^2}-\frac{2nmd}{N^2}\Big).
        \end{equation*}
        The remainder of the transference goes through without substantial modification.
    \end{proof}

    We end by giving a proof of Theorem \ref{thm:dist}.

    \begin{proof}[Proof of Theorem \ref{thm:dist}]
        Define the function
        \begin{equation*}
            g(x,t)=1_{N^{-1-\sigma}\Z_p^2}(x,t)\sum_{j\in S}a_je_{\Q_p}\Big(jx+j^2t\Big).
        \end{equation*}
        Here $\Q_p$ is determined by the choice of $p$ in the statement of the theorem. Thus, $g$ is a Schwartz--Bruhat function on $\Q_p^2$.

        By Theorem \ref{squarefunction}, we have
        \begin{equation*}
            \mu(\{x\in\Q_p^2:|g(x)|>\alpha\})\leq\frac{1}{50} p^7(\log_pN)^{10}\sum_{\substack{k\in p^\Z\\N^{-\frac{1+\sigma}{2}}\leq p^{-k}\leq 1}}\sum_{\substack{\tau\\\mathrm{diam}(\tau)=p^{-k}}}\sum_{U\in\cal G_\tau[g;\alpha]}\mu(U)\left(\fint_U\sum_{\theta\prec\tau}|g_\theta|^2\right)^2,
        \end{equation*}
        where the $\theta$ are the caps of diameter $N^{-\frac{1+\sigma}{2}}$. By Proposition \ref{partial}, for each such $k$ and $\tau$ we have
        \begin{equation*}
            \sum_{U\in\cal G_\tau[g;\alpha]}\mu(U)\left(\fint_U\sum_{\theta\prec\tau}|g_\theta|^2\right)^2\leq\max_{\gamma_k\prec\tau}\#(S\cap\gamma_k)\#(S\cap\tau)\sum_{j\in S\cap\tau} N^{2+2\sigma}|a_j|^4,
        \end{equation*}
        where $\gamma_k$ are caps of diameter $p^kN^{-1-\sigma}$. Summing over $\tau$, we obtain
        \begin{equation}\label{ineq:padic_reg}
            \mu(\{x\in\Q_p^2:|g(x)|>\alpha\})\leq\frac{1}{100}p^7(\log_pN)^{11}\alpha^{-4}N^{2+2\sigma}\max_k\Big(\lambda_k(S)\cdot\lambda_{(1+\sigma)\log_pN-k}(S)\Big)\sum_{j\in S}|a_j|^4.
        \end{equation}
        It remains to relate $\mu(\{x\in\Q_p^2:|g(x)|>\alpha\})$ to $\mathcal{L}^2\big(U_\alpha\cap A(N,\sigma;0)\big)$. If $0\leq a\leq N-1$ and $0\leq b\leq N^{1+\sigma}-1$ are such that $(\frac{a}{N},\frac{b}{N^{1+\sigma}})\in\{x\in\Q_p^2:|g(x)|>\alpha\}$, then
        \begin{equation*}
            \left|\sum_{j\in S}a_je_\R\Big(j\frac{a}{N}+j^2\frac{b}{N^{1+\sigma}}\Big)\right|>\alpha.
        \end{equation*}
        Let
        \begin{equation*}
            \cal A=\Big\{0\leq a\leq N-1,0\leq b\leq N^{1+\sigma}-1:\big(\frac{a}{N},\frac{b}{N^{1+\sigma}}\big)\in\{x\in\Q_p^2:|g(x)|>\alpha\}\Big\}.
        \end{equation*}
        Then, by local constancy and periodicity, we find that
        \begin{equation*}
            \mu(\{x\in\Q_p^2:|g(x)|>\alpha\})= N^\sigma\#\cal A.
        \end{equation*}

        To get back to $U_\alpha$, we will need to study random perturbations of our main estimate. For $(\frak x,\frak t)\in[0,N^{-1}]\times[0,N^{-2}]$, define $a_j^{\frak x,\frak t}$ by
        \begin{equation*}
            a_j^{\frak x,\frak t}=a_je_\R(-j\frak x-j^2\frak t).
        \end{equation*}
        Define $g^{\frak x,\frak t}$ by
        \begin{equation*}
            g^{\frak x,\frak t}(x,t)=\sum_{j\in S} a_j^{\frak x,\frak t}e_{\Q_p}\Big(jx+j^2t\Big),
        \end{equation*}
        and correspondingly set
        \begin{equation*}
            \mathcal{A}^{\frak x,\frak t}=\Big\{0\leq a\leq N-1,0\leq b\leq N^{1+\sigma}-1:\big(\frac{a}{N},\frac{b}{N^{1+\sigma}}\big)\in\{x\in\Q_p^2:|g^{\frak x,\frak t}(x)|>\alpha\}\Big\}.
        \end{equation*}
        Note that $|a_j^{\frak x,\frak t}|=|a_j|$ for all $\frak x,\frak t$. In particular, \eqref{ineq:padic_reg} holds for all $\frak x,\frak t$ with the same upper bound, ranging over the distinct $g^{\frak x,\frak t}$.

        We integrate:
        \begin{equation*}
            \begin{split}
            \mathcal{L}^2\big(U_\alpha\cap A(N,\sigma;\gamma)\big)&=\int_{\substack{\frak x\in[0,N^{-1}]\\\frak t\in[0,N^{-2}]}}\sum_{a=0}^{N-1}\sum_{b=0}^{N^{1+\sigma}-1}1_{U_\alpha}\Big(\frac{a}{N}+\frak x,\frac{b}{N^{1+\sigma}}+\frak t\Big)\\
            &=\int_{\substack{\frak x\in[0,N^{-1}]\\\frak t\in[0,N^{-2}]}}\#\mathcal{A}^{\frak x,\frak t}\\
            &=\int_{\substack{\frak x\in[0,N^{-1}]\\\frak t\in[0,N^{-2}]}}N^{-\sigma}\mu(\{x\in\Q_p^2:|g^{\frak x,\frak t}(x)|>\alpha\}).
            \end{split}
        \end{equation*}
        Applying our uniform upper bound, we conclude.
    \end{proof}

	\bibliography{main}
    \bibliographystyle{plain}

\end{document}